\newcommand{\term}{\emph}
\newcommand{\field}[1]{\mathbb{#1}}
\newcommand{\N}{\mathbb{N}}
\newcommand{\R}{\field{R}}
\newcommand{\extR}{\overline \R}
\newcommand{\norm}[1]{\|#1\|}
\newcommand{\abs}[1]{|#1|}
\newcommand{\inv}[1]{#1^{-1}}
\newcommand{\grad}{\nabla}
\newcommand{\freevar}{\,\boldsymbol\cdot\,}
\newcommand{\Union}\bigcup
\newcommand{\Isect}\bigcap
\newcommand{\union}\cup
\newcommand{\isect}\cap
\newcommand{\bigunion}\bigcup
\newcommand{\bigisect}\bigcap
\newcommand{\defeq}{:=}
\newcommand{\downto}{\searrow}
\newcommand{\upto}{\nearrow}
\newcommand{\subdiff}{\partial}
\DeclareMathOperator*{\argmin}{arg\,min}
\DeclareMathOperator{\interior}{int}
\DeclareMathOperator{\Dom}{dom}
\DeclareMathOperator{\epi}{epi}
\DeclareMathOperator{\diag}{diag}
\DeclareMathOperator{\Sym}{Sym}
\DeclareMathOperator{\trace}{trace}
\def \uminusSym{\setbox0=\hbox{$\cup$}\rlap{\hbox 
        to\wd0{\hss\raise0.5ex\hbox{$\scriptscriptstyle{-}$}\hss}}\box0}
\newcommand{\iprod}[2]{\langle #1,#2\rangle}
\def\llangle{\langle\kern-3pt\langle}
\def\rrangle{\rangle\kern-3pt\rangle}
\def \weaktostarSym{\setbox0=\hbox{$\rightharpoonup$}\rlap{\hbox 
        to\wd0{\hss\raise1ex\hbox{$\scriptscriptstyle{*\,}$}\hss}}\box0}
\def\linear{\mathbb{L}}
\newcommand{\setto}{\rightrightarrows}
\def\extR{\overline \R}
\def\opt#1{\bar #1}
\def\this#1{#1^k}
\def\nexxt#1{#1^{k+1}}
\def\optu{{\opt{u}}}
\def\optx{{\opt{x}}}
\def\opty{{\opt{y}}}
\def\nextu{\nexxt{u}}
\def\nextx{\nexxt{x}}
\def\thisu{\this{u}}
\def\thisx{\this{x}}
\def\thisv{\this{v}}
\def\invstar#1{{#1}^{-1,*}}
\newcommand{\Step}{T}
\def\thisw{\this{w}}
\def\nextw{\nexxt{w}}
\DeclareMathOperator{\prox}{prox}
\def\d{\,d}
\def\dualprod#1#2{\langle #1|#2\rangle}
\let\phi=\varphi
\let\epsilon=\varepsilon
\def\Id{\operatorname{Id}}
\def\Inj{\operatorname{In}}
\def\wop{B}
\def\linwop{B_x}
\def\constwop{B_{\mathop{\mathrm{const}}}}
\def\xop{\bar\grad_x B}
\def\uop{B_u}
\def\xuop{B_{xu}}
\def\wrhs{L}
\def\SupNorm{\mathscr{S}}
\def\RealXStep{\delta}
\def\outerprod{:}
\renewrobustcmd{\downto}{{{\mathchoice%
            {\rotatebox[origin=c]{-20}{$\to$}}% display
            {\rotatebox[origin=c]{-20}{$\to$}}% text
            {\rotatebox[origin=c]{-20}{\scalebox{0.75}{$\to$}}}% subscript
            {\rotatebox[origin=c]{-20}{\scalebox{0.6}{$\to$}}}% subsubscript
}}}
\renewrobustcmd{\upto}{{{\mathchoice%
            {\rotatebox[origin=c]{20}{$\to$}}% display
            {\rotatebox[origin=c]{20}{$\to$}}% text
            {\rotatebox[origin=c]{20}{\scalebox{0.75}{$\to$}}}% subscript
            {\rotatebox[origin=c]{20}{\scalebox{0.6}{$\to$}}}% subsubscript
}}}
\def\iprodx#1{\left\langle #1\right\rangle}
\def\optw{\opt w}
\def\nxt#1{{#1}^{k+1}}
\def\nxxt#1{\bar{#1}^{k+1}}
\pgfplotsset{
    compat=1.3,
    every axis label/.append style = {font = \scriptsize},
    every tick label/.append style = {font = \scriptsize},
    xlabel near ticks,
    ylabel near ticks,
    halfpage style/.style={
        width=.475\linewidth,
        height=.475\linewidth,
    },
    /pgfplots/ylabel atop ticks/.style={
        /pgfplots/every axis y label/.style={
            at={(ticklabel cs:1)},
            anchor=south west,
        },
        ylabel shift = -1ex,
    },
    convergence style/.style={
        scale only axis,
        ylabel atop ticks,
        xticklabel style={overlay},
        trim axis right,
        trim axis left,
        outer sep = 0pt,
        scaled x ticks=false,
        xminorticks=true,
        minor x tick num=1,
        yminorticks=true,
        minor y tick num=3,
        grid=both,
        major grid style={dotted,Greys-H},
        minor grid style={dotted,Greys-E},
        axis x line*=bottom,
        axis y line*=left,
        outer sep=0pt,
        every axis plot post/.style={
            line width=0.7pt
        },
        cycle list/Dark2,
        cycle multiindex* list={
            mark list*\nextlist
            Dark2\nextlist
        },
        mark repeat={20},
        mark size = 1pt,
        line width = 0.7pt,
    },
    legend ne style/.style={
        legend style={
            legend pos=north east,
            inner sep=0pt,
            outer sep=0pt,
            legend cell align=left,
            align=left,
            draw=none,
            fill=none,
            font=\scriptsize
        },
    },
    legend se style/.style={
        legend style={
            legend pos=south east,
            inner sep=0pt,
            outer sep=0pt,
            legend cell align=left,
            align=left,
            draw=none,
            fill=none,
            font=\scriptsize
        },
    },
    parameter style/.style={
        trim axis right,
        trim axis left,
        enlargelimits=false,
        axis on top,
        axis equal image,
        scale only axis,
        height=0.27\linewidth
    },
    colorbar style/.style={
        trim axis left,
        trim axis right,
        enlargelimits=false,
        axis on top,
        axis equal image,
        xtick style={draw=none},
        ytick style={draw=none},
        xmajorticks=false,
        ylabel near ticks,
        yticklabel pos=right,
        scale only axis,
        height=0.27\linewidth,
        anchor = west,
        xshift = 2ex,
    }
}
\newcommand{\findminmax}[4]{
    \pgfplotstableread{#1}{#4}
    \def\tempa{#3}
    \def\tempb{true}
    \ifx\tempa\tempb
        \pgfplotstablegetelem{0}{#2}\of{#4}%
        \pgfmathparse{\pgfplotsretval}
        \global\let\ExperimentMaxValue\pgfmathresult
        \pgfplotstablegetelem{0}{#2}\of{#4}%
        \pgfmathparse{\pgfplotsretval}
        \global\let\ExperimentMinValue\pgfmathresult
    \fi
    \pgfplotstablegetrowsof#4
    \pgfmathparse{\pgfplotsretval-1}
    \foreach \i in {0,...,\pgfmathresult}{
        \pgfplotstablegetelem{\i}{#2}\of{#4}%
        \pgfmathparse{max(\ExperimentMaxValue, \pgfplotsretval)}
        \global\let\ExperimentMaxValue\pgfmathresult
        \pgfplotstablegetelem{\i}{#2}\of{#4}%
        \pgfmathparse{min(\ExperimentMinValue, \pgfplotsretval)}
        \global\let\ExperimentMinValue\pgfmathresult
    }
}
\newcommand{\SetMinMax}[3]{%
    \findminmax{#1}{#2}{true}{#3}
}
\newcommand{\UpdMinMax}[3]{
    \findminmax{#1}{#2}{false}{#3}
}
\pgfplotsset{
    fixedylog/.code 2 args={
        \pgfmathsetmacro{\logscale}{#1}
        \pgfmathsetmacro{\nonlogmax}{1+\logscale}
        \pgfmathsetmacro{\logmax}{log10(\nonlogmax)}
        \gdef\scaledlog##1{log10(1+\logscale*((##1-\ExperimentMinValue)/(\ExperimentMaxValue-\ExperimentMinValue)))}
        \gdef\delogfracvalue##1{
            \pgfkeys{/pgf/number format/.cd,#2}
            \pgfmathparse{((10^(##1*\logmax)-1)/\logscale)*(\ExperimentMaxValue-\ExperimentMinValue)+\ExperimentMinValue}
            \pgfmathprintnumber{\pgfmathresult}
        }
        \pgfplotstableset{
            y filter/.expression={\scaledlog{y}},
            ytick={0, 0.25*\logmax, 0.50*\logmax, 0.75*\logmax, \logmax},
            yticklabels={
                \delogfracvalue{0},
                \delogfracvalue{0.25},
                \delogfracvalue{0.50},
                \delogfracvalue{0.75},
                \delogfracvalue{1.0},
            }
        }
    }
}
\theoremstyle{definition}
\newtheorem{assumption}[theorem]{Assumption}
\crefname{assumption}{Assumption}{Assumptions}
\theoremstyle{definition}
\newtheorem{experiment}{Experiment}
\crefname{experiment}{Experiment}{Experiments}
\numberwithin{algorithm}{section}
\title{A nonsmooth primal-dual method with interwoven PDE constraint solver}
\shorttitle{Primal-dual method with interwoven PDE constraint solver}
\date{2022-11-09 (revised 2024-05-23)}
\author{
    Bjørn Jensen\thanks{%
        Department of Mathematical Information Technology, University of Jyväskylä, Finland
        \email{bjorn.c.s.jensen@jyu.fi}
    }
    \and
    Tuomo Valkonen\thanks{%
        ModeMat, Escuela Politécnica Nacional, Quito, Ecuador
        \emph{and}
        Department of Mathematics and Statistics, University of Helsinki, Finland
        \email{tuomo.valkonen@iki.fi}
    }
}
\begin{document}

\maketitle

\begin{abstract}
    We introduce an efficient first-order primal-dual method for the solution of nonsmooth PDE-constrained optimization problems.
    We achieve this efficiency through \emph{not} solving the PDE or its linearisation on each iteration of the optimization method.
    Instead, we run the method interwoven with a simple conventional linear system solver (Jacobi, Gauss–Seidel, conjugate gradients), always taking only \emph{one step} of the linear system solver for each step of the optimization method. The control parameter is updated on each iteration as determined by the optimization method.
    We prove linear convergence under a second-order growth condition, and numerically demonstrate the performance on a variety of PDEs related to inverse problems involving boundary measurements.
\end{abstract}

\section{Introduction}
\label{sec:introduction}

Our objective is to develop efficient first-order algorithms for the solution of PDE-constrained optimization problems of the type
\[
    \min_{x,u} F(x) + Q(u) + G(Kx)
    \quad\text{subject to}\quad
    \wop (u, w; x)=\wrhs w \quad\text{for all}\quad w,
\]
where $K$ is a linear operator and the functions $F$, $G$, and $Q$ are convex but the first two possibly nonsmooth. The functionals $B$ and $L$ model a partial differential equation in weak form, parametrised by $x$; for example, $B(u, w; x)=\iprod{\grad u}{x\grad w}$.

Semismooth Newton methods \cite{mifflin1977semismooth,qi1993nonsmooth} are conventionally used for such problems when a suitable reformulation exists \cite{hintermuller2002primaldual,kunisch2008lagrange,ulbrich2002semismooth,ulbrich2011semismooth,hintermuller2006infeasible}.
Reformulations may not always be available, or yield effective algorithms.
The solution of large linear systems may also pose scaling challenges.
Therefore, first-order methods for PDE-constrained optimization have been proposed \cite{tuomov-pdex2nlpdhgm,tuomov-nlpdhgm-redo,tuomov-nlpdhgm-block,tuomov-nlpdhgm-general} based on the primal-dual proximal splitting (PDPS) of \cite{chambolle2011first}. The original version applies to convex problems of the form
\begin{equation}
    \label{eq:intro:prob-pdps}
    \min_x F(x) + G(Kx).
\end{equation}
The primal-dual expansion permits efficient treatment of $G \circ K$ for nonsmooth $G$. In \cite{tuomov-pdex2nlpdhgm,tuomov-nlpdhgm-redo,tuomov-nlpdhgm-block,tuomov-nlpdhgm-general} $K$ may be nonlinear, such as the solution operator of a nonlinear PDE.

However, first-order methods generally require a very large number of iterations to exhibit convergence. If the iterations are cheap, they can, nevertheless, achieve good performance. If the iterations are expensive, such as when a PDE needs to be solved on each step, their performance can be poor.
Therefore, especially in inverse problems research, Gauss–Newton -type approaches are common for \eqref{eq:intro:prob-pdps} with nonlinear $K$; see, e.g., \cite{darde2021electrodeless,vilhunen2002simultaneous,jauhiainen2019gaussnewton}. They are easy: first linearise $K$, then apply a convex optimization method or, in simplest cases, a linear system solver. Repeat. Even when a first-order method is used for the subproblem, Gauss–Newton methods can be significantly faster than full first-order methods \cite{jauhiainen2019gaussnewton} if they converge at all \cite{tuomov-nlpdhgm}.
This stems from the following and only practical difference between the PDPS for nonlinear $K$ and Gauss–Newton applied to \eqref{eq:intro:prob-pdps} with PDPS for the inner problems: the former re-linearizes and factors $K$ on \emph{each} PDPS iteration, the latter only on each outer Gauss–Newton iteration.

In this work, we avoid forming and factorizing the PDE solution operators altogether by \emph{running an iterative solver for the constantly adapting PDE simultaneously with the optimization method}.
This may be compared to the approach to bilevel optimization in \cite{suonpera2022bilevel}.
We concentrate on the simple Jacobi and Gauss–Seidel splitting methods for the PDE, while the optimization method is based on the PDPS, as we describe in \cref{sec:algorithm}.
We prove convergence in \cref{sec:convergence} using the testing approach introduced in \cite{tuomov-proxtest} and further elucidated in \cite{clasonvalkonen2020nonsmooth}.
We explain how standard splittings and PDEs fit into the framework in \cref{sec:examples}, and finish with numerical experiments in \cref{sec:numerics}.

Pseudo-time-stepping one-shot methods have been introduced in \cite{ta1991one} and further studied, among others, in \cite{sirignano2022online,kaland2013oneshot,hamdi2010properties,hamdi2009reduced,guenther2016simultaneous,bosse2014oneshot,griewank2006projected,hazra2004simultaneous}.
A “one-shot” approach, as opposed to an “all-at-once” approach, solves the PDE constraints on each step, instead of considering them part of a unified system of optimality conditions.
The aforementioned works solve these constraints inexactly through “pseudo-”time-stepping. This corresponds to the trivial split $A_x=(A_x-\Id)+\Id$ where $A_x$ is such that $\iprod{A_xu}{w}=\wop(u, w; x)$. We will, instead, apply Jacobi, Gauss–Seidel or even \mbox{(quasi-)}conjugate gradient splitting on $A_x$.
In \cite{griewank2006projected,bosse2014oneshot} Jacobi and Gauss–Seidel updates are used for the control variable, but not for the PDEs.
The authors of \cite{hazra2004simultaneous} come closest to introducing non-trivial splitting of the PDEs via Hessian approximation.
However, they and the other aforementioned works generally restrict themselves to smooth problems and employ gradient descent, Newton-type methods, or sequential quadratic programming (SQP) for the control variable $x$.
Our focus is on nonsmooth problems involving, in particular, total variation regularization $G(Kx)=\norm{\grad x}_1$.

\subsection*{Notation and basic results}

Let $X$ be a normed space. We write $\dualprod{\freevar}{\freevar}$ for the dual product and, in a Hilbert space, $\iprod{\freevar}{\freevar}$ for the inner product.
The order of the arguments in the dual product is not important when the action is obvious from context.
For $X$ a Hilbert space, we denote by $\Inj_X : X \hookrightarrow X^*$ the canonical injection, $\dualprod{\Inj_X x}{\tilde x}=\iprod{x}{\tilde x}$ for all $x, \tilde x \in X$.

We write $\linear(X; Y)$ for the space of bounded linear operators between $X$ and $Y$.
We write $\Id_X = \Id \in \linear(X; X)$ for the identity operator on $X$.
If $M \in \linear(X; X^*)$ is non-negative and self-adjoint, i.e., $\dualprod{Mx}{y}=\dualprod{x}{My}$ and $\dualprod{x}{Mx} \ge 0$ for all $x, y \in X$, we define $\norm{x}_M \defeq \sqrt{\dualprod{x}{Mx}}$. Then the \emph{three-point identity} holds:
\begin{equation}
    \label{eq:pythagoras}
    \dualprod{M(x-y)}{x-z} = \frac{1}{2}\norm{x-y}^2_M - \frac{1}{2}\norm{y-z}^2_M + \frac{1}{2}\norm{x-z}^2_M \qquad \text{for all } x,y,z\in X.
\end{equation}
We extensively use the vector Young's inequality
\begin{equation}
    \label{eq:prox}
    \dualprod{x}{y} \le \frac{1}{2a}\norm{x}_X^2 + \frac{a}{2}\norm{y}_{X^\ast}^2
    \quad (x \in X,\,y \in X^\ast,\, a>0).
\end{equation}
These expressions hold in Hilbert spaces also with the inner product in place of the dual product. We write $M^{\star}$ for the inner product adjoint of $M$, and $M^*$ for the dual product adjoint.

We write $\Dom F$ for the effective domain, and $F^*$ for the Fenchel conjugate of $F: X \to \extR \defeq [-\infty, \infty]$.
We write $F'(x) \in X^*$ for the Fréchet derivative at $x$ when it exists, and, if $X$ is a Hilbert space, $\grad F(x) \in X$ for its Riesz presentation.
For convex $F$ on a Hilbert space $X$, we write $\subdiff F(x) \subset X$ for the subdifferential at $x \in X$ (or, more precisely, the corresponding set of Riesz representations, but aside from a single proof in \cref{sec:oc}, we will not be needing subderivatives as elements of $X^*$).
We then define the proximal map
\[
    \prox_{F}(x) \defeq (\Id + \subdiff F)^{-1}(x) = \argmin_{\tilde x\in X}\left\{F(\tilde x) + \frac{1}{2}\norm{\tilde x - x}_{X}^2\right\},\quad x \in X.
\]
We denote the $\{0,\infty\}$-valued indicator function of a set $A$ by $\delta_A$.

We occasionally apply operations on $x \in X$ to all elements of sets $A \subset X$, writing $\dualprod{x+A}{z} \defeq \{\dualprod{x+a}{z} \mid a \in A \}$. For $B \subset \R$, we write $B \ge c$ if $b \ge c$ for all $b \in B$.

On a Lipschitz domain $\Omega \subset \R^n$, we write $\trace_{\partial \Omega} \in \linear(H^1(\Omega); L^2(\partial \Omega))$ for the trace operator on the boundary $\partial \Omega$.

\section{Problem and proposed algorithm}
\label{sec:algorithm}

We start by introducing in detail the type of problem we are trying to solve.
We then rewrite in \cref{sec:algorithm:problem} its optimality conditions in a form suitable for developing our proposed method in \cref{sec:algorithm:algorithm}.
Before this we recall the structure and derivation of the basic PDPS in \cref{sec:algorithm:pdps}.

\subsection{Problem description}
\label{sec:algorithm:problem}

Our objective is to solve
\begin{equation}
    \label{eq:algorithm:problem}
    \min_x J(x) \defeq F(x) + Q(S(x)) + G(K x),
\end{equation}
where $F: X \to \extR$, $G: Y \to \extR$, and $Q: U \to \R$ are convex, proper, and lower semicontinuous on Hilbert spaces $X$, $U$, and $Y$ with $Q$ Fréchet differentiable.
We assume $K \in \linear(X; Y)$ while $S: X \ni x \mapsto u \in U$ is a solution operator of the weak PDE
\begin{equation}
    \label{eq:pde-u}
    \wop(u, w; x) = \wrhs w \quad\text{for all}\quad w \in W.
\end{equation}
Here $L \in U^\ast $ and $B: U \times W \times X \to \R$ is continuous, and affine-linear-affine in its three arguments.
The space $W$ is Hilbert, possibly distinct from $U$ to model nonhomogeneous boundary conditions.
%\todo{There doesn't seem to be any need for $W$ and $U$ to be Hilbert; they could be Banach, with no changes to the proofs?}
For this initial development, we will tacitly assume unique $S(x)$ and $\grad S(x)$ to exist for all $x \in \Dom F$, but later on in the manuscript, do not directly impose this restriction, or use $S$.

\begin{example}[A linear PDE]
    \label{ex:algorithm:b-idea-linear}%
    On a Lipschitz domain $\Omega \subset \R^n$, consider the PDE
    \[
        \left\{
            \begin{array}{ll}
                \grad\cdot\grad u = x, & \text{on } \Omega,
                \\
                u = g, & \text{on } \partial \Omega.
            \end{array}
        \right.
    \]
    For the weak form \eqref{eq:pde-u} we can take the spaces $U = H^1(\Omega)$,
    $W = H_0^1(\Omega) \times H^{1/2}(\partial\Omega)$, and $X = L^2(\Omega)$.
    Writing $w = (w_\Omega, w_\partial)$, we then set
    \[
        B(u, w; x)
        =
        \iprod{\grad u}{\grad w_\Omega}_{L^2(\Omega)}
        - \iprod{x}{w_\Omega}_{L^2(\Omega)}
        + \iprod{\trace_{\partial\Omega} u}{w_\partial}_{L^2(\partial \Omega)}
        \quad\text{and}\quad
        Lw \defeq \iprod{g}{w_\partial}_{L^2(\partial \Omega)}.
    \]
\end{example}

\begin{example}[A nonlinear PDE]
    \label{ex:algorithm:b-idea}
    On a Lipschitz domain $\Omega \subset \R^n$, consider the PDE
    \[
        \left\{
            \begin{array}{ll}
                \grad\cdot(x\grad u) = 0, & \text{on } \Omega,
                \\
                u = g, & \text{on } \partial \Omega.
            \end{array}
        \right.
    \]
    For the weak form \eqref{eq:pde-u} we can take the spaces $U \subset H^1(\Omega)$,
    $W \subset H_0^1(\Omega) \times H^{1/2}(\partial\Omega)$, and $X \subset L^2(\Omega)$, such that at least one of these subspaces ensures the corresponding $x$, $\grad u$, or $\grad w$ to be in the relevant $L^\infty$ space. This, in practise, requires one of the subspaces to be finite-dimensional, or $X$ to be $H^k(\Omega)$ for $k > n/2$, such that the boundedness of $\Omega$ and Sobolev's inequalities provide the $L^\infty$ bound.
    The latter is an option in infinite-dimensional theory, but in finite-dimensional realisations, it is desirable to use a standard $2$-norm in $X$, as proximal operators and gradient steps with respect to $H^k$-norms (for $k > 0$) are computationally expensive.
    Writing $w = (w_\Omega, w_\partial)$, we then set
    \[
        B(u, w; x)
        =
        \iprod{x\grad u}{\grad w_\Omega}_{L^2(\Omega)}
        + \iprod{\trace_{\partial\Omega} u}{w_\partial}_{L^2(\partial \Omega)}
        \quad\text{and}\quad
        Lw \defeq \iprod{g}{w_\partial}_{L^2(\partial \Omega)}.
    \]
    To ensure the coercivity of $B(\freevar, \freevar; x)$, and hence the existence of unique solutions to \eqref{eq:pde-u}, we will further need to restrict $x$ through $\Dom F$.
\end{example}

We require the sum and chain rules for convex subdifferentials to hold on $F + G \circ K$.
This is the case when
\begin{equation}
    \label{eq:algorithm:sumrule-ok}
    \text{there exists an } x\in \Dom (G\circ K) \isect \Dom F \text{ with } Kx\in \interior(\Dom G).
\end{equation}
We refer to \cite{clasonvalkonen2020nonsmooth} for basic results and concepts of infinite-dimensional convex analysis.
Then by the Fréchet differentiability of $Q$ and the compatibility of limiting (Mordukhovich) subdifferentials (denoted $\subdiff_{M}$) with Fréchet derivatives and convex subdifferentials \cite{mordukhovich2006variational,clasonvalkonen2020nonsmooth},
%\todo{If doing $U$ Banach, here just mention that for simplicity that Hilbert}
\[
    \subdiff_{M} J(x) = \subdiff F(x) + \grad S(x)^{\star} \grad Q(S(x)) + K^{\star} \subdiff G(Kx).
\]
Therefore, the Fermat principle for limiting subdifferentials and simple rearrangements (see \cite{tuomov-nlpdhgm,tuomov-nlpdhgm-redo} or \cite[Chapter 15]{clasonvalkonen2020nonsmooth}) establish for \eqref{eq:algorithm:problem} in terms of $(\opt u, \optw, \opt x, \opt y) \in U \times W \times X \times Y$ the necessary first-order optimality condition
\begin{equation}
    \label{eq:algorithm:oc0}
    \left\{\begin{aligned}
        \optu & = S(\optx), \\
        - \grad S(\opt x)^{\star} \grad Q(\optu) -  K^{\star} \opt y & \in \partial F(\opt x), \\
         K \opt x & \in \partial G^*(\opt y).
    \end{aligned}\right.
\end{equation}
We recall that $G^*: Y \to \extR$ is the Fenchel conjugate of $G$.

The term $\grad S(\opt x)^{\star}\grad Q(\optu)$ involves the solution $\opt u$ to the original PDE and the solution $\optw$ to an adjoint PDE. We derive it from a primal-dual reformulation of \eqref{eq:algorithm:problem}.
To do this, we first observe that since $B$ is affine in $x$, it can be decomposed as
\begin{equation}
    \label{eq:algorithm:b-decomposition}
    B(u, w; x) = \linwop(u, w; x) + \constwop(u, w),
\end{equation}
where, $\linwop: U \times W \times X \to \R$ is affine-linear-linear, and $\constwop: U \times W \to \R$ is affine-linear. Indeed $\constwop(u, w) = B(u, w; 0)$, and $\linwop(u, w; x) = \wop(u, w; x) - \wop(u, w; 0)$.
We then introduce the Riesz representation $\xop(u, w)$ of $\linwop(u, w; \freevar) \in X^*$.
Thus
\begin{equation}
    \label{eq:algorithm:riesz}
    \iprod{\xop(u, w)}{x}_X = \linwop(u, w; x)
    \quad\text{for all } u \in U,\, w \in W,\, x \in X.
\end{equation}
We have $\grad_x B(u, w; x) \equiv \xop(u, w) \in X$ for all $x \in X$.

Clearly, also, $\linwop$ is an abbreviation for $(u, w; x) \to D_x B(u, w, 0)(x)$, where, just here, we write $D_x$ for the Fréchet derivative with respect to $x$.
Likewise we write $\uop$ to abbreviate $(u, w; x) \to D_u B(0, w, x)(u)$, and $\xuop$ to abbreviate $(u, w; x) \to D_uB_x(0, w, x)(u)$.
If $\wop$ is linear in $u$, then $\uop=\wop$; and if $\wop$ is linear in both $u$ and $x$, then $\xuop = \wop$.

We may now write \eqref{eq:algorithm:problem} as\footnote{If the PDE \eqref{eq:pde-u} does not have a solution $u$ for any $x \in \Dom F \isect \Dom(G \circ K)$, the inner “max” will be infinite, not reached, and technically, therefore, a “sup”.
In this case also \eqref{eq:algorithm:problem} has no solution.
If \eqref{eq:algorithm:problem} has a solution, there must exist some $(x, u)$ for which (any) $w$ reaches the “max”. Likewise, $y$ reaching the corresponding “max” exists for any $x \in \Dom(G \circ K)$ by basic properties of Fenchel conjugates of convex, proper, lower semicontinuous functions.}
\begin{gather}
    \label{eq:algorithm:minmax:0}
    \min_{x,u}\max_{w}~ F(x) + Q(u) + \wop(u, w; x)-\wrhs w + G(Kx)
\shortintertext{or}
    \label{eq:algorithm:minmax}
    \min_{x,u}\max_{w,y}~ F(x) + Q(u) + \wop(u, w; x)-\wrhs w + \iprod{ K x}{y}_Y - G^*(y).
\end{gather}
In terms of $(\optu, \optw, \optx, \opty) \in U \times W \times X \times Y$, subject to a qualification condition, this problem has the necessary first-order optimality conditions
\begin{equation}
    \label{eq:algorithm:oc1}
    \left\{\begin{aligned}
        \wop(\opt u, \tilde w; \opt x)&= \wrhs\tilde w
        &&\text{for all}\quad \tilde w \in W, \\
        \uop(\tilde u, \optw; \opt x) &= -Q'(\optu)\tilde u
        &&\text{for all}\quad \tilde u \in U,\\
        -\xop(\optu, \optw) -  K^{\star} \opt y &\in \partial F(\opt x), \\
         K\opt x &\in \partial G^*(\opt y).
    \end{aligned}\right.
\end{equation}
This is our principal form of optimality conditions for \eqref{eq:algorithm:problem}.

It is easy to see that \eqref{eq:algorithm:oc1} are necessary for $(\opt u, \opt w, \opt x, \opt y)$ to be a saddle point of \eqref{eq:algorithm:minmax}. The next theorem shows, subject to qualification conditions, that \eqref{eq:algorithm:oc1} are also necessary for a solution to  \eqref{eq:algorithm:minmax} (which may not be a saddle point in the non-convex-concave setting).
Note that $w \in W$ is inconsequential in \eqref{eq:algorithm:minmax}.
If one choice forms a part of a solution of the problem, so does any other (or else the problem has no solution at all).
However, $\opt w$ solving \eqref{eq:algorithm:oc1} is more precisely determined.

\begin{theorem}
    \label{thm:algorithm:oc}
    Suppose $(\optu, w, \optx, \opty) \in U \times W \times X \times Y$ solve \eqref{eq:algorithm:minmax}. If, moreover, $\interior \Dom [F + G \circ K] \ne \emptyset$, and, for some $c>0$,
    \begin{subequations}
    \label{eq:algorithm:cq}
    \begin{gather}
        \label{eq:algorithm:cq1}
        \sup_{\norm{(h_x, h_u)}=1} \linwop(\optu, w; h_x) + \uop(h_u, w; \optx) \ge c \norm{w}
        \quad\text{for all}\quad w \in W
        \quad\text{and}
        \\
        \label{eq:algorithm:cq2}
        \uop(\tilde u, w; \optx) = 0 \ \text{for all}\  \tilde u
        \implies
        \linwop(\optu, w; x) = 0
        \ \text{for all}\ x \in \Dom (F + G \circ K),
    \end{gather}
    \end{subequations}
    then \eqref{eq:algorithm:oc1} holds for some $\opt w \in W$.
\end{theorem}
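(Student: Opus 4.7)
The plan is as follows. The first and fourth lines of \eqref{eq:algorithm:oc1} follow immediately from the two inner suprema in \eqref{eq:algorithm:minmax} without invoking the qualification \eqref{eq:algorithm:cq}. Since $B$ is linear in its middle argument, $\tilde w \mapsto B(\optu, \tilde w; \optx) - L\tilde w$ is a linear functional on $W$, so its supremum is finite (as it must be at any minimax solution) only if it vanishes identically, yielding the first line. For the fourth, $\opty$ attaining $\max_y \iprod{K\optx}{y}_Y - G^*(y)$ is, by Fenchel--Young equality, equivalent to $K\optx \in \partial G^*(\opty)$.

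For the remaining two conditions, I would interpret $(\optu, \optx)$ as a minimizer of the reduced constrained problem
\[
    \min_{u,x}~ F(x) + Q(u) + G(Kx) \quad \text{subject to}\quad C(u, x) \defeq B(u, \freevar; x) - L = 0 \text{ in } W^*,
\]
and apply a Lagrange multiplier theorem (of Zowe--Kurcyusz or Robinson type; see, e.g., \cite{clasonvalkonen2020nonsmooth}) to produce $\opt w \in W$. Condition \eqref{eq:algorithm:cq1} states precisely that the adjoint $DC(\optu, \optx)^*: W \to (U \times X)^*$ is bounded below by $c$, so $DC(\optu, \optx)$ has closed and surjective range---a sufficient constraint qualification for multiplier existence. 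The standing hypothesis $\interior \Dom[F + G \circ K] \ne \emptyset$ then justifies the convex sum and chain rules for the nonsmooth part. Stationarity of the Lagrangian in $u$ yields $Q'(\optu) + \uop(\freevar, \opt w; \optx) = 0$ in $U^*$, i.e., OC 2, while stationarity in $x$ reads $-\xop(\optu, \opt w) - K^{\star} \tilde y \in \partial F(\optx)$ for some $\tilde y \in \partial G(K\optx)$.

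The main obstacle is identifying $\tilde y$ with the prescribed $\opty$ (which, by OC 4 and biconjugate duality, already lies in $\partial G(K\optx)$). Here \eqref{eq:algorithm:cq2} enters: it rules out nontrivial $w \in W$ along which the $u$-sensitivity of the constraint vanishes yet the $x$-sensitivity remains active on $\Dom[F + G \circ K]$. This rigidity means that $\opt w$ can be shifted only by elements of the kernel of the $u$-derivative of the constraint (which do not affect OC 2) without altering the admissible range of $\xop(\optu, \opt w)$ modulo $\partial F(\optx)$, so that $\tilde y = \opty$ can indeed be realised. I expect this matching to be the delicate step, most naturally handled through a perturbation analysis of the value function around the constraint residual in $W^*$ or through a direct Fredholm-type dual construction that fixes the $y$-component before constructing $\opt w$.
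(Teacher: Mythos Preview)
Your overall architecture matches the paper's: first peel off lines 1 and 4 of \eqref{eq:algorithm:oc1} directly from the inner maxima, then reduce to the constrained problem $\min_{u,x}\{R_0(x,u):T(x,u)=L\}$ with $T(x,u)=B(u,\freevar;x)$ and $R_0(x,u)=F(x)+Q(u)+G(Kx)$, and obtain lines 2 and 3 by a multiplier/Fermat argument. The paper does this via Mordukhovich's limiting subdifferential calculus rather than a Zowe--Kurcyusz theorem, but the skeletons coincide, and your reading of \eqref{eq:algorithm:cq1} as ``$T'(\optx,\optu)^*$ bounded below, hence $T'(\optx,\optu)$ surjective'' is exactly what the paper uses.

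Where you go wrong is the role of \eqref{eq:algorithm:cq2}. You treat it as a device to match the multiplier $\tilde y\in\partial G(K\optx)$ produced by stationarity with the given $\opty$. That is not its function. In the paper, \eqref{eq:algorithm:cq2} is the qualification condition needed to apply the \emph{sum rule} $\subdiff_M(R_0+\delta_A)\subset\subdiff_M R_0+\subdiff_M\delta_A$: one must check that $\subdiff^\infty\delta_A(\optx,\optu)\cap(-\subdiff^\infty R_0(\optx,\optu))=\{0\}$, and since $Q$ is smooth this reduces precisely to the implication in \eqref{eq:algorithm:cq2}. In your Lagrange-multiplier language, surjectivity of $T'(\optx,\optu)$ alone is \emph{not} a sufficient CQ here, because $R_0$ is nonsmooth with $\Dom R_0=\Dom(F+G\circ K)\times U\subsetneq X\times U$; you still need a condition preventing the normal cone to $A$ from pointing into the singular part of $\subdiff R_0$, and that is what \eqref{eq:algorithm:cq2} supplies. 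The assumption $\interior\Dom[F+G\circ K]\ne\emptyset$ is used separately to make $R_0$ SNEC, i.e., to justify the other hypothesis of the sum rule.

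As for the $\tilde y$ versus $\opty$ issue you flag: the paper does not use \eqref{eq:algorithm:cq2} for this, and in fact simply writes that the Fermat inclusion ``expands as the middle two lines of \eqref{eq:algorithm:oc1}'' without isolating a specific $\tilde y$. Since any $\tilde y\in\partial G(K\optx)$ also satisfies line 4, the system \eqref{eq:algorithm:oc1} holds for \emph{some} dual pair $(\optw,\tilde y)$; whether that $\tilde y$ must coincide with the particular $\opty$ from the hypothesis is a point the paper leaves implicit. So do not build your argument around \eqref{eq:algorithm:cq2} resolving it---that is not where the condition enters.
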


After an affine shift and restriction of $x$ to a subspace, the condition $\interior \Dom [F + G \circ K] \ne \emptyset$ can always be relaxed to the corresponding relative interior being non-empty.
Since the proof of \cref{thm:algorithm:oc} is long and depends on techniques not needed in our main line of work, we relegate it to \cref{sec:oc}.

\begin{example}
    If $W=U$, taking $h_u=w/\norm{w}$ and $h_x=0$, we see that the qualification conditions \eqref{eq:algorithm:cq} hold when $B_u(\freevar,\freevar; \optx)$ is coercive. Similarly, also when $W \ne U$, if the weak coercivity conditions of the Babuška--Lax--Milgram theorem hold for $(w, h_u) \mapsto B_u(h_u, w; \optx)$, then so do \eqref{eq:algorithm:cq}.
\end{example}

The second line of \eqref{eq:algorithm:oc1} is the adjoint PDE, needed for $\grad S(\opt x)^* \grad Q(\optu)$ in \eqref{eq:algorithm:oc0}:

\begin{corollary}
    \label{cor:sol-adj-is-W}
    Suppose \eqref{eq:algorithm:cq} hold for $\opt x = x \in X$, some $w \in W$, and $\opt u = u$ a unique solution to \eqref{eq:pde-u}. Then the
    solution operator $S$ of \eqref{eq:pde-u} satisfies for all $z \in U$ that
    \[
        \grad S(x)^{\star} z = \xop(u, w)
        \quad \text{where} \quad
        u = S(x)
        \quad\text{and}\quad
        \left\{\begin{array}{l}
        w \text{ solves the weak adjoint PDE:}\\
        \uop(\tilde u, w; x) = - \iprod{z}{\tilde u} \text{ for all } \tilde u \in U.
        \end{array}\right.
    \]
\end{corollary}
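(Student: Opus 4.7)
The plan is to reduce the corollary to standard Lagrangian/adjoint calculus: differentiate the state equation \eqref{eq:pde-u} in $x$ to obtain a linearised direct PDE for $\grad S(x) h$, then replace the resulting pairing via the adjoint PDE satisfied by $w$.

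Since $\wop$ is affine in both $u$ and $x$, its partial Fréchet derivatives at any $(u, \tilde w, x)$ are exactly $\uop(\freevar, \tilde w; x)$ and $\linwop(u, \tilde w; \freevar)$. Differentiating the defining identity $\wop(S(\tilde x), \tilde w; \tilde x) = \wrhs \tilde w$ at $\tilde x = x$ along a direction $h \in X$ therefore yields the linearised direct equation
\[
    \uop(\grad S(x) h, \tilde w; x) = -\linwop(u, \tilde w; h) = -\iprod{\xop(u, \tilde w)}{h}_X \quad\text{for all}\quad \tilde w \in W.
\]
Existence of $\grad S(x)$ is part of the blanket assumption made at the end of \cref{sec:algorithm:problem}; alternatively, it follows by the implicit function theorem from the restriction of \eqref{eq:algorithm:cq1} to $h_x = 0$. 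Now, for the given $z \in U$, let $w \in W$ solve the adjoint PDE $\uop(\tilde u, w; x) = -\iprod{z}{\tilde u}_U$ for all $\tilde u \in U$. Taking $\tilde w = w$ in the linearised direct equation and $\tilde u = \grad S(x) h$ in the adjoint both produce the same scalar $\uop(\grad S(x) h, w; x)$, so
\[
    -\iprod{z}{\grad S(x) h}_U = \uop(\grad S(x) h, w; x) = -\iprod{\xop(u, w)}{h}_X.
\]
Since $h \in X$ was arbitrary, $\iprod{\grad S(x)^{\star} z}{h}_X = \iprod{\xop(u, w)}{h}_X$ for every $h$, which is precisely the claim $\grad S(x)^{\star} z = \xop(u, w)$.

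The main obstacle will be cleanly extracting existence of the adjoint state $w$ from \eqref{eq:algorithm:cq}: the combined $(h_x, h_u)$-inf-sup in \eqref{eq:algorithm:cq1} together with the compatibility clause \eqref{eq:algorithm:cq2} encode exactly the closed-range condition needed for solvability of $\uop(\freevar, w; x) = -\iprod{z}{\freevar}_U$, but this unpacking is not entirely routine. A slick alternative is to bypass the direct verification by invoking \cref{thm:algorithm:oc} on a contrived instance of \eqref{eq:algorithm:minmax} in which the intended quadruple $(u, w, x, 0)$ is forced to be minmax optimal—for example, taking $F$, $G$, $K$ so that $x$ is the only admissible control and $Q$ linear with $Q'(u) = z$—at which point the theorem itself produces the requisite $w$ and the pairing computation above concludes the proof.
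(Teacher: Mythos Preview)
Your argument is correct and follows the standard adjoint/Lagrangian route: differentiate the state equation to obtain the sensitivity relation $\uop(\grad S(x)h,\tilde w;x)=-\linwop(u,\tilde w;h)$, then test against the adjoint $w$ to transpose the pairing. This is different from what the paper does. The paper's proof never touches the sensitivity equation; instead it manufactures a specific instance of \eqref{eq:algorithm:minmax} with $F\equiv 0$, $K=\Id$, $G=\delta_{\{x\}}$, $Q=\iprod{z}{\freevar}_U$, observes that any $(u,w,x,y)$ with $u=S(x)$ solves it, applies \cref{thm:algorithm:oc} to read off both the adjoint PDE for $\optw$ and the relation $y=-\xop(u,\optw)$ from \eqref{eq:algorithm:oc1}, and then compares with the relation $y=-\grad S(x)^\star z$ coming from \eqref{eq:algorithm:oc0} for the same contrived problem. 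So the paper extracts the identity itself from the comparison of the two optimality systems \eqref{eq:algorithm:oc0} and \eqref{eq:algorithm:oc1}, whereas you prove the identity by a direct chain-rule computation and only need outside help for the existence of $w$. Your final paragraph already anticipates precisely the paper's trick as the ``slick alternative'' for that existence step; the difference is that the paper uses that single invocation of \cref{thm:algorithm:oc} to do the entire job, while you use it only to supply $w$ and then finish with the more elementary pairing argument. Both are valid; yours is arguably more transparent about why the adjoint representation holds, while the paper's makes the corollary a genuine corollary of \cref{thm:algorithm:oc} with no additional calculus.
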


\begin{proof}
    Take $F \equiv 0$, $K=\Id$, $G \equiv \delta_{\{x\}}$, and $Q = \iprod{z}{\freevar}_U $. Then any solution $(\optu, w, \optx, y)$ to \eqref{eq:algorithm:minmax} has $\opt x = x$.
    Since $G^*(\tilde y)=\iprod{x}{\tilde y}$, any choice of $y$ and $w$ solve \eqref{eq:algorithm:minmax}.
    Therefore, \cref{thm:algorithm:oc} applied to the problem we just constructed shows that
    \begin{equation*}
        \uop(\tilde u, w; x) = -\iprod{z}{\tilde u}_U
            \ \text{for all}\ \tilde u \in U
            \quad\text{and}\quad
        -\xop(u, w) -  y = 0.
    \end{equation*}
    On the other hand, \eqref{eq:algorithm:oc0} reduces to some $y$ satisfying
    $
        - \grad S(x)^{\star} z -  y = 0.
    $
    Comparing these two expressions, we obtain the claim.
\end{proof}

\subsection{Primal-dual proximal splitting: a recap}
\label{sec:algorithm:pdps}

The primal-dual proximal splitting (PDPS) for \eqref{eq:intro:prob-pdps} is based on the optimality conditions
\begin{equation}
    \label{eq:algorithm:pdps-oc}
    \left\{\begin{aligned}
        -  K^{\star} \opt y &\in \partial F(\opt x), \\
         K\opt x &\in \partial G^*(\opt y).
    \end{aligned}\right.
\end{equation}
These are just the last two lines of \eqref{eq:algorithm:oc1} without $\xop$. As derived in \cite{tuomov-proxtest,he2012convergence,clasonvalkonen2020nonsmooth}, the basic (unaccelerated) PDPS solves \eqref{eq:algorithm:pdps-oc} by iteratively solving for each $k \in \N$ the system
\begin{equation}
    \label{eq:algorithm:pdps-implicit}
    \left\{\begin{aligned}
        0 &\in \tau\partial F(\nxt x) + \tau K^{\star}\this y + \nxt x - \this x \\
        0 &\in \sigma\partial G^*(\nxt y) - \sigma K[\nxt x + \omega(\nxt x-\this x)] + \nxt y - \this y,
    \end{aligned}\right.
\end{equation}
where the primal and dual step length parameters $\tau, \sigma>0$ satisfy $\tau\sigma\norm{K} < 1$, and the over-relaxation parameter $\omega=1$.
We can write \eqref{eq:algorithm:pdps-implicit} in explicit form as
\[
    \left\{\begin{aligned}
        \nxt x & \defeq \prox_{\tau F}\bigl(\this x - \tau K^{\star}\this y\bigr),
        \\
        \nxt y & \defeq \prox_{\sigma G^*}\bigl(\this y + \sigma K[\nxt x + \omega(\nxt x-\this x)]\bigr).
    \end{aligned}\right.
\]

\subsection{Algorithm derivation}
\label{sec:algorithm:algorithm}

The derivation of the PDPS and the optimality conditions \eqref{eq:algorithm:oc1} suggest to solve \eqref{eq:algorithm:oc1} by iteratively solving
\begin{equation}
    \label{eq:algorithm:implicit0}
    \left\{\begin{aligned}
        \wop(\nxt u, \freevar; \this x)&= \wrhs,
        \\
        \uop(\freevar, \nxt w; \this x) &= -Q'(\nxt u),
        \\
        0 &\in \tau_k\partial F(\nxt x) + \tau_k \xop(\nxt u, \nxt w) + \tau K^{\star}\this y + \nxt x - \this x \\
        0 &\in \sigma_{k+1}\partial G^*(\nxt y) - \sigma_{k+1} K[\nxt x + \omega_k(\nxt x-\this x)] + \nxt y - \this y.
    \end{aligned}\right.
\end{equation}
We have made the step length and over-relaxation parameters iteration-dependent for acceleration purposes.
The indexing $\tau_k$ and $\sigma_{k+1}$ is off-by-one to maintain the symmetric update rules from \cite{chambolle2011first}.

The method in \eqref{eq:algorithm:implicit0} still requires exact solution of the PDEs.
For some splitting operators $\Gamma_k, \Upsilon_k: U \times W \times X \to \R$, we therefore transform the first two lines into
\begin{subequations}
\label{eq:algorithm:pde-split}
\begin{align}
    \wop(\nxt u, \freevar; \this x) - \Gamma_k(\nxt u - \this u, \freevar; \this x) &= \wrhs
    \quad\text{and}
    \\
    \uop(\freevar, \nxt w; \this x) - \Upsilon_k(\freevar, \nxt w - \thisw; \this x) &= - Q'(\nxt u).
\end{align}
\end{subequations}

\begin{example}[Splitting]
    \label{ex:splitting}
    Let $B(u, w; x)= \iprod{A_x u}{w}$ for symmetric $A_x \in \R^{n \times n}$ on $U=W=\R^n$.
    Take $\Gamma_k(u, w; x)=\iprod{[A_x - N_x] u}{w}$ and $\Upsilon_k=\Gamma_k$ for easily invertible $N_x \in \R^{n \times n}$. With $L=\iprod{b}{\freevar}$, $b \in \R^n$ and $M_x \defeq A_x-N_x$, \eqref{eq:algorithm:pde-split} now reads
    \begin{equation}
        \label{eq:algorithm:splitting-update}
        N_{\this x} \nxt u = b - M_{\this x} \this u
        \quad\text{and}\quad
        N_{\this x} \nxt w = - \grad Q(\nxt u) - M_{\this x} \thisw.
    \end{equation}
    For Jacobi splitting we take $N_{\this x}$ as the diagonal part of $A_{\this x}$, and for Gauss–Seidel splitting as the lower triangle including the diagonal.
    We study these choices further in \cref{sec:examples:splittings}.
\end{example}

Let us introduce the general notation $v=(u, w, x, y)$ as well as the \term{step length operators} $\Step_k \in \linear(U^* \times W^* \times X \times Y; U^* \times W^* \times X \times Y)$,
\begin{equation}
    \label{eq:algorithm:stepk}
    \Step_k \defeq \diag\begin{pmatrix} \Id_{U^*} & \Id_{W^*} & \tau_k \Id_X & \sigma_{k+1} \Id_Y\end{pmatrix},
\end{equation}
the set-valued operators $ H_k: U \times W \times X \times Y \setto U^* \times W^* \times X \times Y$,
\begin{equation}
    \label{eq:algorithm:hk}
    H_k(v) \defeq
    \begin{pmatrix}
        \wop(u, \freevar; \this x) - \Gamma_k(u - \this u, \freevar; \this x) - \wrhs \\
        \uop(\freevar, w; \this x) - \Upsilon_k(\freevar, w - \thisw; \this x) + Q'(u)
        \\
        \partial F(x) + \xop(u,w) +  K^{\star}y \\
        \partial G^*(y) -  Kx
    \end{pmatrix},
\end{equation}
and the \term{preconditioning operators} $M_k \in \linear(U \times W \times X \times Y; U^* \times W^* \times X \times Y)$,
\begin{equation}
    \label{eq:algorithm:mk}
    M_k \defeq \begin{pmatrix}
        0 \\
        & 0 \\
        & & \Id_X & -\tau_k K^{\star} \\
        & & -\omega_k\sigma_{k+1} K & \Id_Y
    \end{pmatrix}.
\end{equation}
The implicit form of our proposed algorithm for the solution of \cref{eq:algorithm:problem} is then
\begin{equation}
    \label{eq:algorithm:implicit}
    0 \in \Step_k H_k(\nxt v) + M_k(\nxt v-\this v).
\end{equation}
Writing out \eqref{eq:algorithm:implicit} in terms of explicit proximal maps, we obtain \cref{alg:alg}.

\begin{remark}
    The index $k$ for $\Step_k, H_k, M_k$ in \crefrange{eq:algorithm:stepk}{eq:algorithm:implicit} is inconsistent with some of our earlier articles that would use the index $k+1$ similarly to the unknown $\nxt v$.
    We have decided to make this change to keep the notation lighter.
\end{remark}

\begin{algorithm}
	\caption{Primal dual splitting with parallel adaptive PDE solves (PDPAP)}
	\label{alg:alg}
	\begin{algorithmic}[1]
        \Require $F: X \to \extR$, $G^*: Y \to \extR$, Fréchet-differentiable $Q: U \to \R$; $K \in \linear(X; Y)$, $L \in U^*$; and $B: U \times W \times X \to \R$, bilinear in the first two variables, affine in the third, all on Hilbert spaces $X$, $Y$, $U$, and $W$.
        Riesz representation $\xop(u, w)$ of $\linwop(u, w; \freevar)$; see \eqref{eq:algorithm:riesz}.
        For all $k \in \N$, splittings $\Gamma_k, \Upsilon_k: U \times W \times X \to \R$ and step length and over-relaxation parameters $\tau_k,\sigma_{k+1},\omega_k>0$;
        see \cref{thm:convergence:accel} or \ref{thm:convergence:linear}.
        \State Pick an initial iterate $(u^0, w^0, x^0, y^0) \in U \times W \times X \times Y$.
  		\For{$k \in \N$}
            \State\label{step:alg:pde}
            Solve $\nexxt u \in U$ from the split weak PDE
            \[
                \wop(\nxt u, \tilde w; \this x) - \Gamma_k(\nxt u - \this u, \tilde w; \this x) = \wrhs \tilde w
                \quad\text{for all}\quad\tilde w \in W.
            \]
            \State\label{step:alg:adjoint-pde}
            Solve $\nexxt w \in W$ from the split weak adjoint PDE
            \[
                \uop(\tilde u, \nxt w; \this x) - \Upsilon_k(\tilde u, \nxt w - \thisw; \this x) = - Q'(\nxt u)\tilde u
                \quad\text{for all}\quad\tilde u \in U.
            \]

            \State
            $
                \nxt x \defeq \prox_{\tau_k F}\bigl(\this x - \tau_k \xop(\nxt u, \nxt w) - \tau_k K^{\star}\this y\bigr)
            $
            \State
            $
                \nxxt x \defeq \nxt x + \omega_k(\nxt x - \this x)
            $
            \State
            $
                \nxt y \defeq \prox_{\sigma_{k+1} G^*}\bigl(\this y + \sigma_{k+1} K\nxxt x\bigr)
            $
        \EndFor
    \end{algorithmic}
\end{algorithm}

\section{Convergence}
\label{sec:convergence}

We now treat the convergence of \cref{alg:alg}.
Following \cite{tuomov-proxtest,clasonvalkonen2020nonsmooth} we “test” its implicit form \eqref{eq:algorithm:implicit} by applying on both sides the linear functional $\dualprod{Z_k\freevar}{\nexxt v - \opt v}$. Here $Z_k$ is a convergence rate encoding “testing operator” (\cref{sec:convergence:testing}).
A simple argument involving the three-point identity \eqref{eq:pythagoras} and a growth estimate  for $H_k$ then yields in \cref{sec:convergence:growth} a Féjer-type monotonicity estimate in terms of iteration-dependent norms. This establishes in \cref{sec:convergence:main} global convergence subject to a growth condition.
We start with assumptions.

\subsection{The main assumptions}
\label{sec:convergence:assumptions}

We start with our main structural assumption. Further central conditions related to the PDE constraint will follow in \cref{assump:convergence:splitting}, and through its verification for specific linear system solvers in \cref{sec:examples:splittings}.

\begin{assumption}[Structure]
    \label{assump:convergence:structural}
    On Hilbert spaces $X$, $Y$, $U$, and $W$, we are given convex, proper, and lower semicontinuous $F: X \to \extR$, $G^*: Y \to \extR$, and $Q: U \to \R$ with $Q$ Fréchet differentiable, as well as $K \in \linear(X; Y)$, $L \in U^*$, and $B: U \times W \times X \to \R$ affine-linear-affine.
    We assume:
    \begin{enumerate}[label=(\roman*)]
        \item
        \label{item:convergence:structural:convexity}
        $F$ and $G$ are (strongly) convex with factors $ \gamma_F, \gamma_{G^*} \geq 0 $. With $K$ they satisfy the condition \eqref{eq:algorithm:sumrule-ok} for the subdifferential sum and chain rules to be exact.
        \item\label{item:convergence:structural:subspace}
        For all $x \in  \Dom F$, there exist solutions $(u, w) \in U \times W$ to the PDE $\wop(u,  \freevar; x) = \wrhs$ and the adjoint PDE $\uop(\freevar, w; x) = -Q'(u)$.
    \end{enumerate}
    We then fix a solution $\opt v=(\opt u, \opt w, \opt x, \opt y) \in U \times W \times X \times Y$ to \eqref{eq:algorithm:oc1} and assume that:
    \begin{enumerate}[resume*]
        \item\label{item:convergence:structural:bound-sup}
        For some $\SupNorm(\opt u), \SupNorm(\opt w) \ge 0$, for all $(u, w) \in U \times W$ and $x \in \Dom F$, we have
        \begin{align*}
            \xuop(u, \opt w; x - \opt x) \le \sqrt{\SupNorm(\bar w)}\norm{u}_U\norm{x - \optx}_X
            \quad\text{and}\quad
            \linwop(\opt u, w; x - \optx) \le \sqrt{\SupNorm(\bar u)}\norm{w}_W\norm{x - \optx}_X.
        \end{align*}

        \item\label{item:convergence:structural:bound}
        For some $C_x \ge 0$, for all $(u, w) \in U \times W$ and $x \in \Dom F$ we have the bound
        \[
            \xuop(u, w; x-\opt x)
            \le C_x \norm{u}_U\norm{w}_W.
        \]
    \end{enumerate}
\end{assumption}

\begin{remark}
    \label{rem:convergence:structural}
    Part \cref{item:convergence:structural:convexity} is easy to check.
    In general, \cref{item:convergence:structural:bound} requires $\Dom F$ to be bounded with respect to an $\infty$-norm with $\linwop(u, w, x) \le C \norm{u}_U\norm{w}_W\norm{x}_\infty$ for some $C>0$. Then $C_x= \sup_{x \in \Dom F} C \norm{x}_\infty$.
    If $\linwop$ is independent of $u$, i.e., for linear PDEs, both $C_x=0$ and $\SupNorm(\bar w)=0$, while $\SupNorm(\bar u)$ is a constant independent of $\opt u$.
    We study \crefrange{item:convergence:structural:subspace}{item:convergence:structural:bound} further in \cref{sec:examples:W}.
\end{remark}

The next assumption encodes our conditions on the PDE splittings.

\begin{assumption}[Splitting]
    \label{assump:convergence:splitting}
    Let \cref{assump:convergence:structural} hold.
    For $k \in \N$, for which this assumption is to hold, we are given splitting operators $\Gamma_k, \Upsilon_k: U \times W \times X \to \R$ and $\this v=(\this u, \this w, \this x, \this y) \in U \times W \times X \times Y$ such that:
    \begin{enumerate}[label=(\roman*)]
        \item\label{item:convergence:splitting:linear}
        $\Gamma_k$ is linear in the second argument, $\Upsilon_k$ in the first.

        \item\label{item:convergence:splitting:existence}
        There exist solutions $\nextu$ and $\nextw$ to the split equations \eqref{eq:algorithm:pde-split}.

        \item\label{item:convergence:splitting:principal}
        For some $\gamma_B > 0$ and $C_Q, \pi_u, \pi_w \ge 0$, we have
        \begin{align*}
            \norm{\thisu-\optu}_U^2
            &
            \ge
            \gamma_B \norm{\nxt u-\optu}_U^2
            - \pi_u \norm{\this x - \opt x}_X^2
            \quad\text{and}
            \\
            \norm{\thisw-\optw}_W^2
            &
            \ge
            \gamma_B \norm{\nxt w-\optw}_W^2
            - C_Q \norm{\nxt u-\optu}_U^2
            - \pi_w \norm{\this x - \opt x}_X^2.
        \end{align*}
    \end{enumerate}
\end{assumption}

We verify the assumption for standard splittings in \cref{sec:examples:splittings}.
The verification will introduce the assumption that $Q'$ be Lipschitz. The Lipschitz factor then appears in $C_Q$, justifying the $Q$-subscript notation.
Generally $\pi_u$ and $\pi_w$ model the $x$-sensitivity of $\wop$ and $\uop$.
For linear PDEs, such as \cref{ex:algorithm:b-idea-linear}, $\uop$ does not depend on $x$. In that case most iterative solvers for the adjoint PDE would also be independent of $x$ and have $\pi_w=0$.
The factor $\gamma_B$ relates to the contractivity of the iterative solver.

The next, final, assumption introduces \term{testing parameters} that encode convergence rates and restrict the \term{step length parameters} in the standard primal-dual component of our method.
It has no difference to the treatment of the PDPS in \cite{tuomov-proxtest,clasonvalkonen2020nonsmooth}.
Dependent on whether both, one, or none of $\tilde\gamma_F>0$ and $\tilde\gamma_{G^*}>0$, the parameters can be chosen to yield varying modes and rates of convergence.

\begin{assumption}[Primal-dual parameters]
    \label{assump:convergence:testing}
    Let \cref{assump:convergence:structural} hold.
    For all $k \in \N$, the \term{testing parameters} $\varphi_k, \psi_k > 0$, \term{step length parameters} $\tau_k, \sigma_k > 0$, and the \term{over-relaxation parameter} $\omega_k \in (0, 1]$ satisfy for some $\tilde \gamma_F \in [0, \gamma_F]$ and $\tilde \gamma_{G^*} \in [0, \gamma_{G^*}]$, and $\kappa \in (0, 1)$ that
    \begin{align*}
        \phi_{k+1} & = \phi_k(1+2\tilde\gamma_{F}\tau_k),
        &
        \psi_{k+1} & = \psi_k(1+2\tilde\gamma_{G^*}\sigma_k),
        \\
        \eta_k & \defeq \phi_k \tau_k = \psi_k\sigma_k,
        &
        \omega_k &= \inv\eta_{k+1}\eta_{k},
        \quad\text{and}
        &
        \kappa & \ge \frac{\tau_k\sigma_k}{1+2\tilde\gamma_{G^*}\sigma_k}\norm{K}^2.
    \end{align*}
\end{assumption}

\subsection{The testing operator}
\label{sec:convergence:testing}

To complement the primal-dual testing parameters in \cref{assump:convergence:testing}, we introduce testing parameters $\lambda_k,\theta_k>0$ corresponding to the PDE updates in our method; the first two lines of \eqref{eq:algorithm:implicit}.
We combine all of them into the \term{testing operator} $Z_k \in \linear(U^* \times W^* \times X \times Y;  U^* \times W^* \times X^{*} \times Y^{*})$ defined by
\begin{equation}
    \label{eq:convergence:zk}
    Z_k \defeq \diag\begin{pmatrix}\lambda_k \Id & \theta_k \Id & \varphi_k \Inj_X & \psi_{k+1} \Inj_Y\end{pmatrix}.
\end{equation}
Recalling $M_k$ and $Z_k$ from \eqref{eq:algorithm:mk} and \eqref{eq:convergence:zk}, thanks to \cref{assump:convergence:testing}, we have
\begin{equation}
    \label{eq:convergence:zm-expansion}
    Z_kM_k = \begin{pmatrix}
        0 \\
        & 0 \\
        & & \varphi_k\Inj_X & -\eta_k \Inj_X K^{\star} \\
        & & -\eta_k \Inj_Y K & \psi_{k+1}\Inj_Y
    \end{pmatrix}.
\end{equation}
Therefore,
\begin{equation}
    \label{eq:convergence:z-xi-def}
    Z_k(M_k+\Xi_k)=Z_{k+1}M_{k+1}+D_{k+1}
\end{equation}
for skew-symmetric
\[
    D_{k+1} \defeq \begin{pmatrix}
        0 \\
        & 0 \\
        & & 0 & (\eta_{k+1} + \eta_k) \Inj_X K^{\star} \\
        & & -(\eta_{k+1} + \eta_k) \Inj_Y K & 0
    \end{pmatrix}
\]
and $\Xi_k \in \linear(U \times W \times X \times Y;  U^* \times W^* \times X^{*} \times Y^{*})$ satisfying
\begin{equation}
    \label{eq:convergence:z-xi-expansion}
    Z_k\Xi_k = \begin{pmatrix}
        0 \\
        & 0 \\
        & & 2\eta_k\tilde\gamma_{F}\Inj_X & 2\eta_k \Inj_X K^{\star} \\
        & & -2\eta_{k+1} \Inj_Y K & 2\eta_{k+1}\tilde\gamma_{G^*}\Inj_Y
    \end{pmatrix}.
\end{equation}

\Cref{assump:convergence:testing} ensures $Z_kM_k $ to be positive semi-definite.
The proof is exactly as for the PDPS, see, e.g., \cite{clasonvalkonen2020nonsmooth}, but we include it for completeness.

\begin{lemma}
    \label{lemma:convergence:zm-lower-bound}
    Let $k \in \N$ and suppose \cref{assump:convergence:testing} holds.
    Then
    \[
        Z_kM_k \geq\operatorname{diag}\left(
            0,
            0,
            \phi_k (1-\kappa)\Inj_X,
            \psi_{k+1}\epsilon\Inj_Y
        \right)
        \ge 0
        \quad\text{for}\quad
        \epsilon \defeq 1 - \frac{\tau_k\sigma_k}{\kappa(1+2\tilde\gamma_{G^*}\sigma_k)} \norm{K}^2 > 0.
    \]
\end{lemma}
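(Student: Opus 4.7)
The plan is to reduce the lemma to a pure primal--dual estimate, since the first two diagonal blocks of $Z_k M_k$ are zero by construction. It therefore suffices to prove that for all $x \in X$ and $y \in Y$,
\[
    \phi_k \norm{x}_X^2 - 2\eta_k \iprod{Kx}{y}_Y + \psi_{k+1}\norm{y}_Y^2
    \;\ge\;
    \phi_k(1-\kappa)\norm{x}_X^2 + \psi_{k+1}\epsilon\norm{y}_Y^2,
\]
where the left-hand side is $\dualprod{Z_kM_k(x,y)}{(x,y)}$ expanded from \eqref{eq:convergence:zm-expansion} using $\dualprod{\Inj_X K^\star y}{x} = \iprod{Kx}{y}_Y$ and $\omega_k\eta_{k+1}=\eta_k$.

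First I would bound the cross term by Young's inequality \eqref{eq:prox} in the sharp form
\[
    2\eta_k\abs{\iprod{Kx}{y}_Y} \;\le\; \frac{\eta_k^2\norm{K}^2}{a}\norm{x}_X^2 + a\norm{y}_Y^2,
\]
and then pick the weight $a\defeq \psi_{k+1}(1-\epsilon)$, so that the $\norm{y}^2$ term has coefficient exactly $\psi_{k+1}-a = \psi_{k+1}\epsilon$, matching the target.

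The key algebraic step is to verify that, with this choice of $a$, the coefficient in front of $\norm{x}_X^2$ equals $\phi_k\kappa$ and so leaves the residual $\phi_k(1-\kappa)\norm{x}_X^2$. Using the recursion $\psi_{k+1}=\psi_k(1+2\tilde\gamma_{G^*}\sigma_k)$ and the definition of $\epsilon$, we get $a = \psi_k\tau_k\sigma_k\norm{K}^2/\kappa$. Combined with $\eta_k^2 = \phi_k\tau_k\cdot\psi_k\sigma_k$, this gives
\[
    \frac{\eta_k^2\norm{K}^2}{a} = \frac{\phi_k\tau_k\cdot\psi_k\sigma_k\cdot\norm{K}^2\cdot\kappa}{\psi_k\tau_k\sigma_k\norm{K}^2} = \phi_k\kappa,
\]
as required. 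This is the only nontrivial computation, and it is essentially forced by the definitions, so I do not expect any real obstacle.

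Finally, the strict positivity $\epsilon>0$, and hence the second inequality in the claim, follows directly from the step-length condition $\kappa \ge \tau_k\sigma_k\norm{K}^2/(1+2\tilde\gamma_{G^*}\sigma_k)$ in \cref{assump:convergence:testing}, together with $\kappa<1$; positivity of $1-\kappa$ is immediate from $\kappa\in(0,1)$.
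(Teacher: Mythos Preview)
Your proof is correct and follows essentially the same route as the paper: both reduce to the $(x,y)$-block, apply Young's inequality to the cross term $2\eta_k\iprod{Kx}{y}_Y$, and use the relations $\eta_k=\phi_k\tau_k=\psi_k\sigma_k$ and $\psi_{k+1}=\psi_k(1+2\tilde\gamma_{G^*}\sigma_k)$ to identify the resulting coefficients. The only cosmetic difference is that the paper chooses the Young weight so that the $\norm{x}^2$-coefficient equals $\phi_k\kappa$ and then reads off the $\norm{y}^2$-coefficient as $\psi_{k+1}\epsilon$, whereas you fix the $\norm{y}^2$-coefficient first and verify the $\norm{x}^2$-coefficient; the two choices coincide.
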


\begin{proof}
    By Young's inequality, for any $v=(u,w,x,y)$,
    \begin{align*}
        \dualprod{Z_k M_k v}{v} &= \varphi_k\norm{x}_X^2 + \psi_{k+1}\norm{y}_{Y}^2 - 2\eta_k\iprodx{x, K^{\star} y}_X
        \\
        &
        \geq \varphi_k(1-\kappa)\norm{x}_X^2 + \psi_{k+1}\norm{y}_Y^2-
        \inv\kappa\varphi_k\tau_k^2\norm{K^{\star}y}_X^2.
    \end{align*}
    Since $\varphi_k\tau_k^2=\eta_k\tau_k=\psi_k\sigma_k\tau_k=\psi_{k+1}\sigma_k\tau_k/(1+2\tilde\gamma_{G^*}\sigma_k)$, the claim follows.
\end{proof}

\subsection{Growth estimates and monotonicity}
\label{sec:convergence:growth}

We start by deriving a three-point monotonicity estimate for $H_k$.
This demands the somewhat strict bounds \eqref{eq:convergence:balance0}.

\begin{lemma}
    \label{lem:ineq-Hk-norm-1}
    Let $k \in \N$. Suppose \cref{assump:convergence:testing,assump:convergence:structural,assump:convergence:splitting} hold and
    \begin{subequations}
        \label{eq:convergence:balance0}
        \begin{align}
            \label{eq:convergence:balance0:gammaf}
            \gamma_F
            &
            \ge
            \tilde\gamma_F
            + \epsilon_u + \epsilon_w
            + \frac{\lambda_{k+1}\pi_u + \theta_{k+1}\pi_w}{\eta_k},
            \\
            \label{eq:convergence:balance0:gammag}
            \gamma_{G^*}
            &
            \ge
            \tilde\gamma_{G^*},
            \\
            \label{eq:convergence:balance0:gammab}
            \gamma_B
            &
            \ge
            \frac{\lambda_{k+1}}{\lambda_k}
            + \frac{\theta_k}{\lambda_k}C_Q
            + \frac{\eta_k\SupNorm(\optw)}{4\epsilon_w\lambda_k}
            + \frac{C_x \mu \eta_k}{2\lambda_k},
            \quad\text{and}
            \\
            \label{eq:convergence:balance0:gammab2}
            \gamma_B
            &
            \ge
            \frac{\theta_{k+1}}{\theta_k}
            + \frac{\eta_k\SupNorm(\opt u)}{4\epsilon_u\theta_k}
            + \frac{C_x\eta_k}{2 \mu \theta_k}
        \end{align}
    \end{subequations}
    for some $\epsilon_u,\epsilon_w,\mu > 0$.
    Then $H_k$ defined in \eqref{eq:algorithm:hk} satisfies
    \begin{equation}
        \label{eq:ineq-Hk-norm-1}
        \begin{aligned}[t]
            \dualprod{Z_k \Step_k H_k(\nxt v)}{\nxt v-\opt v}
            &
            \ge
            \frac12\norm{\nxt v-\opt v}_{Z_k\Xi_k}^2
            \\
            \MoveEqLeft[-1]
            + (\lambda_{k+1}\pi_u + \theta_{k+1}\pi_w) \norm{\nxt x-\opt x}_X^2
            - (\lambda_k\pi_u + \theta_k\pi_w) \norm{\this x-\opt x}_X^2
            \\
            \MoveEqLeft[-1]
            + \lambda_{k+1} \norm{\nxt u-\opt u}_U^2
            - \lambda_k \norm{\thisu-\optu}_U^2
            \\
            \MoveEqLeft[-1]
            + \theta_{k+1} \norm{\nxt w-\optw}_W^2
            - \theta_k \norm{\thisw-\optw}_W^2.
        \end{aligned}
    \end{equation}
\end{lemma}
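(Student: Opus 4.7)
The plan is to decompose the pairing $\dualprod{Z_k \Step_k H_k(\nxt v)}{\nxt v - \opt v}$ via the block-diagonal structure of $Z_k\Step_k$ and the four rows of $H_k$ in \eqref{eq:algorithm:hk}, and to handle each block separately. For the primal ($x$) and dual ($y$) blocks, I would use strong convexity of $F$ (factor $\gamma_F$) and of $G^*$ (factor $\gamma_{G^*}$) paired with the subgradient elements coming from the optimality conditions \eqref{eq:algorithm:oc1}, namely $-\xop(\optu, \optw) - K^{\star}\opty \in \partial F(\optx)$ and $K\optx \in \partial G^*(\opty)$, to obtain, for any subgradient elements $\hat\xi_x, \hat\zeta_y$,
\begin{align*}
    \eta_k\dualprod{\hat\xi_x}{\nxt x - \optx} + \eta_{k+1}\dualprod{\hat\zeta_y}{\nxt y - \opty}
    &\ge \eta_k\gamma_F\norm{\nxt x - \optx}^2 + \eta_{k+1}\gamma_{G^*}\norm{\nxt y - \opty}^2
    \\
    &\quad + (\eta_k - \eta_{k+1})\iprod{K(\nxt x - \optx)}{\nxt y - \opty} + \eta_k\iprod{\xop(\nxt u, \nxt w) - \xop(\optu, \optw)}{\nxt x - \optx}.
\end{align*}
By the expansion of $Z_k\Xi_k$ in \eqref{eq:convergence:z-xi-expansion}, the first three terms on the right equal $\tfrac12\norm{\nxt v - \optv}^2_{Z_k\Xi_k} + (\gamma_F - \tilde\gamma_F)\eta_k\norm{\nxt x - \optx}^2 + (\gamma_{G^*} - \tilde\gamma_{G^*})\eta_{k+1}\norm{\nxt y - \opty}^2$, leaving only the $\xop$ residual and two non-negative slack terms still to be controlled.

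Since $\linwop$ is affine in $u$ and linear in $(w, x)$ with $\xuop$ its linear-in-$u$ part, I would decompose
\[
    \iprod{\xop(\nxt u, \nxt w) - \xop(\optu, \optw)}{\nxt x - \optx} = \xuop(\nxt u - \optu, \optw; \nxt x - \optx) + \linwop(\optu, \nxt w - \optw; \nxt x - \optx) + \xuop(\nxt u - \optu, \nxt w - \optw; \nxt x - \optx).
\]
Applying the bounds of \cref{assump:convergence:structural}\,\cref{item:convergence:structural:bound-sup,item:convergence:structural:bound} to the three terms and splitting the resulting products of norms by Young's inequality with weights $\epsilon_w, \epsilon_u, \mu > 0$, the residual is lower-bounded by $-\eta_k(\epsilon_u + \epsilon_w)\norm{\nxt x - \optx}^2 - \eta_k[\SupNorm(\optw)/(4\epsilon_w) + C_x\mu/2]\norm{\nxt u - \optu}^2 - \eta_k[\SupNorm(\optu)/(4\epsilon_u) + C_x/(2\mu)]\norm{\nxt w - \optw}^2$, whose coefficients match precisely the deficits in \eqref{eq:convergence:balance0:gammaf}, \eqref{eq:convergence:balance0:gammab}, and \eqref{eq:convergence:balance0:gammab2}.

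The telescoping differences in $\norm{\cdot - \optu}_U^2$ and $\norm{\cdot - \optw}_W^2$ on the right-hand side of \eqref{eq:ineq-Hk-norm-1} are produced by the splitting contractivity \cref{assump:convergence:splitting}\,\cref{item:convergence:splitting:principal}. Rearranged, it reads $\lambda_k\norm{\thisu - \optu}^2 \ge \lambda_k\gamma_B\norm{\nxt u - \optu}^2 - \lambda_k\pi_u\norm{\this x - \optx}^2$, and analogously for $w$ with an extra $-\theta_k C_Q\norm{\nxt u - \optu}^2$ coupling. Substituting these into $-\lambda_k\norm{\thisu - \optu}^2 - \theta_k\norm{\thisw - \optw}^2$ on the right-hand side, the $\pi$-terms in $\norm{\this x - \optx}^2$ cancel exactly against $(\lambda_k\pi_u + \theta_k\pi_w)\norm{\this x - \optx}^2$, and the four conditions \eqref{eq:convergence:balance0:gammaf}--\eqref{eq:convergence:balance0:gammab2} then give precisely the non-negativity of the coefficients of $\norm{\nxt x - \optx}^2, \norm{\nxt y - \opty}^2, \norm{\nxt u - \optu}^2$, and $\norm{\nxt w - \optw}^2$ in the resulting difference.

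The main obstacle will be the contribution of the $u$- and $w$-rows of $H_k$ to the testing pairing: since $H_k^u \in W^*$ and $H_k^w \in U^*$, the canonical pairing with $\nxt u - \optu \in U$ and $\nxt w - \optw \in W$ is not immediate. I expect the proof to exploit that at the algorithmic iterate both residuals vanish after steps \ref{step:alg:pde} and \ref{step:alg:adjoint-pde} of \cref{alg:alg}, so that these contributions drop out and the telescope is imported entirely from \cref{assump:convergence:splitting}. Matching the $\eta_k$-, $\lambda_k$-, and $\theta_k$-scalings so that each of \eqref{eq:convergence:balance0:gammaf}--\eqref{eq:convergence:balance0:gammab2} is invoked exactly once is then the delicate bookkeeping step.
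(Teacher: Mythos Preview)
Your proposal is correct and follows essentially the same route as the paper: the paper too observes that the first two rows of $H_k(\nxt v)$ vanish by construction of the algorithm, expands the remaining $x$- and $y$-rows against the optimality conditions \eqref{eq:algorithm:oc1} using strong monotonicity, identifies the $Z_k\Xi_k$-norm, decomposes the $\xop$ residual into the same three pieces you list, bounds them via \cref{assump:convergence:structural}\,\cref{item:convergence:structural:bound-sup,item:convergence:structural:bound} and Young's inequality with weights $\epsilon_u,\epsilon_w,\mu$, and then invokes \cref{assump:convergence:splitting}\,\cref{item:convergence:splitting:principal} to produce the telescoping $u$- and $w$-terms before applying \eqref{eq:convergence:balance0}. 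Your placement of $\mu$ versus $1/\mu$ in the $C_x$ terms differs from the paper's intermediate display but matches \eqref{eq:convergence:balance0:gammab}--\eqref{eq:convergence:balance0:gammab2} directly; since $\mu>0$ is a free parameter this is immaterial.
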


\begin{proof}
    For brevity we denote $v=(u, w, x, y) \defeq \nxt v$.
    Recall that $\opt v=(\opt u, \opt w, \opt x, \opt y)$ satisfies by \cref{assump:convergence:structural} the optimality conditions \eqref{eq:algorithm:oc1}.
    Since \cref{alg:alg} guarantees the first two lines of $H_k$ to be zero through the choice of $M_k$ in \eqref{eq:algorithm:mk}, introducing $q_F \defeq -\xop(\optu, \optw) -  K^{\star} \opt y \in \partial F(\opt x)$ we expand
    \begin{align*}
        \dualprod{Z_k \Step_k H_k(v)}{v-\opt v}
        &
        =
        \eta_k\iprod{\partial F(x) + \xop(u,w) +  K^{\star} y}{x-\opt x}_X
        + \eta_{k+1}\iprod{\partial G^*(y) -  K x}{y - \opt y}_{Y}
        \\
        &
        =
        \eta_k\iprod{\partial F(x) - q_F}{x-\opt x}_X
        + \eta_k\iprod{\xop(u,w) - \xop(\optu, \optw)}{x-\opt x}_X
        \\
        \MoveEqLeft[-1]
        +\eta_{k+1}\iprod{\partial G^*(y) -  K\opt x}{y- \opt y}_Y
        + (\eta_k-\eta_{k+1})\iprod{ K(x-\opt x)}{y - \opt y}_Y.
    \end{align*}
    Using \eqref{eq:convergence:z-xi-expansion} we also have
    \begin{align*}
        \frac12\norm{v-\opt v}_{Z_k\Xi_k}^2
        &
        = \eta_k\tilde\gamma_{F}\norm{x-\opt x}_X^2
        + (\eta_k-\eta_{k+1})\iprodx{ K(x-\opt x),y - \opt y}_{Y}
        + \eta_{k+1}\tilde\gamma_{G^*}\norm{|y-\opt y}_{Y}^2.
    \end{align*}
    We now use the (strong) monotonicity of $F$ and $G^*$ with constants $ \gamma_F$ and $\gamma_{G^*}$ contained \cref{assump:convergence:structural}\,\cref{item:convergence:structural:convexity}, as well as the splitting inequality \cref{assump:convergence:splitting}\,\cref{item:convergence:splitting:principal}.
    Thus
    \begin{equation}
        \label{eq:ineq-Hk-norm-1:final:0}
        \begin{aligned}[t]
        \dualprod{Z_k\Step_k H_k(v)}{v-\opt v}
        &
        \ge
        \frac{1}{2}\norm{v-\opt v}_{Z_k\Xi_k}^2
        + \eta_k(\gamma_F-\tilde\gamma_F)\norm{x-\opt x}_X^2
        - (\lambda_k\pi_u + \theta_k\pi_w)\norm{\this x - \opt x}_X^2
        \\
        \MoveEqLeft[-1]
        + \eta_{k+1}(\gamma_{G^*}-\tilde\gamma_{G^*})\norm{y-\opt y}^2_{Y}
        + \eta_k\iprod{\xop(u, w)-\xop(\optu, \optw)}{x-\opt x}_X
        \\
        \MoveEqLeft[-1]
        + (\lambda_k \gamma_B-\theta_k C_Q)\norm{u-\opt u}_U^2
        - \lambda_k \norm{\thisu-\optu}_U^2
        \\
        \MoveEqLeft[-1]
        + \theta_k \gamma_B\norm{w-\optw}_W^2
        - \theta_k \norm{\thisw-\optw}_W^2.
        \end{aligned}
    \end{equation}
    The Riesz equivalence \eqref{eq:algorithm:riesz}, affine-linear-linear structure of $\linwop$, \cref{assump:convergence:structural}\,\cref{item:convergence:structural:bound-sup} and \cref{item:convergence:structural:bound}, and Young's inequality give
    \begin{equation}
        \label{eq:ineq-Hk-norm-1:final:2}
        \begin{aligned}[t]
        \eta_k\iprod{\xop(u, w)-\xop(\optu, \optw)}{x-\opt x}_X
        &
        =
        \eta_k\linwop(u, w, x-\optx) - \eta_k\linwop(\optu, \optw, x-\optx)
        \\
        \MoveEqLeft[13]%
        =
        \eta_k\linwop(u, w, x-\optx)
        + \eta_k\linwop(\optu, w - \optw, x-\optx)
        - \eta_k\linwop(\optu, w, x-\opt x)
        \\
        \MoveEqLeft[13]%
        =
        \eta_k \xuop(u - \opt u, w - \optw; x - \opt x)
         + \eta_k \linwop(\opt u, w - \optw; x - \opt x)
         + \eta_k \xuop(u - \opt u, \optw; x - \opt x)
        \\
        \MoveEqLeft[13]
        \ge
        - \eta_k\left(\frac{\SupNorm(\optu)}{4\epsilon_u}+\frac{C_x \mu }{2}\right)\norm{w-\optw}_W^2
        - \eta_k\left(\frac{\SupNorm(\optw)}{4\epsilon_w}+\frac{C_x}{2 \mu }\right)\norm{u - \opt u}_U^2
        - \eta_k(\epsilon_u+\epsilon_w)\norm{x - \opt x}_X^2
        \end{aligned}
    \end{equation}
    Combining \cref{eq:ineq-Hk-norm-1:final:0,eq:ineq-Hk-norm-1:final:2}, we obtain
    \begin{align*}
        \dualprod{Z_k \Step_k H_k(v)}{v-\opt v}
        &
        \ge
        \frac12\norm{v-\opt v}_{Z_k\Xi_k}^2
        + \eta_{k+1}(\gamma_{G^*}-\tilde\gamma_{G^*})\norm{y-\opt y}^2_{Y}
        \\
        \MoveEqLeft[-1]
        + \eta_k(\gamma_F-\tilde\gamma_F-\epsilon_u-\epsilon_w)\norm{x-\opt x}_X^2
        - (\lambda_k\pi_u + \theta_k\pi_w) \norm{\this x-\opt x}_X^2
        \\
        \MoveEqLeft[-1]
        - \lambda_k \norm{\thisu-\optu}_U^2
        + \lambda_k\left(
            \gamma_B
            - \frac{\theta_k}{\lambda_k} C_Q
            - \frac{\eta_k\SupNorm(\bar w)}{4\epsilon_w\lambda_k}
            - \frac{C_x \mu \eta_k}{2\lambda_k}
        \right)\norm{u-\opt u}_U^2
        \\
        \MoveEqLeft[-1]
        - \theta_k \norm{\thisw-\optw}_W^2
        + \theta_k\left(
            \gamma_B
            - \frac{\eta_k\SupNorm(\opt u)}{4\epsilon_u\theta_k}
            - \frac{C_x\eta_k}{2\mu \theta_k}
        \right)\norm{w-\optw}_W^2.
    \end{align*}
    The claim now follows by applying \cref{eq:convergence:balance0}.
\end{proof}

We now simplify and interpret \cref{eq:convergence:balance0}.

\begin{lemma}
    \label{lemma:convergence:simplify-assumption}
    Suppose $\gamma_F > \tilde\gamma_F > 0$ as well as $\gamma_{G^*} \ge \tilde\gamma_{G^*} \ge 0$ and that there exists $\omega, t > 0 $ with $\omega \eta_{k+1} \le \eta_k$ for all $k \in \N$, such that
    \begin{equation}
        \label{eq:convergence:balance}
        \gamma_B
        \ge
        \inv\omega
        + tC_Q
        + \frac{2(1 + \inv t)}{\omega(\gamma_F-\tilde\gamma_F)^2}
        \left(
            \SupNorm(\optu)\pi_w
            + t\SupNorm(\optw)\pi_u
            + \frac{1}{2}\sqrt{t\pi_u\pi_w}C_x(\gamma_F-\tilde\gamma_F)
        \right).
    \end{equation}
    Then there exist $\epsilon_u,\epsilon_w,\mu>0$ and, for all $k \in \N$, $\lambda_k,\theta_k>0$ such that \eqref{eq:convergence:balance0} holds. Moreover
    \begin{equation}
        \label{eq:convergence:balance:lambdathetapi}
        \lambda_k\pi_u + \theta_k\pi_w = \eta_k\omega\frac{\gamma_F-\tilde\gamma_F}{2}.
    \end{equation}
\end{lemma}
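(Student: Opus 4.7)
The plan is to seek $\lambda_k,\theta_k$ as constant multiples of $\eta_k$: set $\lambda_k \defeq a\eta_k$ and $\theta_k \defeq b\eta_k$ for constants $a,b>0$ to be chosen, and impose the sizing condition $a\pi_u + b\pi_w = \omega(\gamma_F-\tilde\gamma_F)/2$, which immediately yields \eqref{eq:convergence:balance:lambdathetapi} for every $k$. Under this ansatz, $\lambda_{k+1}/\lambda_k = \theta_{k+1}/\theta_k = \eta_{k+1}/\eta_k \le 1/\omega$, so each part of \eqref{eq:convergence:balance0} becomes $k$-independent.

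Specifically, \eqref{eq:convergence:balance0:gammag} is the hypothesis $\gamma_{G^*}\ge\tilde\gamma_{G^*}$; \eqref{eq:convergence:balance0:gammaf} reduces, via the sizing condition and $\eta_{k+1}/\eta_k \le 1/\omega$, to $\epsilon_u + \epsilon_w \le (\gamma_F-\tilde\gamma_F)/2$; and \eqref{eq:convergence:balance0:gammab}, \eqref{eq:convergence:balance0:gammab2} simplify to the two time-independent inequalities
\[
    \gamma_B \ge 1/\omega + (b/a)C_Q + \SupNorm(\optw)/(4\epsilon_w a) + C_x\mu/(2a)
\]
and
\[
    \gamma_B \ge 1/\omega + \SupNorm(\optu)/(4\epsilon_u b) + C_x/(2\mu b).
\]

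Writing $\Delta \defeq \gamma_F-\tilde\gamma_F$, I would then fix $b = ta$, so $a = \omega\Delta/(2(\pi_u+t\pi_w))$ and $b = t\omega\Delta/(2(\pi_u+t\pi_w))$; this aligns $(b/a)C_Q$ with the $tC_Q$-term of \eqref{eq:convergence:balance}. The remaining free parameters are chosen to match \eqref{eq:convergence:balance} term-by-term: taking $\mu = 1/\sqrt{t}$ balances the two $C_x$-contributions into a common value proportional to $(\pi_u+t\pi_w)/\sqrt{t}$, and splitting $\epsilon_w = \Delta/(2(t+1))$, $\epsilon_u = t\Delta/(2(t+1))$ saturates $\epsilon_u+\epsilon_w = \Delta/2$ while ensuring that each $\SupNorm$-contribution is controlled by the matching $\SupNorm$-term on the right-hand side of \eqref{eq:convergence:balance}.

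The main obstacle is the algebraic verification that, after these substitutions, the two displayed inequalities are consequences of \eqref{eq:convergence:balance}. This reduces to AM-GM type estimates for $(\pi_u+t\pi_w)$ and the identity $(1+\inv t)\sqrt{t\pi_u\pi_w} = (t+1)\sqrt{\pi_u\pi_w}/\sqrt{t}$; the factor $2(1+\inv t)$ appearing in \eqref{eq:convergence:balance} provides precisely the slack needed to absorb the $(\pi_u+t\pi_w)$-style coefficients in the $k$-independent bounds into the $(t+1)\pi_u$, $(t+1)\pi_w/t$, and $(t+1)\sqrt{\pi_u\pi_w}/\sqrt{t}$-style coefficients on the right. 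Assembling these estimates closes the argument, and \eqref{eq:convergence:balance:lambdathetapi} is immediate from the sizing condition on $(a,b)$.
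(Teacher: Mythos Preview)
Your overall architecture is right and matches the paper: take $\lambda_k=a\eta_k$, $\theta_k=b\eta_k$, impose the sizing condition $a\pi_u+b\pi_w=\omega(\gamma_F-\tilde\gamma_F)/2$, and reduce \eqref{eq:convergence:balance0} to two $k$-independent inequalities. The gap is in your specific choice $b=ta$ (together with $\mu=1/\sqrt t$ and $\epsilon_u,\epsilon_w$ independent of $\pi_u,\pi_w$): the ``AM--GM type estimates'' you invoke go the wrong way.

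Concretely, with $a=\omega\Delta/(2(\pi_u+t\pi_w))$ and $\mu=1/\sqrt t$, the $C_x$-term in your first displayed inequality is
\[
    \frac{C_x\mu}{2a}=\frac{C_x(\pi_u+t\pi_w)}{\sqrt t\,\omega\Delta},
\]
whereas the $C_x$-portion of the right-hand side of \eqref{eq:convergence:balance} equals $(t+1)\sqrt{\pi_u\pi_w}\,C_x/(\sqrt t\,\omega\Delta)$. You would therefore need $\pi_u+t\pi_w\le(t+1)\sqrt{\pi_u\pi_w}$, i.e.\ $(\sqrt{\pi_u}-\sqrt{\pi_w})(\sqrt{\pi_u}-t\sqrt{\pi_w})\le 0$, which is false in general (take $\pi_u=1$, $\pi_w=4$, $t=1$: then $5\le 4$). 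The same obstruction appears in the $\SupNorm(\optw)$-term, where your choice forces $(t+1)(\pi_u+t\pi_w)\le 2(t+1)\pi_u$, i.e.\ $t\pi_w\le\pi_u$. AM--GM gives $\pi_u+t\pi_w\ge 2\sqrt{t\pi_u\pi_w}$, a lower bound, not the upper bound you need.

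The remedy, which is what the paper does, is to let the ratio $\theta_k/\lambda_k$ carry the $\pi$-dependence: take $\lambda_k=\inv t r\inv\pi_u\eta_k$ and $\theta_k=r\inv\pi_w\eta_k$ with $r=\omega\Delta/(2(\inv t+1))$, and set $\mu=(t\pi_u/\pi_w)^{-1/2}$. Then the $C_x$-terms in both inequalities become exactly $\sqrt{t\pi_u\pi_w}\,C_x/(2r)$, matching the $C_x$-term of \eqref{eq:convergence:balance} with equality rather than via an inequality. Likewise, choosing $\epsilon_u,\epsilon_w$ proportional to $\SupNorm(\optu)$ and $t\SupNorm(\optw)$ (summing to $\Delta/2$) makes the $\SupNorm$-contributions line up. The sizing identity \eqref{eq:convergence:balance:lambdathetapi} still holds with these choices since $\lambda_k\pi_u+\theta_k\pi_w=r(\inv t+1)\eta_k=\eta_k\omega\Delta/2$.
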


\begin{proof}
    We take
    \begin{equation}
        \label{eq:convergence:balance:lambdathetadef}
        \lambda_k \defeq \inv t r \inv\pi_u \eta_k
        \quad\text{and}\quad
        \theta_k \defeq r \inv\pi_w \eta_k
        \quad\text{for}\quad
        r \defeq \frac{(\gamma_F-\tilde\gamma_F)\omega}{2(\inv t + 1)}
        \quad\text{and}\quad
        c_k \defeq \frac{\eta_{k+1}}{\eta_k}.
    \end{equation}
    These expressions readily give \eqref{eq:convergence:balance:lambdathetapi}.
    We then take $\mu \defeq (t\pi_u/\pi_w)^{-1/2}$,
    \[
        \epsilon_u
        \defeq
        \frac{\SupNorm(\optu)}{\SupNorm(\optu) + t\SupNorm(\optw)}
        \frac{\gamma_F - \tilde\gamma_F}{2},
        \quad\text{and}\quad
        \epsilon_w
        \defeq
        \frac{t\SupNorm(\optw)}{\SupNorm(\optu) + t\SupNorm(\optw)}
        \frac{\gamma_F - \tilde\gamma_F}{2}.
    \]
    Since both
    \[
        \frac{\lambda_{k+1}\pi_u + \theta_{k+1}\pi_w}{\eta_k}
        =c_kr(\inv t + 1)
        =c_k\omega\frac{\gamma_F-\tilde\gamma_F}{2}
        \le \frac{\gamma_F-\tilde\gamma_F}{2}
    \]
    and
    $
         \epsilon_u + \epsilon_w = (\gamma_F-\tilde\gamma_F)/2,
    $
    \eqref{eq:convergence:balance0:gammaf} is readily verified, while \eqref{eq:convergence:balance0:gammag} we have assumed.
    Inserting $\lambda_k,\theta_k,\eta_k$, and $\mu$, we also rewrite \eqref{eq:convergence:balance0:gammab} and \eqref{eq:convergence:balance0:gammab2} as
    \begin{align*}
        \gamma_B
        \ge
        c_k
        + t C_Q
        + \frac{t\SupNorm(\optw)\pi_u}{4\epsilon_w r}
        + \frac{\sqrt{t\pi_u\pi_w}C_x}{2 r}
        \quad\text{and}
        \quad
        \gamma_B
        \ge
        c_k
        + \frac{\SupNorm(\opt u)\pi_w}{4\epsilon_u r}
        + \frac{\sqrt{t\pi_u\pi_w}C_x}{2 r}.
    \end{align*}
    After also inserting $\epsilon_u, \epsilon_w$, and $r$, and using $\omega c_k \le 1$, these are readily verified by \eqref{eq:convergence:balance}.
\end{proof}

\begin{remark}
    Since $\eta_{k+1} \ge \eta_k$ for convergent algorithms, i.e., $\inv\omega \ge 1$, letting $\omega=1$ and $\tilde\gamma_F=0$ in \eqref{eq:convergence:balance}, we obtain at the solution $(\opt u, \opt w, \opt x, \opt y)$ a fundamental “second order growth” and splitting condition (via $C_Q$, $\pi_u$, and $\pi_w$) that cannot be avoided by step length parameter choices.
\end{remark}

Our convergence proof is based based on the next Féjer-type monotonicity estimate with respect to the iteration-dependent norms $\norm{\freevar}_{Z_k\tilde M_k}$.
Here $\tilde M_k \in \linear(U \times W \times X \times Y; U^* \times W^* \times X \times Y)$ modifies $M_k$ defined in \eqref{eq:algorithm:mk} as
\begin{equation}
	\label{eq:convergence:mkt}
    \tilde M_k \defeq M_k + \diag\begin{pmatrix}
        \Inj_U
        & \Inj_W
        &\inv\phi_k (\lambda_k\pi_u + \theta_k\pi_w) \Id_X
        & 0
    \end{pmatrix}.
\end{equation}
By \eqref{eq:convergence:zm-expansion} and \cref{assump:convergence:testing}, this satisfies
\begin{equation}
    \label{eq:convergence:zktildemk}
    Z_k\tilde M_k = \begin{pmatrix}
        \lambda_k \Inj_U \\
        & \theta_k \Inj_W \\
        & & (\varphi_k+\lambda_k\pi_u + \theta_k\pi_w) \Inj_X & -\eta_k \Inj_X K^{\star} \\
        & & -\eta_k \Inj_Y K & \psi_{k+1}\Inj_Y
    \end{pmatrix}.
\end{equation}

\begin{lemma}
    \label{lemma:convergence:main-estimate}
    Suppose \cref{assump:convergence:structural,assump:convergence:testing} hold as does \cref{assump:convergence:splitting} and \eqref{eq:convergence:balance0} for $k=0,\ldots,N$.
    Given $v^0$, let $v^1,\ldots,v^{N-1}$ be produced by \cref{alg:alg}.
    Then
    \begin{equation}
        \label{eq:convergence:quantitative-fejer}
        \frac12\norm{\nxt v-\opt v}_{Z_{k+1}\tilde M_{k+1}}^2
        + \frac12\norm{\nxt v - \thisv}_{Z_kM_k}^2
        \leq
        \frac12\norm{\this v - \opt v}_{Z_k\tilde M_k}^2
        \quad (k=0,\ldots,N-1)
    \end{equation}
    where all the terms are non-negative.
\end{lemma}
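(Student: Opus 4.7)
The plan is to follow the testing methodology and pair the implicit form \eqref{eq:algorithm:implicit} of the algorithm with the functional $\dualprod{Z_k(\freevar)}{\nxt v - \opt v}$. This yields
\[
    \dualprod{Z_k \Step_k H_k(\nxt v)}{\nxt v - \opt v} + \dualprod{Z_k M_k (\nxt v - \this v)}{\nxt v - \opt v} = 0.
\]
On the second summand I would apply the three-point identity \eqref{eq:pythagoras}, which is permissible because the expansion \eqref{eq:convergence:zm-expansion} shows that $Z_k M_k$ is self-adjoint and, by \cref{lemma:convergence:zm-lower-bound}, non-negative. On the first summand I would invoke \cref{lem:ineq-Hk-norm-1} to produce a lower bound involving $\frac12 \norm{\nxt v - \opt v}_{Z_k \Xi_k}^2$ together with the telescoping differences between $\lambda_{k+1}\norm{\nxt u - \opt u}_U^2$ and $\lambda_k \norm{\thisu - \optu}_U^2$, and the analogous $W$- and $X$-norm terms with coefficients $\theta_\bullet$ and $\lambda_\bullet\pi_u + \theta_\bullet\pi_w$.

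Next I would combine the two bounds, absorbing $\frac12\norm{\nxt v - \opt v}_{Z_k M_k}^2 + \frac12\norm{\nxt v - \opt v}_{Z_k\Xi_k}^2 = \frac12\norm{\nxt v - \opt v}_{Z_k(M_k + \Xi_k)}^2$. The key algebraic move is to rewrite $Z_k(M_k+\Xi_k) = Z_{k+1}M_{k+1} + D_{k+1}$ via \eqref{eq:convergence:z-xi-def}; because $D_{k+1}$ is skew-symmetric it contributes nothing to the associated quadratic form, so $\norm{\freevar}_{Z_k(M_k+\Xi_k)}^2 = \norm{\freevar}_{Z_{k+1}M_{k+1}}^2$. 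The definition \eqref{eq:convergence:mkt} of $\tilde M_k$, whose $Z$-scaled expansion is recorded in \eqref{eq:convergence:zktildemk}, has been engineered precisely so that the telescoping $\lambda_\bullet$, $\theta_\bullet$, and $(\lambda_\bullet\pi_u + \theta_\bullet\pi_w)$ contributions are exactly what is needed to upgrade the $Z_k M_k$-norm of $\this v - \opt v$ on the right and the $Z_{k+1}M_{k+1}$-norm of $\nxt v - \opt v$ on the left into their $\tilde M$-norm counterparts. Assembling these observations yields \eqref{eq:convergence:quantitative-fejer}.

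Non-negativity of every term in \eqref{eq:convergence:quantitative-fejer} is then immediate: $Z_k M_k \ge 0$ by \cref{lemma:convergence:zm-lower-bound}, while $Z_k \tilde M_k$ differs from $Z_k M_k$ only by the diagonal blocks $\lambda_k \Inj_U$, $\theta_k \Inj_W$, and $(\lambda_k\pi_u + \theta_k\pi_w) \Inj_X$ evident in \eqref{eq:convergence:zktildemk}, all of which are non-negative. The main bookkeeping obstacle will be verifying that the telescoping coefficients emerging at the $k+1$ index from \cref{lem:ineq-Hk-norm-1} line up, with the correct scaling, with the diagonal adjustments chosen in \eqref{eq:convergence:mkt}; this is a direct substitution, but it is precisely where the particular form of the modification $\tilde M_k - M_k$ is justified, and it is the only nontrivial piece of the derivation beyond invoking the three-point identity and \cref{lem:ineq-Hk-norm-1}.
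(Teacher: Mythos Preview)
Your proposal is correct and follows essentially the same approach as the paper: test the implicit inclusion \eqref{eq:algorithm:implicit} with $Z_k$, apply the three-point identity \eqref{eq:pythagoras} to the $Z_kM_k$-term, invoke \cref{lem:ineq-Hk-norm-1} for the growth estimate, use \eqref{eq:convergence:z-xi-def} to pass from $Z_k(M_k+\Xi_k)$ to $Z_{k+1}M_{k+1}$ via the skew-symmetric $D_{k+1}$, and recognise the telescoping $\lambda$-, $\theta$-, and $(\lambda\pi_u+\theta\pi_w)$-terms as precisely $Z_\bullet(\tilde M_\bullet - M_\bullet)$. Non-negativity is handled identically via \cref{lemma:convergence:zm-lower-bound} and the non-negative diagonal correction in \eqref{eq:convergence:zktildemk}.
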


\begin{proof}
    \Cref{lem:ineq-Hk-norm-1} gives the estimate
    \begin{equation}
        \label{eq:convergence:estimate1}
        \begin{aligned}[t]
        \dualprod{Z_k \Step_k H_k(\nxt v)}{\nxt v-\opt v}
        &
        \ge
        \frac12\norm{\nxt v-\opt v}_{Z_k\Xi_k}^2
        \\
        \MoveEqLeft[-1]
        + (\lambda_{k+1}\pi_u +  \theta_{k+1}\pi_w) \norm{\nxt x-\opt x}_X^2
        - (\lambda_k\pi_u + \theta_k\pi_w) \norm{\this x-\opt x}_X^2
        \\
        \MoveEqLeft[-1]
        + \lambda_{k+1} \norm{\nxt u-\opt u}_U^2
        - \lambda_k \norm{\thisu-\optu}_U^2
        \\
        \MoveEqLeft[-1]
        + \theta_{k+1} \norm{\nxt w-\optw}_W^2
        - \theta_k \norm{\thisw-\optw}_W^2
        \\
        &
        =
        \frac12\norm{\nxt v-\opt v}_{Z_{k+1}(\tilde M_{k+1}-M_{k+1})+Z_k\Xi_k}^2
        - \frac12\norm{\this v-\opt v}_{Z_k(\tilde M_k-M_k)}^2.
        \end{aligned}
    \end{equation}
    By the implicit form \eqref{eq:algorithm:implicit} of \cref{alg:alg}, we have
    $-Z_k M_k(\nxt v-\this v) \in Z_k \Step_k H_k(\nxt v)$.
    Thus \eqref{eq:convergence:estimate1} combined with the three-point identity \eqref{eq:pythagoras} for the operator $M=Z_kM_k$ yields
    \begin{align*}
        \frac12\norm{\this v-\opt v}_{Z_k\tilde M_k}^2
        &
        \ge
        \frac12\norm{\nxt v-\opt v}_{Z_{k+1}(\tilde M_{k+1}-M_{k+1})+Z_k(M_k+\Xi_k)}^2
        +\frac12\norm{\nxt v - \thisv}_{Z_kM_k}^2
    \end{align*}
    Therefore \eqref{eq:convergence:quantitative-fejer} follows by applying \eqref{eq:convergence:z-xi-def}, i.e., $Z_k(M_k+\Xi_k)=Z_{k+1}M_{k+1}+D_k$, where the skew symmetric term $D_k$ does not contribute to the norms.
    Finally, we have $Z_k\tilde M_k \ge Z_kM_k \ge 0$ by \cref{lemma:convergence:zm-lower-bound}, proving the non-negativity of all the terms.
\end{proof}

\subsection{Main results}
\label{sec:convergence:main}

We can now state our main convergence theorems.
In terms of assumptions, the only fundamental difference between the accelerated $O(1/N)$ and the linear convergence result is that the latter requires $G^*$ to be strongly convex and the former doesn't. Both require sufficient second order growth in terms of the respective technical conditions \cref{eq:convergence:balance-accel:2} or \cref{eq:convergence:balance-linear:2}. The step length parameters differ.

\begin{theorem}[Accelerated convergence]
    \label{thm:convergence:accel}
    Suppose \cref{assump:convergence:structural,assump:convergence:splitting} hold with $\gamma_F>0$.
    Put $\tilde\gamma_{G^*}=0$ and pick $\tau_0,\sigma_0,\kappa,t>0$ and $0 < \tilde\gamma_F < \gamma_F$ satisfying
    \begin{subequations}
    \label{eq:convergence:balance-accel}
    \begin{align}
        \label{eq:convergence:balance-accel:0}
        1 &> \kappa \ge \tau_0\sigma_0\norm{K}^2
        \quad\text{and}\quad
        \\
        \label{eq:convergence:balance-accel:2}
        \gamma_B
        &
        \ge
        \inv\omega_0
        + tC_Q
        + \frac{2(1 + \inv t)}{\omega_0(\gamma_F-\tilde\gamma_F)^2}
        \left(
            \SupNorm(\optu)\pi_w
            + t\SupNorm(\optw)\pi_u
            + \frac{1}{2}\sqrt{t\pi_u\pi_w}C_x(\gamma_F-\tilde\gamma_F)
        \right),
    \intertext{where $\omega_0$ is defined as part of the update rules}
        \nonumber
        \tau_{k+1} & \defeq \tau_k\omega_k,
        \quad
        \sigma_{k+1} \defeq \sigma_k/\omega_k,
        \quad\text{and}\quad
        \omega_k \defeq 1/\sqrt{1+2\tilde\gamma_F\tau_k}
        \quad (k \in \N).
    \end{align}
    \end{subequations}
    Let $\{\nxt v\}_{k \in \N}$ be generated by \cref{alg:alg} for any $v^0 \in U \times W \times X \times Y$.
    Then $\this x \to \opt x$ in $X$; $\thisu \to \optu$ in $U$; and $\thisw \to \optw$ in $W$, all strongly at the rate $O(1/N)$.
\end{theorem}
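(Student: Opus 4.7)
The plan is to verify that the specified step length update rules satisfy \cref{assump:convergence:testing}, that \eqref{eq:convergence:balance-accel:2} yields the balance condition \eqref{eq:convergence:balance0} via \cref{lemma:convergence:simplify-assumption}, and then to invoke \cref{lemma:convergence:main-estimate} and transfer the resulting Fejér-type estimate to a rate using the growth of the testing parameters.

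I would first choose $\varphi_0, \psi_0$ with $\eta_0 \defeq \varphi_0\tau_0 = \psi_0\sigma_0$ and propagate $\varphi_{k+1} = \varphi_k(1+2\tilde\gamma_F\tau_k)$ and $\psi_{k+1} = \psi_k$ (consistent with $\tilde\gamma_{G^*}=0$). A direct calculation using $\omega_k^2(1+2\tilde\gamma_F\tau_k)=1$ gives $\eta_{k+1}/\eta_k = 1/\omega_k$ and $\tau_k\sigma_k = \tau_0\sigma_0$, so with $\tilde\gamma_{G^*}=0$ the condition \eqref{eq:convergence:balance-accel:0} encodes exactly the required bound $\kappa \ge \tau_k\sigma_k\norm{K}^2/(1+2\tilde\gamma_{G^*}\sigma_k)$ uniformly in $k$. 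Thus \cref{assump:convergence:testing} holds.

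Next, since $\tau_k$ is non-increasing (as $\omega_k \in (0,1]$), the map $k \mapsto \omega_k = 1/\sqrt{1+2\tilde\gamma_F\tau_k}$ is non-decreasing, so $\omega_0 \le \omega_k$ and hence $\omega_0\,\eta_{k+1} \le \eta_k$ for all $k$. This is precisely the hypothesis of \cref{lemma:convergence:simplify-assumption} with $\omega = \omega_0$, and \eqref{eq:convergence:balance-accel:2} is literally the condition \eqref{eq:convergence:balance}. That lemma therefore produces $\lambda_k, \theta_k, \epsilon_u, \epsilon_w, \mu > 0$ verifying \eqref{eq:convergence:balance0} for every $k \in \N$, with the explicit scaling $\lambda_k, \theta_k = \Theta(\eta_k)$ dictated by \eqref{eq:convergence:balance:lambdathetadef}.

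With all hypotheses in place, \cref{lemma:convergence:main-estimate} applies, and iterating the Fejér estimate \eqref{eq:convergence:quantitative-fejer} yields
\[
    \tfrac12\norm{\nxt v - \opt v}^2_{Z_{k+1}\tilde M_{k+1}} \le \tfrac12\norm{v^0 - \opt v}^2_{Z_0\tilde M_0} \quad (k \in \N).
\]
By \eqref{eq:convergence:zktildemk} combined with \cref{lemma:convergence:zm-lower-bound}, the left-hand norm bounds below $\lambda_{k+1}\norm{\nxt u-\optu}_U^2 + \theta_{k+1}\norm{\nxt w-\optw}_W^2 + \varphi_{k+1}(1-\kappa)\norm{\nxt x-\optx}_X^2$. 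The update $\varphi_{k+1}=\varphi_k(1+2\tilde\gamma_F\tau_k)$ with $\tau_{k+1}=\tau_k\omega_k$ is the classical Chambolle–Pock acceleration, for which $\varphi_N = \Omega(N^2)$ and $\eta_N = \Omega(N)$ is standard (see, e.g., \cite{chambolle2011first} or \cite[Ch.~9]{clasonvalkonen2020nonsmooth}); since $\lambda_N, \theta_N = \Theta(\eta_N)$, dividing the bound by $\varphi_N$, $\lambda_N$, $\theta_N$ respectively yields the claimed strong convergence at the advertised rate. The main obstacle is not conceptual but bookkeeping: carefully matching \eqref{eq:convergence:balance-accel:2} to \eqref{eq:convergence:balance} with $\omega=\omega_0$ and checking that the uniform choice of $\lambda_k, \theta_k$ produced by \cref{lemma:convergence:simplify-assumption} is consistent with the $k$-dependent balance \eqref{eq:convergence:balance0} via the bound $\omega_0\eta_{k+1}/\eta_k \le 1$ exploited in its proof.
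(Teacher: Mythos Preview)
Your proposal is correct and follows essentially the same route as the paper: verify \cref{assump:convergence:testing} for the accelerated parameters, use $\omega_0\eta_{k+1}\le\eta_k$ (from $\tau_k$ non-increasing) to feed \eqref{eq:convergence:balance-accel:2} into \cref{lemma:convergence:simplify-assumption} with $\omega=\omega_0$, apply \cref{lemma:convergence:main-estimate}, telescope, and read off rates via \cref{lemma:convergence:zm-lower-bound} and \eqref{eq:convergence:zktildemk}. One bookkeeping remark: your computation $\lambda_N,\theta_N=\Theta(\eta_N)=\Omega(N)$ is in fact sharper than the paper's assertion that these grow like $\phi_N=\Omega(N^2)$ (indeed $\eta_k^2/\phi_k$ is constant under the update rules), and it only yields $\norm{u^N-\optu}_U,\norm{w^N-\optw}_W=O(1/\sqrt N)$; the advertised $O(1/N)$ rate strictly applies to $x$, so this is a slip in the theorem's statement rather than a defect in your argument.
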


\begin{proof}
    We use \cref{lemma:convergence:main-estimate}, whose assumptions we now verify.
    \Cref{assump:convergence:structural,assump:convergence:splitting} we have assumed.
    As shown in \cite{tuomov-proxtest,clasonvalkonen2020nonsmooth}, \cref{assump:convergence:testing} holds with $\psi_k \equiv \inv\sigma_0\tau_0$,$\phi_0=1$, and $\phi_{k+1} \defeq \phi_k/\omega_k^2$. Moreover, $\{\varphi_k\}_{k \in \N}$ grows at the rate $\Omega(k^2)$. Hence
    \[
        \eta_{k+1} = \inv\omega_k\eta_k = \sqrt{1+2\tilde\gamma_F\tau_k}\eta_k \le \inv\omega_0\eta_k
        \quad\text{for}\quad
        \inv\omega_0 = \sqrt{1+2\tilde\gamma_F\tau_0}.
    \]
    Thus \eqref{eq:convergence:balance-accel} verifies \eqref{eq:convergence:balance} so that  \cref{lemma:convergence:simplify-assumption} verifies \eqref{eq:convergence:balance0}.
    Thus we may apply \cref{lemma:convergence:main-estimate}.
    By summing its result over $k=0,\ldots,N-1$, we get
    \begin{equation}
        \label{eq:convergence:summed-estimate}
        \frac12\norm{v^N-\opt v}_{Z_{N}\tilde M_{N}}^2
        \le
        \frac12\norm{v^0 - \opt v}_{Z_0\tilde M_0}^2.
    \end{equation}

    By \eqref{eq:convergence:zm-expansion}, \eqref{eq:convergence:zktildemk}, and \cref{lemma:convergence:zm-lower-bound} we have
    \begin{equation}
        \label{eq:convergence:zktildemk-estim}
        Z_k\tilde M_k \ge Z_k M_k \ge
        \diag\begin{pmatrix}
            \lambda_k \Inj_U &
            \theta_k \Inj_W &
            \phi_k (1-\kappa)\Inj_X &
            \psi_{k+1}\epsilon\Inj_Y
        \end{pmatrix}
        \ge 0.
    \end{equation}
    where $\epsilon \defeq 1-\tau_k\sigma_k\inv\kappa\norm{K}^2= 1-\tau_0\sigma_0\inv\kappa\norm{K}^2>0$ by assumption.
    By \cref{lemma:convergence:simplify-assumption}, $\{\lambda_k\}_{k \in \N}$ and $\{\theta_k\}_{k \in \N}$ grow at the same $\Omega(k^2)$ rate as $\{\phi_k\}_{k \in \N}$.
    Therefore \eqref{eq:convergence:summed-estimate} and \eqref{eq:convergence:zktildemk-estim} establish $\norm{\thisx - \optx}_X^2 \to 0$ as well as $\norm{\thisu-\optu}_U^2$ and $\norm{\thisw - \optw}_W^2 \to 0$, all at the rate $O(1/N^2)$. The claim follows by removing the squares.
\end{proof}

\begin{theorem}[Linear convergence]
    \label{thm:convergence:linear}
    Suppose \cref{assump:convergence:structural,assump:convergence:splitting} hold with both $\gamma_F>0$ and $\gamma_{G^*}>0$.
    Pick $\tau,\kappa,t>0$, $0 < \tilde\gamma_{F} \le \gamma_{F}$, $0 < \tilde\gamma_{G^*} \le \gamma_{G^*}$ satisfying
    \begin{subequations}
    \label{eq:convergence:balance-linear}
    \begin{align}
        \label{eq:convergence:balance-linear:0}
        1 & > \kappa \ge \tau^2\inv{\tilde\gamma_{G^*}}\tilde\gamma_F\norm{K}^2
        \quad\text{and}\quad
        \\
        \label{eq:convergence:balance-linear:2}
        \gamma_B
        &
        \ge
        \inv\omega
        + tC_Q
        + \frac{2(1 + \inv t)}{\omega(\gamma_F-\tilde\gamma_F)^2}
        \left(
            \SupNorm(\optu)\pi_w
            + t\SupNorm(\optw)\pi_u
            + \frac{1}{2}\sqrt{t\pi_u\pi_w}C_x(\gamma_F-\tilde\gamma_F)
        \right)
    \shortintertext{for}
        \nonumber
        \sigma & \defeq \inv{\tilde\gamma_{G^*}}\tilde\gamma_F\tau
        \quad\text{and}\quad
        \omega \defeq 1/(1+2\tilde\gamma_F\tau)=1/(1+\tilde\gamma_{G^*}\sigma).
    \end{align}
    \end{subequations}
    Take $\tau_k \equiv \tau$, $\sigma_k \equiv \sigma$, and $\omega_k \equiv \omega$.
    Let $\{\nxt v\}_{k \in \N}$ be generated by \cref{alg:alg} for any $v^0 \in U \times W \times X \times Y$.
    Then $\this x \to \opt x$ in $X$; $\thisu \to \optu$ in $U$; and $\thisw \to \optw$ in $W$, all strongly at a linear rate.
\end{theorem}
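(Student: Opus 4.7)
The plan is to mirror the proof of \cref{thm:convergence:accel} with constant step parameters in place of accelerated ones. The key observation is that with $\tau_k \equiv \tau$, $\sigma_k \equiv \sigma$, and $\omega_k \equiv \omega$, the testing parameters $\phi_k$ and $\psi_k$—and hence $\eta_k$—grow geometrically like $\inv\omega^k$ rather than polynomially, which yields a linear convergence rate instead of the accelerated $O(1/N)$ rate.

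The first step is to verify \cref{assump:convergence:testing}. The choice $\sigma = \inv{\tilde\gamma_{G^*}}\tilde\gamma_F\tau$ ensures $1+2\tilde\gamma_F\tau = 1+2\tilde\gamma_{G^*}\sigma$, so initialising $\phi_0,\psi_0>0$ with $\phi_0\tau = \psi_0\sigma$ and applying the update rules from \cref{assump:convergence:testing} preserves $\eta_k \defeq \phi_k\tau = \psi_k\sigma$, with both $\phi_k$ and $\psi_k$ scaling as $\inv\omega^k$ times a constant. Correspondingly $\omega_k = \eta_k/\eta_{k+1} = \omega$ is the constant prescribed. The remaining clause $\kappa \ge \tau_k\sigma_k\norm{K}^2/(1+2\tilde\gamma_{G^*}\sigma_k)$ is implied by \eqref{eq:convergence:balance-linear:0}: substituting $\tau\sigma = \tilde\gamma_F\tau^2/\tilde\gamma_{G^*}$ and bounding the denominator from below by $1$ gives exactly the stated bound on $\kappa$.

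With \cref{assump:convergence:testing} in hand, I would then invoke \cref{lemma:convergence:simplify-assumption}: its hypothesis \eqref{eq:convergence:balance} is precisely \eqref{eq:convergence:balance-linear:2}, so \eqref{eq:convergence:balance0} holds for every $k$, and the lemma additionally produces $\lambda_k,\theta_k$ proportional to $\eta_k$, hence also growing geometrically as $\inv\omega^k$. All hypotheses of \cref{lemma:convergence:main-estimate} are therefore in place.

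Iterating the quantitative Féjer estimate \eqref{eq:convergence:quantitative-fejer} from $k=0$ to $N-1$ and discarding the nonnegative middle term $\tfrac12\norm{\nxt v - \this v}_{Z_k M_k}^2$ yields
\[
    \tfrac12\norm{v^N-\opt v}_{Z_N\tilde M_N}^2
    \le
    \tfrac12\norm{v^0-\opt v}_{Z_0\tilde M_0}^2.
\]
Combining the lower bound $Z_N\tilde M_N \ge \diag(\lambda_N \Inj_U,\,\theta_N \Inj_W,\,\phi_N(1-\kappa)\Inj_X,\,\psi_{N+1}\epsilon\Inj_Y)$ supplied by \cref{lemma:convergence:zm-lower-bound} (with $\epsilon = 1-\inv\kappa\tau\sigma\norm{K}^2/(1+2\tilde\gamma_{G^*}\sigma)>0$) with the fact that $\lambda_N,\theta_N,\phi_N,\psi_{N+1}$ all behave as constants times $\inv\omega^N$, each of the squared quantities $\norm{u^N-\opt u}_U^2$, $\norm{w^N-\opt w}_W^2$, $\norm{x^N-\opt x}_X^2$ is bounded by a constant multiple of $\omega^N$. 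Taking square roots delivers the linear rate. The only real delicacy is the parameter bookkeeping: verifying that $\phi_k$ and $\psi_k$ can be synchronised via $\sigma = \tilde\gamma_F\tau/\tilde\gamma_{G^*}$ so that $\eta_k$ is well-defined as a single sequence, and checking that the $\kappa$-condition of \cref{assump:convergence:testing} follows from \eqref{eq:convergence:balance-linear:0}; the remainder is a direct transfer from the accelerated case.
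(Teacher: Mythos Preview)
Your proposal is correct and follows essentially the same route as the paper's proof: verify \cref{assump:convergence:testing} with the constant step lengths and geometrically growing testing parameters, invoke \cref{lemma:convergence:simplify-assumption} to obtain \eqref{eq:convergence:balance0}, apply \cref{lemma:convergence:main-estimate}, sum the Féjer estimate, and extract the linear rate from the exponential growth of $\phi_k,\psi_k,\lambda_k,\theta_k$ via \cref{lemma:convergence:zm-lower-bound}. The paper is terser (it simply states the testing-parameter choices and defers the remainder to the proof of \cref{thm:convergence:accel}), but your more explicit bookkeeping of the $\kappa$-condition and the synchronisation $\phi_0\tau=\psi_0\sigma$ matches what is implicitly used there.
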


\begin{proof}
    As shown in \cite{tuomov-proxtest,clasonvalkonen2020nonsmooth}, \cref{assump:convergence:testing} is satisfied for $\varphi_0=1$, $\psi_0=\inv\sigma\tau$, $\varphi_{k+1} \defeq \varphi_k/\omega_k$, and $\psi_{k+1} \defeq \psi_k/\omega_k$.
    Moreover, both $\{\phi_k\}_{k \in \N}$ and $\{\psi_k\}_{k \in \N}$ grow exponentially and
    $
        \eta_{k+1} \le \inv\omega\eta_k.
    $
    Thus \eqref{eq:convergence:balance-linear} verifies \eqref{eq:convergence:balance} with $c=\inv\omega$ so that  \cref{lemma:convergence:simplify-assumption} verifies \eqref{eq:convergence:balance0}.
    The rest follows as in the proof of \cref{thm:convergence:accel}.
\end{proof}

\Cref{thm:convergence:accel,thm:convergence:linear} show global convergence, but may require a very constricted $\Dom F$ through the constant $C_x$ in \cref{assump:convergence:structural}\,\cref{item:convergence:structural:bound}. In \cref{sec:localisation} we relax the constant by localizing the convergence.

\begin{remark}[Linear and sufficiently linear PDEs]
    \label{rem:linear-pde-condition}
    For linear PDEs, i.e., when $\linwop$ does not depend on $u$, we have $C_x=0$ and $\SupNorm(\opt w)=0$, as observed in \cref{rem:convergence:structural}.
    Moreover, for typical solvers for the adjoint PDE, we would have $\pi_w=0$, as $\uop$ does not then depend on $x$.
    In that case, by taking $t \downto 0$, \eqref{eq:convergence:balance-accel:2} (and likewise \eqref{eq:convergence:balance-linear:2}) reduces to $\gamma_B > \inv\omega_0$.
    Practically this means that the convergence rate factor $\inv\omega_0$ has to be bounded by the inverse contractivity factor $\gamma_B$ of the linear system solver.
    If $\gamma_B>1$, as we should have, this condition can be satisfied by suitable choices of $\tilde\gamma_F \in (0, \gamma_F]$ and $\tilde\gamma_{G^*}$.
    By extension then, the conditions \eqref{eq:convergence:balance-accel:2} and \eqref{eq:convergence:balance-linear:2} are satisfiable for small $t$ when the PDE is “sufficiently linear”.
\end{remark}

\begin{remark}[Weak convergence]
    \label{rem:convergence:weak}
    It is possible to prove weak convergence when $\omega \equiv 1$ and $\tau \equiv \tau_0$, $\sigma \equiv \sigma_0$ satisfy \eqref{eq:convergence:balance-accel}.
    The proof is based on an extension of Opial's lemma to the quantitative Féjer monotonicity \eqref{eq:convergence:quantitative-fejer}.
    We have not included the proof since it is technical, and does not permit reducing assumptions from those of \cref{thm:convergence:accel,thm:convergence:linear}.
    We refer to \cite{tuomov-nlpdhgm-redo} for the corresponding proof for the NL-PDPS.
\end{remark}

\section{Splittings and partial differential equations}
\label{sec:examples}

We now prove \cref{assump:convergence:structural} and derive explicit expressions for the operator $\xop$ from \eqref{eq:algorithm:riesz}.
We do this in \cref{sec:examples:W} for some sample PDEs.
Then in \cref{sec:examples:splittings} we study the satisfaction of \cref{assump:convergence:splitting} for Gauss–Seidel and Jacobi splitting, as well as a simple infinite-dimensional example without splitting. We briefly discuss a quasi-conjugate gradient splitting to illustrate the generality of our approach.
We conclude with a discussion of the convergence theory and discretisation in \cref{sec:examples:discussion}.

\subsection{Partial differential equations and Riesz representations}
\label{sec:examples:W}

Let $\Sym^d \subset \R^{d \times d}$ stand for the symmetric matrices.
Recall that in \cref{ex:algorithm:b-idea}, to ensure the continuity of $B$, we needed in practise that at least one of the spaces $U$, $W$, or $X$ be finite-dimensional. The same will be the case here. Accordingly, with $\Omega \subset \R^d$ a Lipschitz domain, we take
\begin{subequations}
\label{eq:examples:setup}
\begin{equation}
    \label{eq:examples:W:x-spaces}
    x = (A,c) \in X \defeq X_1 \times X_2
    \quad\text{for subspaces}\quad
    X_1 \subset L^2(\Omega; \Sym^d)
    \quad\text{and}\quad X_2 \subset L^2(\Omega),
\end{equation}
as well as $U \subset H^1(\Omega)$ and $W \subset H_0^1(\Omega) \times H^{1/2}(\partial \Omega)$ such that
\begin{align}
    \label{eq:B-example}
    \wop(u, w; x) & \defeq \linwop(u, w; x) + \constwop(u, w)
    \quad\text{for}\quad u \in U,\, w \in W,\, x \in X
\shortintertext{is continuous, where, writing $w=(w_\Omega, w_\partial)$,}
    \linwop(u, w; x)
    &
    \defeq
    \iprod{\grad u}{A\grad w_\Omega}_{L^2(\Omega)} + \iprod{cu}{w_\Omega}_{L^2(\Omega)}
    \quad\text{and}\
    \\
    \constwop(u, w)
    &
    \defeq
    \iprod{\trace_{\partial\Omega} u}{w_\partial}_{L^2(\partial\Omega)}.
\intertext{Thus $\constwop$ models the nonhomogeneous Dirichlet boundary condition $u=g$ on $\partial \Omega$ for some $g \in H^{-\frac{1}{2}}(\partial \Omega) $. Correspondingly we take for some $L_0 \in H^{-1}(\Omega)$ the right-hand-side}
    Lw & \defeq L_0w_\Omega + \iprod{g}{w_\partial}_{L^2(\partial\Omega)}.
\end{align}
\end{subequations}

The next lemma verifies the PDE components of \cref{assump:convergence:structural}. Afterwards we look at particular choices of $X_1$ and $X_2$.
We could also take $W=H^1(\Omega)$, $w=w_\Omega$, $L=L_0$, and $\constwop=0$ to model Neumann boundary conditions, and the result would still hold.
In the range spaces of $L^p(\Omega; \R^d)$, $W^{1,p}(\Omega)$, and $L^p(\Omega; \R^{d \times d})$, we use the Euclidean norm in $\R^d$ and the spectral norm $\norm{\freevar}_2$ in $\R^{d \times d}$.

\begin{lemma}
    \label{lemma:examples:w:assumptions}
    Assume \cref{eq:examples:setup} and that $\Dom F \subset L^\infty(\Omega; \R^{d \times d}) \times L^\infty(\Omega)$.
    Then:
    \begin{enumerate}[label=(\roman*$\,'\!$), ref=(\roman*$'$), start=2]
        \item\label{item:examples:w:assumptions:subspace}
        \cref{assump:convergence:structural}\,\ref{item:convergence:structural:subspace} holds if there exists $ \lambda \in (0,1) $ such that
        \[
            A(\xi) \ge \lambda\,\Id
            \quad\text{and}\quad
            \abs{c(\xi)} \ge \lambda
            \quad\text{for all}\quad
            \xi \in \Omega
            \quad\text{and}\quad
            (A,c) \in (X_1\times X_2)\isect\Dom F.
        \]
    \end{enumerate}
    Suppose then that \eqref{eq:algorithm:oc1} is solved by $\opt v=(\opt u,\opt w,\opt x,\opt y)$  with $\optx=(\opt A, \opt c) \in \Dom F \subset  (X_1 \times X_2) $, $\optu \in H^1(\Omega) $, $\optw = (\optw_\Omega, \optw_\partial) \in H_0^1(\Omega)  \times H^{1/2}(\partial \Omega)$. If $ \|\optu\|_{W^{1,\infty}(\Omega)}, \|\optw\|_{W^{1,\infty}(\Omega)} < \infty $, and $\opty \in Y$ for a Hilbert space $Y$, then also:
    \begin{enumerate}[resume*]
        \item\label{item:examples:w:assumptions:bound-sup}
        \cref{assump:convergence:structural}\,\ref{item:convergence:structural:bound-sup} holds with $ \SupNorm(\optu) = \norm{\optu}_{W^{1,\infty}(\Omega)}^2$ and $ \SupNorm(\optw) = \norm{\optw_\Omega}_{W^{1,\infty}(\Omega)}^2$.
        \item\label{item:examples:w:assumptions:bound}
        \cref{assump:convergence:structural}\,\ref{item:convergence:structural:bound} holds with
        \[
            C_x = \sup_{(A, c) \in \Dom F}~ \norm{A-\opt{A}}_{L^\infty(\Omega; \R^{d \times d})} + \norm{c-\opt{c}}_{L^\infty(\Omega)}.
        \]
    \end{enumerate}
\end{lemma}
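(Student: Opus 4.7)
The plan is to verify the three items separately, each being essentially a standard elliptic-PDE-plus-Cauchy-Schwarz computation once the weak-Dirichlet reformulation is understood.

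For \ref{item:examples:w:assumptions:subspace}, I would reduce the existence question to the standard Dirichlet problem. Writing $w=(w_\Omega,w_\partial)$, the equation $\wop(u,\freevar;x)=\wrhs$ splits into: (a) the interior equation $\iprod{\grad u}{A\grad w_\Omega}+\iprod{cu}{w_\Omega}=L_0 w_\Omega$ for all $w_\Omega\in H_0^1(\Omega)$, and (b) the boundary equation $\iprod{\trace_{\partial\Omega} u}{w_\partial}_{L^2(\partial\Omega)}=\iprod{g}{w_\partial}_{L^2(\partial\Omega)}$ for all $w_\partial$, i.e.\ $\trace_{\partial\Omega} u = g$. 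I would introduce an $H^1$-lift $u_g$ of $g$ (possible since $g\in H^{-1/2}$ need only be interpreted consistently with the space, or $g \in H^{1/2}(\partial\Omega)$ under the standing regularity) and reduce (a) to a problem for $u_0\defeq u-u_g\in H_0^1(\Omega)$. The assumptions $A(\xi)\ge\lambda\Id$ and $c(\xi)\ge\lambda$ together with Poincaré's inequality make the bilinear form $a(u,v)\defeq\int_\Omega A\grad u\cdot\grad v + cuv$ coercive and continuous on $H_0^1(\Omega)$ (I would read $\abs{c}\ge\lambda$ together with an implicit sign convention that makes $a$ coercive), so Lax–Milgram delivers $u_0$. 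The adjoint PDE has the same bilinear structure, since $A$ is symmetric, so an identical argument with $-Q'(u)\in U^*$ as right-hand side gives the adjoint solution $w$.

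For \ref{item:examples:w:assumptions:bound-sup}, I would compute directly from the definition $\xuop=\linwop$ (which holds because $\linwop$ is bilinear in $(u,x)$ here):
\begin{equation*}
\xuop(u,\optw;x-\optx)
= \int_\Omega (A-\opt A)\grad u\cdot\grad\optw_\Omega
+ \int_\Omega (c-\opt c)u\,\optw_\Omega.
\end{equation*}
Pulling the $W^{1,\infty}(\Omega)$-norm of $\optw_\Omega$ out with Hölder, and bounding $A-\opt A$ and $c-\opt c$ in $L^2(\Omega)$ against the $X$-norm, I obtain
\begin{equation*}
\abs{\xuop(u,\optw;x-\optx)}
\le \norm{\optw_\Omega}_{W^{1,\infty}(\Omega)}\bigl(\norm{A-\opt A}_{L^2}+\norm{c-\opt c}_{L^2}\bigr)\norm{u}_{H^1(\Omega)},
\end{equation*}
which yields the required inequality with $\SupNorm(\optw)=\norm{\optw_\Omega}_{W^{1,\infty}(\Omega)}^2$. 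The symmetric estimate for $\linwop(\optu,w;x-\optx)$ is identical after swapping the roles of $u$ and $w$ and using $\norm{\optu}_{W^{1,\infty}(\Omega)}$.

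For \ref{item:examples:w:assumptions:bound}, I would reuse the same expansion but now move the $L^\infty$-norm onto $A-\opt A$ and $c-\opt c$ (available since $\Dom F\subset L^\infty(\Omega;\R^{d\times d})\times L^\infty(\Omega)$), obtaining
\begin{equation*}
\abs{\xuop(u,w;x-\optx)}
\le \bigl(\norm{A-\opt A}_{L^\infty}+\norm{c-\opt c}_{L^\infty}\bigr)\norm{u}_{H^1}\norm{w_\Omega}_{H^1},
\end{equation*}
and taking the supremum over $\Dom F$ gives the stated $C_x$. The main obstacle, and the only step that is not entirely mechanical, is the bookkeeping for \ref{item:examples:w:assumptions:subspace}: the boundary equation must be read as a well-posed definition of $w_\partial$ (respectively $u_\partial$-trace) in a space compatible with $H^{1/2}(\partial\Omega)$, and one has to check that the Lax–Milgram solution of the interior equation is consistent with it, so that $w=(w_\Omega,w_\partial)\in W$ actually belongs to the prescribed subspace.
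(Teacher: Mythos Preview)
Your proposal is correct and follows essentially the same route as the paper: lift the boundary datum to reduce to a homogeneous Dirichlet problem and apply Lax--Milgram for \ref{item:examples:w:assumptions:subspace}, then use H\"older to place the $L^\infty$-norm either on $\optu,\optw_\Omega$ (for \ref{item:examples:w:assumptions:bound-sup}) or on $A-\opt A,\,c-\opt c$ (for \ref{item:examples:w:assumptions:bound}). Your remark that $\abs{c}\ge\lambda$ alone does not guarantee coercivity without a sign convention is a point the paper glosses over; in the applications $\Dom F$ in fact forces $c\ge\lambda>0$, so coercivity holds without Poincar\'e.
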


\begin{remark}
    \label{rem:examples:w:bounds}
	On bounded $ \Omega $ the condition $ \|\optu\|_{W^{1,\infty}(\Omega)} < \infty $ is stronger than $ \optu \in H^1(\Omega) $. We include both to emphasise that the latter defines the Hilbert space structure and topology that we generally work with, while the former is a technical restriction that arises from our proofs. Under appropriate smoothness conditions on $\optx$, the boundary of $\Omega$, as well as the boundary data, standard elliptic theory proves that $\optu \in H^1(\Omega)$ is a classical solution, hence Lipschitz and $W^{1,\infty}(\Omega)$ on the whole domain; see, e.g., \cite{evans1998pde}.
\end{remark}

\begin{proof}
    For \ref{item:examples:w:assumptions:subspace}, we identify $ g \in H^{-1/2}(\partial\Omega) $ with $ \hat g \in H^{1/2}(\partial\Omega) $ by the Riesz mapping and fix $ \hat{u} \in H^1(\Omega) $ with $ \trace_{\partial\Omega}\hat{u} = \hat g $.
    This is possible by the definition of $ H^{1/2}(\partial\Omega) $.
    By the Lax--Milgram lemma there is then a unique solution $v \in H_0^1(\Omega)$ to
    \[
        \iprod{\grad v}{A\grad w_\Omega}_{L^2(\Omega)} + \iprod{cv}{w_\Omega}_{L^2(\Omega)} = L_0w_\Omega - \linwop(\hat u,w_\Omega;x) \quad\text{for all}\quad w_\Omega \in H_0^1(\Omega),
    \]
    Now $ u = v + \hat{u} $ satisfies $ B(u,w;x) = Lw $ and is independent of the choice of $ \hat{u} $.
    Analogously we prove the existence of a solution to the adjoint equation.

    To prove \cref{item:examples:w:assumptions:bound}, pick arbitrary $u \in H^1(\Omega)$, $w=(w_\Omega, w_\partial) \in H_0^1(\Omega) \times H^{1/2}(\partial\Omega)$, and $x = (A,c) \in (X_1 \times X_2) \isect \Dom F$.
    Hölder's inequality and the symmetry of $A(\xi)$ give
    \[
        \begin{aligned}[t]
        \iprod{\grad u}{A\grad w_\Omega}_{L^2(\Omega)}
        &
        \le
        \norm{\grad w_\Omega}_{L^2(\Omega; \R^d)}
        \left(\int_\Omega \norm{A(\xi)\grad u(\xi)}_2^2 \d \xi\right)^{1/2}
        \\
        &
        \le
        \norm{\grad w_\Omega}_{L^2(\Omega; \R^d)}\norm{A}_{L^\infty(\Omega; \R^{d \times d})}\norm{\grad u}_{L^2(\Omega)}.
        \end{aligned}
    \]
    Therefore, as claimed
    \begin{align*}
        \linwop(u,w;x-\optx)
        &
        \le \norm{A-\opt{A}}_{L^\infty(\Omega; \R^{d \times d})}\norm{\grad u}_{L^2(\Omega; \R^d)}\norm{\grad w_\Omega}_{L^2(\Omega; \R^d)}
        \\
        \MoveEqLeft[-1]
        + \norm{c-\opt{c}}_{L^\infty(\Omega)}\norm{u}_{L^2(\Omega)}\norm{w_\Omega}_{L^2(\Omega)}
        \\
        &
        \le \bigl(\norm{A-\opt{A}}_{L^\infty(\Omega; \R^{d \times d})}
        + \norm{c-\opt{c}}_{L^\infty(\Omega)}\bigr)\norm{u}_{H^1(\Omega)}\norm{w_\Omega}_{H^1(\Omega)}
        \\
        &
        \le
        C_x\norm{u}_{H^1(\Omega)}\norm{w_\Omega}_{H^1(\Omega)}.
    \end{align*}

    For \ref{item:examples:w:assumptions:bound-sup}, using Hölder's twice inequality and the symmetry of $A(\xi)$, we estimate
    \[
        \begin{aligned}[t]
        \iprod{\grad u}{A\grad w_\Omega}_{L^2(\Omega)}
        &
        \le
        \norm{\grad w_\Omega}_{L^\infty(\Omega; \R^d)}
        \int_\Omega \norm{A(\xi)\grad u(\xi)}_2 \d \xi
        \\
        &
        \le
        \norm{\grad w_\Omega}_{L^\infty(\Omega; \R^d)}\norm{A}_{L^2(\Omega; \R^{d \times d})}\norm{\grad u}_{L^2(\Omega)}.
        \end{aligned}
    \]
    Hence
    \[
        \begin{aligned}[t]
        \linwop(u,\optw;x)
        &
        \leq
        \norm{A}_{L^2(\Omega; \R^{d\times d})}\norm{\grad u}_{L^2(\Omega; \R^d)}\norm{\grad \optw_\Omega}_{L^\infty(\Omega; \R^d)}
        \\
        \MoveEqLeft[-1]
        + \norm{c}_{L^2(\Omega)}\norm{u}_{L^2(\Omega)}\norm{\optw_\Omega}_{L^\infty(\Omega)}
        \\
        &
        \leq
        \bigl(
            \norm{\grad \optw_\Omega}_{L^\infty(\Omega; \R^d)} + \norm{\optw_\Omega}_{L^\infty(\Omega)}
        \bigr)
        \norm{u}_{H^1(\Omega)}\bigl(
            \norm{A}_{L^2(\Omega; \R^{d\times d})} + \norm{c}_{L^2}
        \bigr)
        \\
        &
        = \norm{\optw_\Omega}_{W^{1,\infty}(\Omega)}\norm{u}_{H^1(\Omega)}\norm{x}_X.
        \end{aligned}
    \]
    Thus we may take as claimed $\SupNorm(\optw) = \norm{\optw_\Omega}_{W^{1,\infty}(\Omega)}^2$, and analogously $\SupNorm(\optu) = \norm{\optu}_{W^{1,\infty}(\Omega)}^2$.
\end{proof}

To describe $\xop$ we denote the double dot product and the outer product by
\[
    A \outerprod \tilde A = \sum_{ij}A_{ij}\tilde A_{ij},
    \quad\text{and}\quad
    v \otimes w = vw^T
    \quad\text{for}\quad
    A,\tilde A \in \R^{d\times d}
    \quad\text{and}\quad
    v,w \in \R^d
\]
Observe the identity $ v^TAw = A \outerprod (v \otimes w)$.

\begin{example}[General case] \label{ex:W:general}
    In the fully general case, formally and without regard for the solvability of the PDE \eqref{eq:pde-u}, we equip $ X_1 = L^2(\Omega; \R^{d\times d}) $ with the inner product $ \iprod{A_1}{A_2}_{X_1} := \int_\Omega A_1(\xi) \outerprod A_2(\xi) \d\xi $ and $X_2 = L^2(\Omega; \R)$  with the standard inner product in $L^2(\Omega; \R)$. Then for all $u \in U$, $w \in W$, and $(d, h) \in X_1 \times X_2$, we have
    \[
        \linwop(u,w; (d,h))
        = \iprod{\grad u}{d\grad w}_{L^2(\Omega)} + \iprod{hu}{w}_{L^2(\Omega)} = \iprod{\grad u \otimes \grad w}{d}_{X_1} + \iprod{uw}{h}_{X_2}.
    \]
    Therefore the Riesz representation $\xop$ has pointwise in $\Omega$ the expression
    \[
        \xop(u,w) = \begin{pmatrix}
            \grad u \otimes \grad w \\
            uw
        \end{pmatrix}.
    \]
    The constant $C_x$ is as provided by \cref{lemma:examples:w:assumptions}.
\end{example}

\begin{example}[Scalar function diffusion coefficient]
    \label{ex:W:aI}
    Let then $X_1 \defeq \{ \xi \mapsto a(\xi)\Id \mid a \in L^2(\Omega) \}$.
    $X_1$ is isometrically isomorphic with $L^2(\Omega)$ since the spectral norm
    $\norm{a(\xi)\Id}_2 = \abs{a(\xi)}$. We may therefore identify $X_1$ and $L^2(\Omega)$.
    We also observe that the term $\iprod{\grad u}{A\grad w}_{L^2(\Omega)} = \iprod{a}{\grad u\cdot \grad w}_{X_1} $. Hence, pointwise in $\Omega$,
    \[
        \xop(u,w) = \begin{pmatrix}
            \grad u \cdot \grad w \\
            uw
        \end{pmatrix}.
    \]
    According to \cref{lemma:examples:w:assumptions}, the constant
    \[
        C_x = \sup_{(a, c) \in \Dom F}~\norm{a-\opt a}_{L^\infty(\Omega)} + \norm{c-\opt{c}}_{L^\infty(\Omega)}.
    \]
\end{example}

\begin{example}[Spatially uniform coefficients] \label{ex:W:scalar}
    Let $X_1 \defeq \{ \xi \mapsto \tilde A \mid \tilde A \in \Sym^d \} \subset L^2(\Omega; \Sym^d)$ and $X_2 \defeq \{ \xi \mapsto \tilde c \mid \tilde c \in \R \} \subset L^2(\Omega)$ consist of constant functions $A: \xi \mapsto \tilde A$ and $c: \xi \mapsto \tilde c$ on the bounded domain $\Omega$.
    Then $\norm{x}_{X_1 \times X_2} = \abs{\Omega}^{1/2}(\norm{\tilde A}_2 + \abs{\tilde c})$ for all $x=(A, c) \in X_1 \times X_2$.
    We may thus identify $X_1$ and $X_2$ with $\R^{d \times d}$ and $\R$ if we weigh the norms by $\abs{\Omega}^{1/2}$.
    We have
    \[
        \iprod{\grad u}{A\grad w}_{L^2(\Omega)} = \int_{\Omega} \tilde A \outerprod \grad u \otimes \grad w\d\xi = \tilde A \outerprod \int_\Omega \grad u \otimes\grad w\d\xi.
    \]
    Thus
    \[
        \xop(u,w) = \begin{pmatrix}
            \int_\Omega \grad u \otimes\grad w\d{\xi}\\
            \int_\Omega uw\d{\xi}
        \end{pmatrix}.
    \]
    According to \cref{lemma:examples:w:assumptions}, the constant
    \[
        C_x =\sup_{(A, c) \in \Dom F}~ \norm{\tilde A - \tilde{\opt A}}_2 + \abs{\tilde c-\tilde{\opt c}}.
    \]
\end{example}

\subsection{Splittings}
\label{sec:examples:splittings}

We now discuss linear system splittings and \cref{assump:convergence:splitting}.
Throughout this subsection we assume that
\begin{equation}
    \label{eq:splitting:setting}
    B(u, w; x)=\dualprod{A_x u + f_x}{w}
    \quad\text{and}\quad
    Lw=\dualprod{b}{w}
\end{equation}
with $A_x \in \linear(U; W^*)$ invertible for $x \in X$, and $f_x, b \in W^*$.
Then for fixed $x \in X$ the weak PDE \eqref{eq:pde-u} and the adjoint $\uop(\freevar, w, x) = - Q'(u)$ reduce to the  linear equations
\[
    A_x u = b - f_x
    \quad\text{and}\quad
    A_x^{*} w = -  Q'(u),
\]
where $A_x^* \in \linear(W; U^*)$ is the dual product adjoint of $A_x$ restricted to $W \hookrightarrow W^{**}$.

\paragraph{The basic splittings}

The next lemma helps to prove \cref{assump:convergence:splitting} subject to a control on the rate of dependence of $A$ on $x$.
In its setting, with $A_x=N_x+M_x$ with $N_x$ “easily” invertible, \cref{step:alg:pde,step:alg:adjoint-pde} of \cref{alg:alg} are given by \eqref{eq:algorithm:splitting-update}.

\begin{theorem}
    \label{thm:splitting:helper}
    In the setting \eqref{eq:splitting:setting}, suppose \cref{assump:convergence:structural} holds and
    \begin{equation}
        \label{eq:convergence:splitting:a-condition}
        \norm{A_x - A_{\tilde x}}_{\linear(U; W^*)} \le L_A \norm{x-\tilde x}_{X}
        \quad\text{and}\quad
        \norm{f_x - f_{\tilde x}}_{W^*} \le L_f \norm{x-\tilde x}_X
        \quad (x, \tilde x \in \Dom F)
    \end{equation}
    for some $L_A \ge 0$.
    Split $A_x=N_x+M_x$ with $N_x$ invertible, and assume there exist $\alpha \in [0, 1)$ and $\gamma_N>0$ such that all
    \begin{equation}
        \label{eq:convergence:splitting:split-condition}
        \|\inv N_x M_x\|_{\linear(U; U)},\norm{N_x^{-1,*}M_x^*}_{\linear(W; W)} \le \alpha
        \quad\text{and}\quad
        \gamma_N \norm{\inv N_x}_{\linear(W^*; U)} \le 1
        \quad (x \in \Dom F).
    \end{equation}
    Also suppose $\grad Q$ is $L_Q$-Lipschitz.
    For any $\gamma_B \in (1, 1/\alpha^2)$, $\lambda \in (0, 1)$, and $\beta>0$, set
    \begin{align*}
        \pi_w & = \left(1+\beta+\frac{\alpha^2\gamma_B}{\lambda(1-\alpha^2\gamma_B)}\right)\frac{\gamma_BL_A^2\norm{\optw}^2_{W}}{\gamma_N^2},
        &
        C_Q & = \left(\frac{1+\beta}{\beta} + \frac{\alpha^2\gamma_B}{(1-\lambda)(1-\alpha^2\gamma_B)}\right)\frac{\gamma_B L_Q^2}{\gamma_N^2},
        \quad\text{and}
        \\
        \pi_u & \omit{\rlap{$\displaystyle= \left(1+\beta+\frac{\alpha^2\gamma_B}{\lambda(1-\alpha^2\gamma_B)}\right)\frac{\gamma_BL_A^2\norm{\optu}^2_{U}}{\gamma_N^2} + \left(\frac{1+\beta}{\beta} + \frac{\alpha^2\gamma_B}{(1-\lambda)(1-\alpha^2\gamma_B)}\right)\frac{\gamma_B L_f^2}{\gamma_N^2}$.}}
    \end{align*}
    Let $\Gamma_k(u, w, x)=\dualprod{M_x u}{w}$ and $\Upsilon_k(u, w, x)=\dualprod{u}{M_x^{*} w}$.
    Then \cref{assump:convergence:splitting} holds for all $k \in \N$ with $\{\nxt v\}_{k=0}^\infty$ generated by \cref{alg:alg} for any $v^0 \in U \times W \times X \times Y$.
\end{theorem}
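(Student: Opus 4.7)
The plan is to verify \cref{assump:convergence:splitting} directly from the splitting iteration. Parts \cref{item:convergence:splitting:linear} and \cref{item:convergence:splitting:existence} are immediate: the stated linearity follows from the bilinear definitions $\Gamma_k(u,w,x)=\dualprod{M_xu}{w}$ and $\Upsilon_k(u,w,x)=\dualprod{u}{M_x^*w}$, while the invertibility of $N_{\thisx}$ assumed in \eqref{eq:convergence:splitting:split-condition} makes the split primal and adjoint equations uniquely solvable in $\nxt u$ and $\nxt w$.

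The substance of the proof is part \cref{item:convergence:splitting:principal}. I would first subtract the optimality relations $A_\optx\optu=b-f_\optx$ and $A_\optx^*\optw=-\Inj_U\grad Q(\optu)$ from the split updates $N_{\thisx}\nxt u=b-f_{\thisx}-M_{\thisx}\this u$ and $N_{\thisx}^*\nxt w=-\Inj_U\grad Q(\nxt u)-M_{\thisx}^*\thisw$, and rearrange using $A_x=N_x+M_x$ to obtain
\[
    \nxt u-\optu=-\inv N_{\thisx}(A_{\thisx}-A_\optx)\optu-\inv N_{\thisx}(f_{\thisx}-f_\optx)-\inv N_{\thisx}M_{\thisx}(\this u-\optu)
\]
and an analogous identity for $\nxt w-\optw$ which additionally features the term $-\invstar{N_{\thisx}}\Inj_U[\grad Q(\nxt u)-\grad Q(\optu)]$. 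Combining the Lipschitz bounds \eqref{eq:convergence:splitting:a-condition}, the resolvent bound $\gamma_N\norm{\inv N_{\thisx}}\le 1$, the contractivity factor $\alpha$, and the $L_Q$-Lipschitz property of $\grad Q$ produces the triangle estimates
\begin{align*}
    \norm{\nxt u-\optu}_U &\le \alpha\norm{\this u-\optu}_U+\tfrac{L_A\norm{\optu}_U}{\gamma_N}\norm{\this x-\optx}_X+\tfrac{L_f}{\gamma_N}\norm{\this x-\optx}_X,\\
    \norm{\nxt w-\optw}_W &\le \alpha\norm{\thisw-\optw}_W+\tfrac{L_A\norm{\optw}_W}{\gamma_N}\norm{\this x-\optx}_X+\tfrac{L_Q}{\gamma_N}\norm{\nxt u-\optu}_U.
\end{align*}

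To finish, I would square each recursion and apply the three-term Cauchy--Schwarz/Young inequality $(X+Y+Z)^2\le AX^2+BY^2+CZ^2$, which holds as soon as $1/A+1/B+1/C\le 1$, with the choice
\[
    A=\frac{1}{\alpha^2\gamma_B},\quad
    B=1+\beta+\frac{\alpha^2\gamma_B}{\lambda(1-\alpha^2\gamma_B)},\quad
    C=\frac{1+\beta}{\beta}+\frac{\alpha^2\gamma_B}{(1-\lambda)(1-\alpha^2\gamma_B)}.
\]
In the $u$-recursion, the two $\norm{\this x-\optx}_X$ summands feed into the $B$ and $C$ slots and combine additively into the two-term form of $\pi_u$; in the $w$-recursion, $Y$ carries $\norm{\this x-\optx}_X$ and $Z$ carries $\norm{\nxt u-\optu}_U$, yielding $\pi_w$ and $C_Q$ separately. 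The slot $A$ delivers the desired leading form $\gamma_B\norm{\nxt u-\optu}^2\le\norm{\this u-\optu}^2+\dots$ because $\gamma_B A\alpha^2=1$, and identifying the remaining coefficients recovers the stated constants.

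The main obstacle is verifying $1/A+1/B+1/C\le 1$ for \emph{every} $\beta>0$ and $\lambda\in(0,1)$, so that no further restriction on these free parameters is needed beyond the standing assumption $\alpha^2\gamma_B<1$. Writing $Q=\alpha^2\gamma_B$, $\mu=1-Q$, $D_1=(1+\beta)\lambda\mu+Q$, and $D_2=(1+\beta)(1-\lambda)\mu+\beta Q$, a direct expansion gives the perfect-square identity
\[
    1-\tfrac{1}{A}-\tfrac{1}{B}-\tfrac{1}{C}=\frac{\mu Q\,[(1-\lambda)-\beta\lambda]^2}{D_1D_2}\ge 0,
\]
with equality precisely at the balanced choice $\lambda=1/(1+\beta)$. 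Once this algebraic point is in hand, the rest is routine bookkeeping of constants.
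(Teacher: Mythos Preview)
Your proof is correct and follows essentially the same route as the paper: decompose $\nxt u-\optu$ and $\nxt w-\optw$ via the split update and the optimality relations into three summands, bound each using \eqref{eq:convergence:splitting:a-condition}, \eqref{eq:convergence:splitting:split-condition}, and the Lipschitz property of $\grad Q$, then combine by a three-term Young-type inequality with the weights $A,B,C$ you name. The paper simply says ``Young's inequality thrice'' at this last step and writes down the resulting coefficients; your explicit verification of $1/A+1/B+1/C\le 1$ via the perfect-square identity is a welcome addition that the paper leaves implicit.
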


\begin{proof}
    \Cref{assump:convergence:splitting}\,\cref{item:convergence:splitting:linear} holds by construction, and \cref{item:convergence:splitting:existence} by the assumed invertibility of $N_x$ for $x \in \Dom F$.
    We only consider the second inequality of  \cref{item:convergence:splitting:principal} for $\Upsilon$, the proof of the first inequality for $\Gamma$ being analogous with $-Q'(u)$ replaced by $b-f_x$.
    We thus need to prove
    \begin{equation}
        \label{eq:splitting:helper:target}
        \norm{\thisw-\optw}^2_{W}
        \ge
        \gamma_B \norm{\nxt w-\optw}^2_{W}
        - C_Q\norm{\nxt u - \opt u}^2_{U}
        - \pi_B \norm{\this x - \optx}^2_{X}.
    \end{equation}
    Using \eqref{eq:algorithm:splitting-update} with $A_{\optx}^{*} \opt w =  -Q'(\optu)$ and $A_{\thisx}^{*} \opt w = N_{\thisx}^{*}\optw + M_{\thisx}^{*}\optw$, we expand
    \[
        \begin{aligned}[t]
        \nxt w - \opt w
        &
        = N_{\thisx}^{-1,*}(-  Q'(\nxt u) - M_{\thisx}^{*} \thisw) - \opt w
        \\
        &
        = N_{\thisx}^{-1,*}[ Q'(\opt u) -  Q'(\nxt u)] + N_{\thisx}^{-1,*}(A_{\optx}^{*}-A_{\thisx}^{*})\opt w -  N_{\thisx}^{-1,*}M_{\thisx}^* (\thisw - \optw).
        \end{aligned}
    \]
    Expanding $\norm{\nxt w-\optw}_{W}^2$ and applying the triangle inequality, and Young's inequality thrice, yields
    \[
        \begin{aligned}[t]
        \norm{\nxt w-\optw}^2_{W}
        &
        \le
        \left(1+\frac{\alpha^2\gamma_B}{\lambda(1-\alpha^2\gamma_B)}+\beta\right)\norm{N_{\thisx}^{-1,*}(A_{\optx}^{*}-A_{\thisx}^{*})\opt w}^2_{W}
        + \frac{1}{\alpha^2\gamma_B}\norm{N_{\thisx}^{-1,*}M_{\thisx}^{*} (\thisw - \optw)}^2_{W}
        \\
        \MoveEqLeft[-1]
        + \left(\frac{1 + \beta}{\beta} + \frac{\alpha^2\gamma_B}{(1-\lambda)(1-\alpha^2\gamma_B)}\right)\norm{N_{\thisx}^{-1,*}[Q'(\nxt u) - Q'(\optu)]}^2_{W}.
        \end{aligned}
    \]
    Note that the first part of \eqref{eq:convergence:splitting:a-condition} and the second part \eqref{eq:convergence:splitting:split-condition} hold also for the adjoints $A_x^*$ and $N_x^*$ in the corresponding spaces. Therefore, we establish
    $\norm{N_{\thisx}^{-1,*}(A_{\optx}^{*}-A_{\thisx}^{*})\opt w}^2_{W} \le \gamma_N^{-2}L_A^2\norm{\opt w}^2_{W}\norm{\optx-\thisx}^2_{X}$, $\norm{N_{\thisx}^{-1,*}[ Q'(\nxt u) -  Q'(\optu)]}^2_{W} \le \gamma_N^{-2} L_Q^2 \norm{\nxt u - \opt u}^2_{X}$, and $\norm{N_{\thisx}^{-1,*}M_{\thisx}^{*} (\thisw - \optw)}^2_{W} \le \alpha^2\gamma_B \norm{\thisw - \optw}^2_{W}$.
    Taking $\pi_w$ and $C_Q$ as stated, we therefore obtain \eqref{eq:splitting:helper:target}.
\end{proof}

For our first, infinite-dimensional example of the satisfaction of the conditions of \cref{thm:splitting:helper}, and hence of \cref{assump:convergence:splitting}, note that we have in general
\[
    \norm{\inv N_x}_{\linear(W^*; U)}
    =
    \sup_{w^*} \frac{\norm{\inv N_x w^*}_{U}}{\norm{w^*}_{W^*}}
    =
    \sup_{u} \frac{\norm{u}_{U}}{\norm{N_x u}_{W^*}}
    =
    \sup_{u} \inf_{w} \frac{\norm{u}_{U}\norm{w}_{w}}{\dualprod{N_x u}{w}}
\]
and
\[
    \norm{A_x - A_{\tilde x}}_{\linear(U; W^*)}
    =
    \sup_u \frac{\norm{[A_x-A_{\tilde x}]u}_{W^*}}{\norm{u}_U}
    =
    \sup_{u,w} \frac{\dualprod{[A_x-A_{\tilde x}]u}{w}}{\norm{u}_U\norm{w}_W}.
\]

\begin{example}[No splitting of a weighted Laplacian in $H^1$]
    \label{ex:splitting:laplacian}
    Let $U=W=H_0^1(\Omega)$, $X=\R$, and $N_x=A_x=x\grad^*\grad \in \linear(H_0^1(\Omega); H^{-1}(\Omega))$ be the Laplacian weighted by $x \in (0, \infty)$. Then
    \[
         \norm{\inv N_x}_{\linear(W^*; U)}
         =
         \sup_{u} \inf_{w} \frac{\norm{u}_{H^1(\Omega)}^2}{x\iprod{\grad u}{\grad w}_{L^2(\Omega)}}
         \le
         \sup_{u}  \frac{\norm{u}_{H^1(\Omega)}^2}{x\norm{\grad u}_{L^2(\Omega)}^2}.
    \]
    Therefore, assuming $\inf \Dom F > 0$, we can in \eqref{eq:convergence:splitting:split-condition} take $\gamma_N=\inf_{x \in \Dom F} x\lambda$ for $\lambda$ the infimum of the spectrum of the Laplacian as a bounded self-adjoint operator in $H_0^1(\Omega)$; see, e.g., \cite[Theorem 9.2-1]{kreyszig1991introductory}.
    Clearly also $\alpha=0$ due to $M_x=0$. For \eqref{eq:convergence:splitting:a-condition}, we get
    \[
        \norm{A_x - A_{\tilde x}}_{\linear(U; W^*)}
        =
        \sup_{u,w} (x-\tilde x)\frac{\iprod{\grad u}{\grad w}_{L^2(\Omega)}}{\norm{u}_{H^1(\Omega)}\norm{w}_{H^1(\Omega)}}
        =
        \sup_{u} (x-\tilde x)\frac{\norm{\grad u}_{L^2(\Omega)}^2}{\norm{u}_{H^1(\Omega)}}.
    \]
    Thus we can take $L_A$ as the supremum of the spectrum of the Laplacian as a bounded self-adjoint operator in $H_0^1(\Omega)$.
    
\end{example}

In the following examples, we take $U=W=\R^n$ with the standard Euclidean norm.
Then \eqref{eq:convergence:splitting:split-condition} can be rewritten as the spectral radius bound and positivity condition
\[
    \rho(\inv N_x M_x), \rho(\invstar N_x M_x^*)  \le \alpha
    \quad\text{and}\quad
    N_x^*N_x \ge \gamma_N^2.
\]
The first example also works in general spaces, as seen in a special case in \cref{ex:splitting:laplacian}, but $\gamma_N$ and $L_A$ depend on the norms chosen.
\Cref{thm:splitting:helper} now shows that \cref{assump:convergence:splitting} holds.

\begin{example}[No splitting]
    \label{ex:splitting:no-splitting}
    If $N_x=A_x \in \R^{n \times n}$, \eqref{eq:convergence:splitting:split-condition} holds with $\alpha=0$ and $\gamma_N$ the minimal eigenvalue of $A_x$, assumed symmetric positive definite.
    \Cref{thm:splitting:helper} now shows that \cref{assump:convergence:splitting} holds, where for any $\gamma_B > 1$ and $\beta>0$, we can take
    $
        \pi_w = (1+\beta)\gamma_B \gamma_N^{-2}L_A^2\norm{\optw}^2,
    $
    $
        C_Q = (1+\inv\beta)\gamma_B \gamma_N^{-2} L_Q^2,
    $
    and $\pi_u = \gamma_B \gamma_N^{-2}[(1+\beta)L_A^2\norm{\optu}^2 + (1+\inv\beta)L_f^2]$.
\end{example}

\begin{example}[Jacobi splitting]
    \label{ex:splitting:jacobi}
    If $N_x$ is the diagonal of $A_x \in \R^{n \times n}$, we obtain Jacobi splitting.
    The first part of \eqref{eq:convergence:splitting:split-condition} reduces to strict diagonal dominance, see \cite[§10.1]{golub1996matrix}. The second part always holds and $N_x$ is invertible when the diagonal of $A_x$ has only positive entries. Then $\gamma_N$ is the minimum of the diagonal values.
    \Cref{thm:splitting:helper} now shows that \cref{assump:convergence:splitting} holds.
\end{example}

\begin{example}[Gauss--Seidel splitting]
    \label{ex:splitting:gauss-seidel}
    If $N_x$ is the lower triangle and diagonal of $A_x \in \R^{n \times n}$, we obtain Gauss--Seidel splitting.
    The first part of \eqref{eq:convergence:splitting:split-condition} holds for some $\alpha \in [0, 1)$ when $A_x$ is symmetric and positive definite; compare \cite[proof of Theorem 10.1.2]{golub1996matrix}.
    The second part holds for some $\gamma_N$ when $N_x$ is invertible.
    \Cref{thm:splitting:helper} now shows that \cref{assump:convergence:splitting} holds.
\end{example}

\begin{example}[Successive over-relaxation]
    \label{ex:splitting:sor}
    Based on any one of \cref{ex:splitting:jacobi,ex:splitting:gauss-seidel,ex:splitting:no-splitting}, take $\tilde N_x = (1+r)N_x$ and $\tilde M_x = M_x - rN_x$ for some $r>0$.
    Then, for small enough $\gamma_B$, all $\pi_u, \pi_w, C_Q \downto 0$ as $r \upto \infty$.

    Indeed, $\inv{\tilde N_x}\tilde M_x z = \tilde\lambda z$ if and only if
    $M_x z = ((1+r)\tilde\lambda +r) N_x z $,
    which gives the eigenvalues $\tilde\lambda$ of $\inv{\tilde N_x}\tilde M_x$ as $\tilde\lambda=(\lambda-r)/(1+r)$ for $\lambda$ an eigenvalue of $\inv N_x M_x$.
    So, for large $r$, we can in \eqref{eq:convergence:splitting:split-condition} take $\alpha=(r+\rho)/(1+r)$ and  $\gamma_{\tilde N} = \gamma_N (1+r)$, where $\rho \defeq \rho(\inv{N_x}M_x) < 1$.
    Now, for every large enough $r>0$, for $\gamma_B=(1+\alpha^{-2})/2 > 1$, we have
    \[
        \begin{aligned}
        \frac{\alpha^2}{\gamma_{\tilde N_x}^2(1-\alpha^2\gamma_B)}
        &
        = \frac{2\alpha^2}{\gamma_{\tilde N_x}^2(1-\alpha^2)}
        = \frac{2(1+r)^2\alpha^2}{(1+r)^2\gamma_{N_x}^2((1+r)^2-(1+r)^2\alpha^2)}
        \\
        &
        = \frac{2(r+\rho)^2}{(1+r)^2\gamma_{N_x}^2((1+r)^2-(r+\rho)^2)}
        = \frac{2(r+\rho)^2}{(1+r)^2\gamma_{N_x}^2(1 -\rho^2 + 2(1-\rho)r)}.
        \end{aligned}
    \]
    Since $0 \le \rho<1$, the right hand side tends to zero as $r \upto \infty$.
    Since also $1/\gamma_N^2 \downto 0$, and $\gamma_B>1$, \cref{thm:splitting:helper} now shows that \cref{assump:convergence:splitting} holds with $\pi_u, \pi_w, C_Q \downto 0$ as $r \upto \infty$.
\end{example}

\paragraph{Quasi-conjugate gradients}

With $f_x=0$ for simplicity, motivated by the conjugate gradient method for solving $A_x u = b$, see, e.g., \cite{golub1996matrix}, we propose to perform on \cref{step:alg:pde} of \cref{alg:alg}, and analogously \cref{step:alg:adjoint-pde} the quasi-conjugate gradient update
\begin{equation}
    \label{eq:splitting:quasi-cg}
    \left\{
    \begin{aligned}
        \this r & \defeq b - A_{\this x} \this u, \\
        z^{k+1} & \defeq -\iprod{p^k}{A_{\this x} r^k}/\norm{p^k}_{A_{\this x}}^2, \\
        p^{k+1} & \defeq r^k + z^{k+1} p^k, \\
        t^{k+1} & \defeq \iprod{p^{k+1}}{r^k}/\norm{p^{k+1}}_{A_{\thisx}}^2, \\
        u^{k+1} & \defeq u^k + t^{k+1}p^{k+1}.
    \end{aligned}
    \right.
\end{equation}
For standard conjugate gradients $A_{\thisx} \equiv A$ permits a recursive residual update optimization that we are unable to perform.
We have $\iprod{A_{\thisx}\nxt p}{\this p}=0$ for all $k$, although no “$A$-conjugacy” relationship necessarily exists between $\nxt p$ and $p^j$ for $j < k$.

The next lemma molds the updates \eqref{eq:splitting:quasi-cg} into our overall framework.

\begin{lemma}
    \label{lemma:splitting:cg-gamma}
    The update \eqref{eq:splitting:quasi-cg} corresponds to \cref{step:alg:pde} of \cref{alg:alg} with
    \begin{equation}
        \label{eq:splitting:gamma-cg}
        \Gamma_k(u, \freevar, x)
        = \left[\Id -  \norm{p^{k+1}}_{A_x}^{-2} A_x \left(p^{k+1} \otimes p^{k+1}\right)\right](A_x u^k-b)
        \quad (u \in U).
    \end{equation}
    for $\nxt p = \this r_x + \nxt z_x \this p$ for $\nxt z_x = -\iprod{\this p}{A_x\this r_x}/\norm{\this p}_{A_x}^2$ and $\this r_x \defeq A_x \this u - b$.
\end{lemma}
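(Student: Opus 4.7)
My plan is to unfold \cref{step:alg:pde} of \cref{alg:alg} under the substitution \eqref{eq:splitting:gamma-cg} and verify that the resulting explicit update for $\nxt u$ coincides with the final line of \eqref{eq:splitting:quasi-cg}. Since the PDE is taken in the linear form \eqref{eq:splitting:setting} with $f_x = 0$, the weak equation in \cref{step:alg:pde} has left side $\iprod{A_{\this x}\nxt u}{\tilde w}$ minus the correction $\Gamma_k(\nxt u - \this u, \tilde w; \this x)$, and right side $\iprod{b}{\tilde w}$. Inserting the proposed form of $\Gamma_k$ and using that the identity must hold for every $\tilde w \in W$, I would extract the strong-form identity
\[
    A_{\this x}\nxt u - \bigl[\Id - \norm{\nxt p}_{A_{\this x}}^{-2}\, A_{\this x}\,(\nxt p \otimes \nxt p)\bigr](A_{\this x}\this u - b) = b.
\]

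The main step is then to isolate $\nxt u - \this u$. Expanding the bracket, the $\Id$ part contributes $-(A_{\this x}\this u - b)$, which combines with $A_{\this x}\nxt u$ and the $b$ on the right to leave $A_{\this x}(\nxt u - \this u)$ on the left. What remains is
\[
    A_{\this x}(\nxt u - \this u) = -\norm{\nxt p}_{A_{\this x}}^{-2}\, A_{\this x}\nxt p\,\iprod{\nxt p}{A_{\this x}\this u - b}.
\]
Invertibility of $A_{\this x}$ (assumed in \eqref{eq:splitting:setting}) lets me strip $A_{\this x}$ from both sides. Recognising $A_{\this x}\this u - b = -\this r$ with $\this r$ as in \eqref{eq:splitting:quasi-cg}, the update reduces to $\nxt u - \this u = \iprod{\nxt p}{\this r}\,\nxt p / \norm{\nxt p}_{A_{\this x}}^2 = \nxt t \nxt p$, which is precisely the last line of \eqref{eq:splitting:quasi-cg}.

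The only subtlety, and what I expect to be the main obstacle, is reconciling the $\nxt p$ used in the lemma's definition of $\Gamma_k$ with the $\nxt p$ that appears inside \eqref{eq:splitting:quasi-cg}. Since the lemma sets $\this r_x = A_x \this u - b = -\this r$ and $\nxt z_x = -\iprod{\this p}{A_x \this r_x}/\norm{\this p}_{A_x}^2 = -\nxt z$, the $\nxt p = \this r_x + \nxt z_x \this p$ of the lemma is the negative of the $\nxt p$ produced by \eqref{eq:splitting:quasi-cg}. However, \eqref{eq:splitting:gamma-cg} depends on $\nxt p$ only through the quadratic quantities $\nxt p \otimes \nxt p$ and $\norm{\nxt p}_{A_x}^2$, so the sign is immaterial and the two $\Gamma_k$ specifications agree. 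The remainder of the argument is a short algebraic verification; the care required is entirely in bookkeeping the sign conventions between the residuals $\this r$ and $\this r_x$.
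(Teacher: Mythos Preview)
Your proposal is correct and essentially mirrors the paper's own argument in reverse: the paper starts from the $u$-update of \eqref{eq:splitting:quasi-cg}, multiplies by $A_{\this x}$, and adds $A_{\this x}\this u - b$ to both sides to arrive at the split weak PDE, while you start from \cref{step:alg:pde} with the given $\Gamma_k$ and undo those steps. The algebraic content is identical. Your observation that the lemma's $\nxt p$ (built from $\this r_x = A_x\this u - b$) is the negative of the $\nxt p$ in \eqref{eq:splitting:quasi-cg}, and that this is harmless because $\Gamma_k$ depends only on $\nxt p \otimes \nxt p$ and $\norm{\nxt p}_{A_x}^2$, is in fact more careful than the paper, which silently identifies the two.
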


\begin{proof}
    Indeed, expanding $t^{k+1}$, the $u$-update of \eqref{eq:splitting:quasi-cg} may be rewritten as
    \[
        u^{k+1} - u^k = \norm{p^{k+1}}_{A_{\thisx}}^{-2}(p^{k+1} \otimes p^{k+1}) r^k.
    \]
    Applying the invertible matrix $A_{\thisx}$ and expanding $r^k$, this is
    \[
        A_{\thisx}(u^{k+1}-u^k) =  - \norm{p^{k+1}}_{A_{\thisx}}^{-2} A_{\thisx}(p^{k+1} \otimes p^{k+1})(A_{\thisx} u^k-b),
    \]
    and, adding $A_{\thisx}\thisu-b$ on both sides, further
    \[
        A_{\thisx} u^{k+1}-b = [\Id - \norm{p^{k+1}}_{_{\thisx}}^{-2} A_{\thisx} (p^{k+1} \otimes p^{k+1})](A_{\thisx} u^k-b).
    \]
    Since $B(\nxt u, \freevar; \thisx) = \iprod{A_{\thisx} \nextu}{\freevar}$, and $L(\freevar)=\iprod{b}{\freevar}$, the claim follows.
\end{proof}

Unless $A_x$ is independent of $x$, a simple approach as in \cref{thm:splitting:helper} can only verify \cref{assump:convergence:splitting} with $\gamma_B<1$.
We hence leave the verification of convergence of \cref{alg:alg} with quasi-conjugate gradient updates to future research.

\subsection{Discussion}
\label{sec:examples:discussion}

Before we embark on numerical experiments, it is time to make a few unifying observations about the disparate results above, with regard to the main conditions \cref{eq:convergence:balance-accel:2,eq:convergence:balance-linear:2} of the convergence \cref{thm:convergence:accel,thm:convergence:linear}, and their connection to the fundamentally \emph{discrete} viewpoint of \cref{ex:splitting:jacobi,ex:splitting:gauss-seidel}.
As we have already noted in \cref{rem:linear-pde-condition},
\begin{enumerate}[label=(\roman*)]
    \item The main conditions \cref{eq:convergence:balance-accel:2,eq:convergence:balance-linear:2}  are easily satisfied for linear PDEs, i.e., when $B_x$ does not depend on $u$. In \cref{sec:examples:splittings}, this corresponds to $A_x=A$ (while $f_x$ may still depend on $x$).
    The only condition given in \cref{rem:linear-pde-condition} was that $\pi_w=0$, which is satisfied in \cref{ex:splitting:jacobi,ex:splitting:gauss-seidel,ex:splitting:no-splitting} due to $L_A=0$.
\end{enumerate}
For linear PDEs, $\SupNorm(\optw)=0$. Together with $\pi_w=0$, this causes also $\SupNorm(\optu)$ and $\pi_u$ to disappear from the convergence conditions.
All of these quantities \emph{might} depend on the discretisation.

As we have seen in \cref{sec:examples:W}, $\SupNorm(\optu)$ and $\SupNorm(\optw)$ require the use of $\infty$-norm bounds on the solutions, even when the underlying space is $H^k$. Such bounds may not always hold in infinite dimensions (however, see \cref{rem:examples:w:bounds}), although they do always hold in finite-dimensional subspaces.
In our numerical experiments, we have, however, not observed any grid dependency of $\SupNorm(\optu)$ and $\SupNorm(\optw)$ (calculated a posteriori, after a very large number of iterations).

On a more negative note, with $U=W=\R^{n(h, d)}$ equipped with the standard Euclidean norm, consider $A_x=-x\Delta_h$ for a scalar $x$ with $\Delta_h$ a finite differences discretisation of the Laplacian on a $d$-dimensional square grid of cell width $h$ and $n(h, d)$ nodes.
Then, for both Jacobi and Gauss–Seidel splitting, as well as the trivial splitting (gradient descent) $N_x \propto \Id$, the spectral radius $\rho(\inv N_x M_x) \upto 1$ as $h \downto 0$; see, e.g., \cite[Chapter 4.2.1]{leveque2007fdm}. By simple numerical experiments, $L_A^2/\gamma_N^2$ nevertheless stays roughly constant, so the result is that $\pi_u, \pi_w \upto \infty$ as $h \downto 0$.
For “no splitting”, i.e., $N_x=A_x$, instead $L_A^2/\gamma_N^2 \upto \infty$ due to the worsening condition number of $\Delta_h$. This latter negative result is, however, dependent on taking $U=W=\R^{n(h, d)}$ with the standard Euclidean norm: in \cref{ex:splitting:laplacian} we showed that “no splitting” is applicable to the same problem in $H^1$.
It is, therefore, an interesting question for future research, whether a change of norms would remove the grid dependency of Jacobi and Gauss–Seidel. Our guess is that it would not.

The above indicates that, for nonlinear PDEs, whether our methods even convergence, can depend on the level of discretisation.
Nevertheless, to help comes the successive over-relaxation of \cref{ex:splitting:sor}, which shows that
\begin{enumerate}[resume*]
    \item By letting the over-relaxation parameter $r \upto \infty$, we get $\pi_u, \pi_w, C_Q \downto 0$, and therefore may be able to obtain convergence (with a comparable iteration count) for any magnitude of $\SupNorm(\optu)$, $\SupNorm(\optw)$.
\end{enumerate}

With over-relaxation $\gamma_B \downto 1$ as $r \upto \infty$, so even then, to satisfy \cref{eq:convergence:balance-accel:2,eq:convergence:balance-linear:2}, it is necessary to have very small $C_x$. However,
\begin{enumerate}[resume*]
    \item In \cref{sec:convergence,sec:examples:W}, we have bounded $C_x$ through $\Dom F$, obtaining global convergence when  \cref{eq:convergence:balance-accel:2,eq:convergence:balance-linear:2} hold. With a more refined analysis, it is possible to make $C_x$ arbitrary small by sufficiently good initialisation, i.e., by being content with mere local convergence.
\end{enumerate}
We include a sketch of this analysis in \cref{sec:localisation}.

Finally, although convergence rates ($O(1/N^2)$ or linear) are unaffected by the discretisation level, constant factors of convergence depend on $Z_k\tilde M_k$ through the bound \eqref{eq:convergence:summed-estimate}. This operator, written out in \eqref{eq:convergence:zktildemk}, depends on the constants $\pi_u$ and $\pi_w$.
They inversely scale the magnitude of the testing parameters $\lambda_k$ and $\theta_k$ as chosen in \eqref{eq:convergence:balance:lambdathetadef}.
By \eqref{eq:convergence:balance:lambdathetapi}, the term $\varphi_k+\lambda_k\pi_u + \theta_k\pi_w$ in \eqref{eq:convergence:zktildemk} is, however, independent of $\pi_u$ and $\pi_w$.
Smaller $\pi_u$ and $\pi_w$ are, hence, better for the convergence of $u$ and $w$ (by weighing down the $x$ and $y$ initialisation errors on the right hand side of \eqref{eq:convergence:summed-estimate}), and higher $\pi_u$ and $\pi_w$ are better for the convergence of $x$ and $y$ (by weighing down $u$ and $w$ initialisation errors).
Even for linear PDEs, therefore
\begin{enumerate}[resume*]
    \item Convergence speed may depend on the level of discretisation through the $x$-sensitivity factors $\pi_u$ and $\pi_w$ of the splitting method for the PDE.
\end{enumerate}
This is to be expected: the linear system solvers that \cref{sec:examples:splittings} is based on, are \emph{fundamentally discrete}, and their convergence depends on the eigenvalues of $\inv N_x M_x$ and $N_x$.
In “standard” optimisation methods, the dimensionally-dependent linear system solver is taken as a black box, and its computational cost is hidden from the estimates for the optimisation method.
The estimates for our method, by contrast, include the solver.

\section{Numerical results}
\label{sec:numerics}

We now illustrate the numerical performance of \cref{alg:alg}.
We first describe our experimental setup, and then discuss the results.

\subsection{Experimental setup}

The PDEs in our numerical experiments take one of the forms of \cref{sec:examples:W} on the domain $\Omega = [0,1]\times [0,1]$ with nonhomogeneous Dirichlet boundary conditions.
We discretize the domain as a regular grid and the PDEs by backward differences.
We use both a coarse and a fine grid.

The function $G$ and the PDE vary by experiment, but in each one we take the regularization term for the control parameter $x$ and the data fitting term as
\begin{gather} \label{eq:experiments:F-and-Q}
    F(x) \defeq \frac{\alpha}{2}\norm{x}_{L^2(\Omega; \R^{d \times d}) \times L^2(\Omega)}^2 + \delta_{[\lambda,\lambda^{-1}]}(x)
    \quad\text{and}\quad
    Q(u) \defeq \widehat{\beta} \sum_{i=1}^m\norm{u_i - z_i}_{L^2(\Omega)}^2
\end{gather}
for some $\alpha, \beta, \lambda > 0 $ as well as $\widehat{\beta} \defeq \beta/(2\norm{\bar z}_{L^2(\Omega)}^2)$ where $\bar{z} = \frac{1}{m}\sum_{i=1}^m z_i$ is the average of the measurement data $z_i$.
The norms here are in function spaces, but in the numerical experiments the variables are, of course, taken to be in a finite-dimensional (finite element) subspace.

The variables $u_i$ correspond to multiple copies of the same PDE with different boundary conditions $u_i = f_i$ on $\partial \Omega$, ($i = 1,\dots,m$), for the same control $x$.
Parametrizing $\partial\Omega$ by $\rho : (0,1) \to \partial\Omega$, we take as boundary data
\begin{gather}
    \label{eq:experiments:f-bdry}
    f_{2j-1}(\rho(t)) = \cos(2\pi jt) \quad \text{and} \quad f_{2j}(\rho(t)) = \sin(2\pi jt),
    \quad
    (j=1,\ldots,m/2).
\end{gather}

To produce the synthetic measurement $z_i$, we solve for $\hat u_i$ the PDE corresponding to the experiment with the ground truth control parameter $\hat x = (\hat{A},\hat{c}) $ and boundary data $f_i$.
To this we add Gaussian noise of standard deviation $0.01\norm{\hat{u}_i}_{L^2(\Omega)}$ to get $z_i$.

We next describe the PDEs for each of our experiments.

\begin{experiment}[Scalar coefficient]
\label{num:example:1}
In our first numerical experiment, we aim to determine the scalar coefficient $ c \in \R $ for the PDEs
\begin{align}
    \label{eq:numerics:pde1}
    \left\{\begin{aligned}
        -\Delta u_i + cu_i &= 0 && \text{in $ \Omega $}, \\
        u_i &= f_i && \text{on $ \partial\Omega $},
    \end{aligned}\right.
\end{align}
where $i=1,\ldots,m$.
For this problem we choose $ G(Kx) = 0 $. Thus the objective is
\begin{equation}
    \label{eq:numerics:problem1}
    \min_{u,c} J(x) \defeq \frac{\alpha}{2}\norm{c\mathbf{1}}_{L^2(\Omega)}^2 + \delta_{[\lambda, \lambda^{-1}]}(c) + \widehat{\beta}\sum_{i=1}^m\norm{u_i - z_i}_{L^2(\Omega)}^2
    \quad\text{subject to \eqref{eq:numerics:pde1}}.
\end{equation}
Our parameter choices can be found in \cref{tbl:parameters}.

With $u = (u_1,\ldots, u_m) \in U^m \subset{} H^1(\Omega)^m$ and $w=(w_{1,\Omega}, \ldots, w_{m,\Omega},w_{1,\partial},\ldots,w_{m,\partial}) \in W^m \subset H_0^1(\Omega)^m \times H^{1/2}(\partial \Omega)^m$, for the weak formulation of \eqref{eq:numerics:pde1} we take
\begin{gather}
    \nonumber
    B(u, w; c) = \sum_{i=1}^m\left(
        \iprod{\grad u_i}{\grad w_{i,\Omega}}_{L^2(\Omega)}
        + c \iprod{u_i}{w_{i,\Omega}}_{L^2(\Omega)}
        + \iprod{\trace_{\partial \Omega} u_i}{w_{i,\partial}}_{L^2(\partial\Omega)}
    \right)
\shortintertext{and}
    \label{eq:numerics:pde1:rhs}
    Lw = \sum_{i=1}^m\iprod{f_i}{w_{i,\partial}}_{L^2(\partial\Omega)}.
\end{gather}
Then $ \xop(u,w) = \sum_{i=1}^m \iprod{u_i}{w_{i,\Omega}}_{L^2(\Omega)} $ following \cref{ex:W:scalar}.

For data generation we take $\hat c = 1.0 $.
Since we are dealing with an ill-posed inverse problem, an optimal control parameter $\opt c$ for \eqref{eq:numerics:problem1} does not in general equal $\hat c$.
Therefore, to compare algorithm progress, we take as surrogate for the unknown $\opt c$ the iterate $ \tilde{c}_A \defeq c^{50,000} $ on the coarse grid and $\tilde c_B \defeq c^{500,000} $ on the fine grid, each computed using \cref{alg:alg} without splitting.

The next theorem verifies the basic structural conditions of the convergence \cref{thm:convergence:accel,thm:convergence:linear}. The splitting conditions contained \cref{assump:convergence:splitting} are ensured through \cref{ex:splitting:jacobi} (Jacobi), \ref{ex:splitting:gauss-seidel} (Gauss--Seidel), or \ref{ex:splitting:no-splitting} (no splitting).

\begin{theorem}
	\label{thm:numerics:pde1}
    Let $ X = \R $; $U$ a finite-dimensional subspace of $H^1(\Omega)$; and $ W $ a finite-dimensional subspace of $ H_0^1(\Omega) \times H^{1/2}(\partial\Omega) $.
    Let $F$ and $Q$ be given by \eqref{eq:experiments:F-and-Q} along with the PDE \eqref{eq:numerics:pde1} and the boundary conditions $ f_i $ defined as in \eqref{eq:experiments:f-bdry}. Take $G=0$. Then \cref{assump:convergence:structural} holds.
\end{theorem}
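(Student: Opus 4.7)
My plan is to treat the scalar-coefficient experiment of \cref{num:example:1} as a special case of the abstract framework in \cref{sec:examples:W}, and then invoke \cref{lemma:examples:w:assumptions} to dispatch parts \ref{item:convergence:structural:subspace}--\ref{item:convergence:structural:bound} of \cref{assump:convergence:structural}, while part \ref{item:convergence:structural:convexity} is verified by inspection. The setup matches \eqref{eq:examples:setup} with the diffusion matrix $A \equiv \Id$ \emph{fixed}, so $X_1$ plays no role, and $X_2 \subset L^2(\Omega)$ is identified through the constant-function embedding $\tilde c \mapsto \tilde c\,\mathbf{1}$ with the scalar space $\R$, as in \cref{ex:W:scalar}. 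The presence of $m$ boundary data $f_i$ forces us to work on the product spaces $U^m$ and $W^m$, but since the $m$ PDEs are decoupled and share the same $c$, this contributes only an outer sum in $B$, $L$, and $\xop$ and preserves all continuity and coercivity constants.

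For \cref{assump:convergence:structural}\,\ref{item:convergence:structural:convexity}, the term $\tfrac{\alpha}{2}\norm{c\mathbf{1}}_{L^2(\Omega)}^2 = \tfrac{\alpha\abs{\Omega}}{2}c^2$ is strongly convex in $c$ and the indicator $\delta_{[\lambda,\lambda^{-1}]}$ is convex, so $F$ is strongly convex with factor $\gamma_F = \alpha$ under the weighted scalar identification of \cref{ex:W:scalar}. Taking $G \equiv 0$ gives $\gamma_{G^*} = 0$ and $\Dom G = Y$, so the sum-rule qualification \eqref{eq:algorithm:sumrule-ok} reduces to $\Dom F \ne \emptyset$, which is trivial.

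Clause \ref{item:convergence:structural:subspace} follows from \cref{lemma:examples:w:assumptions}\,(ii$'$): with $A \equiv \Id$ and $c \in [\lambda,\lambda^{-1}]$, both $A(\xi) \ge \lambda\,\Id$ and $\abs{c(\xi)} \ge \lambda$ hold on $\Dom F$ provided $\lambda \in (0,1)$, so the lemma produces unique weak solutions to the forward and adjoint PDEs for every $c \in \Dom F$ and every $f_i$.

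For clauses \ref{item:convergence:structural:bound-sup} and \ref{item:convergence:structural:bound}, the finite-dimensionality of $U$ and $W$ is the key ingredient: on any finite-dimensional subspace all norms are equivalent, so $\optu_i$ and $\optw_i$ automatically have finite $W^{1,\infty}(\Omega)$ norm, and \cref{lemma:examples:w:assumptions}\,(iii$'$) supplies the required $\SupNorm(\optu)$ and $\SupNorm(\optw)$ as sums over the $m$ components. Finally, since $A$ is held fixed we have $\norm{A-\opt A}_{L^\infty} = 0$, and \cref{lemma:examples:w:assumptions}\,(iv$'$) yields $C_x \le \lambda^{-1}-\lambda$ from the bounded interval $[\lambda,\lambda^{-1}]$. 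There is no substantive obstacle here; the only care required is to assume $\lambda \in (0,1)$ and to keep track of the product structure over the $m$ decoupled PDEs, both of which are routine.
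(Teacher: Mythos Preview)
Your proof is correct and follows essentially the same route as the paper's own argument: verify \cref{assump:convergence:structural}\,\ref{item:convergence:structural:convexity} by inspection, then invoke \cref{lemma:examples:w:assumptions} for \ref{item:convergence:structural:subspace}--\ref{item:convergence:structural:bound}, using the constraint $c \in [\lambda,\lambda^{-1}]$ for (ii$'$) and finite-dimensionality of $U$ and $W$ for (iii$'$)--(iv$'$). You provide more explicit detail than the paper (the identification with \cref{ex:W:scalar}, the concrete values $\gamma_F=\alpha$ and $C_x \le \lambda^{-1}-\lambda$, and the norm-equivalence argument for the $W^{1,\infty}$ bounds), but the structure is identical.
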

\begin{proof}
	The chosen $F$, $Q$ and either $G$ satisfy \cref{assump:convergence:structural}\ref{item:convergence:structural:convexity}. The boundary conditions $ f_i \in H^{1/2}(\partial\Omega)$ along with the constraint $x \in [\lambda,\lambda^{-1}]$ ensure the condition \cref{lemma:examples:w:assumptions}\ref{item:examples:w:assumptions:subspace}. In the discretized setting, also \ref{item:examples:w:assumptions:bound-sup} and \ref{item:examples:w:assumptions:bound} also hold.
    In conclusion, \cref{lemma:examples:w:assumptions} verifies \cref{assump:convergence:structural}.
\end{proof}

\begin{remark}
\label{rem:numerics:pde1}
It remains to verify \eqref{eq:convergence:balance-accel} or \eqref{eq:convergence:balance-linear}, depending on the convergence theorem used.
The condition \eqref{eq:convergence:balance-accel:0} is readily verified by appropriate choice of the primal and dual step length parameters $\tau_0,\sigma_0>0$.
We also take $\tilde\gamma_F=0$ (slightly violating the assumptions), so that $\omega_k \equiv 1$, and $\tau_k \equiv \tau_0$ and $\sigma_k \equiv \sigma_0$.
The condition \eqref{eq:convergence:balance-accel:2} (and likewise  \eqref{eq:convergence:balance-linear:2} for linear convergence) is very difficult to verify \emph{a priori} for nonlinear PDEs, as it depends on the knowledge of a solution to the optimisation problem through $\SupNorm(\optu)$ and $\SupNorm(\optw)$.
This is akin to the difficulty of verifying (a priori) a positive Hessian at a solution for standard nonconvex optimisation methods.
Hence we do not attempt to verify \eqref{eq:convergence:balance-accel:2}.
\end{remark}
\end{experiment}

\begin{experiment}[Diffusion + scalar coefficient]
\label{num:example:2}
In this experiment we aim to determine the coefficient function $ a : \Omega \to \R $ and scalar $ c \in \R $ for the group of PDEs
\begin{align}
    \label{eq:numerics:pde2}
    \left\{\begin{aligned}
        -\grad\cdot (a\grad u_i) + cu_i &= 0 && \text{in $ \Omega $}, \\
        u_i &= f_i && \text{on $ \partial\Omega $},
    \end{aligned}\right.
\end{align}
where $i=1,\ldots,m$. The optimization problem then is
\begin{gather}
	\label{eq:numeric:example2}
    \min_{x = (a,c)} J(x) = \delta_{[\lambda, \lambda^{-1}]}(x) + \widehat{\beta}\sum_{i=1}^m\norm{u_i - z_i}_{L^2(\Omega)}^2 + \gamma\norm{\grad a}_1
    \quad\text{subject to \eqref{eq:numerics:pde2}}.
\end{gather}
Note that, although we take the total variation of $a$, which is natural in the space of functions of bounded variation, we consider $a$ to lie in (as per \cref{ex:algorithm:b-idea} a finite-dimensional subspace of) $L^2(\Omega)$. Thus the total variation term has value $+\infty$ in $L^2(\Omega) \setminus \operatorname{BV}(\Omega)$. Nevertheless, the term is weakly lower semicontinuous even in $L^2$ due to Poincaré's inequalities (for example, \cite[Theorem 3.44]{ambrosio2000fbv}), so the problem is well-defined. Subdifferentiation in $L^2(\Omega)$ is a slightly more delicate issue, but not a problem for optimality conditions of problems of the type \eqref{eq:numeric:example2}, as discussed in \cite[Remark 4.7]{tuomov-regtheory}. Moreover, as said, in practise we work in a finite-dimensional subspace that corresponds to the backward differences discretisation of the gradient in the total variation term. The convergence of discretisations is discussed in \cite{ckp1999regularization}.

For the weak formulation of \eqref{eq:numerics:pde2} with $w=(w_{1,\Omega}, \ldots, w_{m,\Omega},w_{1,\partial},\ldots,w_{m,\partial}) \in W^m \subset H_0^1(\Omega)^m \times H^{1/2}(\partial \Omega)^m$, $u = (u_1,\ldots, u_m) \in U^m\subset{}H^1(\Omega)^m$, and $x=(a,c) \in X \subset L^2(\Omega) \times \R$, we take $L$ as in \eqref{eq:numerics:pde1:rhs} and
\[
    B(u, w; x) = \sum_{i=1}^m\left(
        \iprod{\grad u_i}{a\grad w_{i,\Omega}}_{L^2(\Omega)}
        + c \iprod{u_i}{w_{i,\Omega}}_{L^2(\Omega)}
        + \iprod{\trace_{\partial \Omega} u_i}{w_{i,\partial}}_{L^2(\partial\Omega)}
    \right).
\]
Then $ \xop(u,w) = (\xop^1(w,u),\xop^2(w,u))$ takes on a mixed form with $ \xop^1(w,u) = \sum_{i=1}^m \nabla u_i\cdot\nabla w_{i,\Omega} $ from \cref{ex:W:aI} and $\xop^2(w,u) = \sum_{i=1}^m \iprod{u_i}{w_{i,\Omega}}_{L^2(\Omega)} $ from \cref{ex:W:scalar}.

For data generation we take $ \hat c = 1.0 $ and $ \hat a $ as the phantom in \cref{fig:case4a:experiments:recon}.
Similarly to \cref{num:example:1} we compare the progress towards $ \tilde a \defeq a^{1,000,000} $ and $ \tilde c \defeq c^{1,000,000} $ computed using \cref{alg:alg} with full matrix inversion.

As above for \cref{num:example:1}, the next theorem verifies the basic structural conditions of the convergence \cref{thm:convergence:accel,thm:convergence:linear}. The proofs is analogous to that \cref{thm:numerics:pde1}. Likewise, the splitting \cref{assump:convergence:splitting} is verified as before through \cref{ex:splitting:jacobi} (Jacobi), \ref{ex:splitting:gauss-seidel} (Gauss--Seidel), or \ref{ex:splitting:no-splitting} (no splitting), while \cref{rem:numerics:pde1} applies for the remaining step length and growth conditions.

\begin{theorem}
	\label{thm:numerics:pde2}
    Let $X$ be a finite-dimensional subspace of $L^2(\Omega) \times \R $, $U$ a finite-dimensional subspace of $H^1(\Omega)$ and $ W $ a finite-dimensional subspace of $ H_0^1(\Omega) \times H^{1/2}(\partial\Omega) $.
    Let $F$ and $Q$ be given by \eqref{eq:experiments:F-and-Q} along with the PDE \eqref{eq:numerics:pde2} with the boundary conditions $ f_i $ defined as in \eqref{eq:experiments:f-bdry} and $G$ be $ \|\freevar\|_1$. Then \cref{assump:convergence:structural} holds.
\end{theorem}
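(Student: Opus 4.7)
The plan is to follow exactly the template of the proof of \cref{thm:numerics:pde1}. The only genuine novelty is that the control now has two components $x = (a, c)$, so we must fit the experiment into the general framework \eqref{eq:examples:setup} by taking $A(\xi) \defeq a(\xi)\Id \in L^2(\Omega; \Sym^d)$ (a scalar-function diffusion tensor as in \cref{ex:W:aI}) and the scalar $c$ as a spatially constant coefficient (as in \cref{ex:W:scalar}). The weak form $B$ given in \cref{num:example:2} then coincides with \eqref{eq:B-example}, and the mixed form of $\xop$ already claimed in the experiment is obtained by concatenating the Riesz representations derived in \cref{ex:W:aI,ex:W:scalar}. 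With this identification in place, \cref{lemma:examples:w:assumptions} does the rest of the work.

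Concretely, for \cref{assump:convergence:structural}\,\cref{item:convergence:structural:convexity}: the indicator $\delta_{[\lambda, \lambda^{-1}]}$ in $F$ is convex, proper, and lower semicontinuous on the finite-dimensional $X$; the $L^2$-quadratic part of $F$ and the data term $Q$ from \eqref{eq:experiments:F-and-Q} are convex and Fréchet differentiable; and $G = \norm{\freevar}_1$ is a finite convex norm, hence convex, proper, lsc. The qualification \eqref{eq:algorithm:sumrule-ok} holds trivially, since $G$ is real-valued on all of $Y$, giving $\interior \Dom G = Y$, so any feasible $x$ suffices. For \cref{item:convergence:structural:subspace}, the box constraint enforces $a(\xi) \ge \lambda$ and $c \ge \lambda$ on every element of $\Dom F$, hence $A(\xi) = a(\xi)\Id \ge \lambda \Id$ and $\abs{c} \ge \lambda$; \cref{lemma:examples:w:assumptions}\,\cref{item:examples:w:assumptions:subspace} then produces solutions of the PDE \eqref{eq:numerics:pde2} and of its adjoint.

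For \cref{item:convergence:structural:bound-sup} and \cref{item:convergence:structural:bound}, the upper side of the box gives $\norm{a}_{L^\infty(\Omega)} \le \lambda^{-1}$ and $\abs{c} \le \lambda^{-1}$, so that $\Dom F \subset L^\infty(\Omega; \R^{d\times d}) \times L^\infty(\Omega)$, whence \cref{lemma:examples:w:assumptions}\,\cref{item:examples:w:assumptions:bound} supplies a finite $C_x$. In the finite-dimensional discretised setting, equivalence of norms on the subspaces $U \subset H^1(\Omega)$ and $W \subset H_0^1(\Omega) \times H^{1/2}(\partial\Omega)$ yields $\norm{\optu}_{W^{1,\infty}(\Omega)}, \norm{\optw}_{W^{1,\infty}(\Omega)} < \infty$, so \cref{lemma:examples:w:assumptions}\,\cref{item:examples:w:assumptions:bound-sup} provides the stated $\SupNorm(\optu)$ and $\SupNorm(\optw)$.

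The proof is essentially bookkeeping; the only mild obstacle is noticing that the present $X \subset L^2(\Omega) \times \R$ embeds into the $X_1 \times X_2$ of \eqref{eq:examples:W:x-spaces} via the isometry $(a, c) \mapsto (a\Id, \,\xi \mapsto c)$, so that \cref{lemma:examples:w:assumptions} actually applies. Unlike in \cref{thm:numerics:pde1}, both components of $x$ now participate genuinely in $\xop$, but this is precisely what the direct combination of \cref{ex:W:aI,ex:W:scalar} handles. Verification of \cref{assump:convergence:splitting} and of the step-length conditions \eqref{eq:convergence:balance-accel} or \eqref{eq:convergence:balance-linear} is deferred, as in \cref{num:example:1}, to \cref{ex:splitting:jacobi,ex:splitting:gauss-seidel,ex:splitting:no-splitting} and \cref{rem:numerics:pde1}.
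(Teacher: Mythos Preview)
Your proposal is correct and follows essentially the same approach as the paper, which simply states that the proof ``is analogous to that of \cref{thm:numerics:pde1}'' and relies on \cref{lemma:examples:w:assumptions} together with the box constraint $[\lambda,\lambda^{-1}]$ and finite-dimensionality of the subspaces. Your write-up merely spells out in more detail the embedding $(a,c) \mapsto (a\Id, \xi \mapsto c)$ into the framework \eqref{eq:examples:setup} and the combination of \cref{ex:W:aI,ex:W:scalar}, which the paper leaves implicit.
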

\end{experiment}

\newlength\threefigwidth
\setlength\threefigwidth{0.275\textwidth}
\newlength\twofigwidth
\setlength\twofigwidth{0.435\textwidth}
\newlength\twofigheight
\setlength\twofigheight{\threefigwidth}

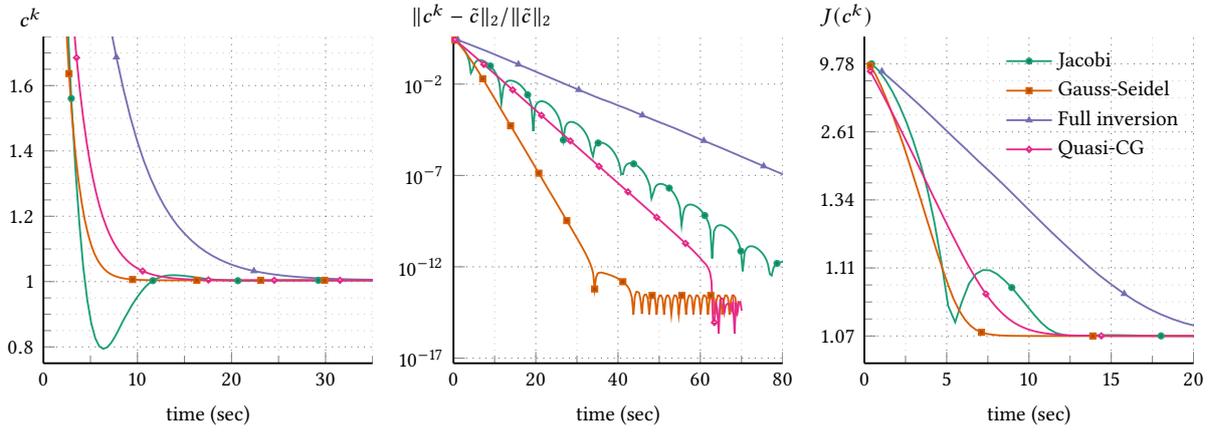
\begin{figure}[t]
        \begin{tikzpicture}
            \begin{axis}[
                width=\threefigwidth,
                height=\threefigwidth,
                convergence style,
                legend ne style,
                xmin=0, xmax=35,
                ymin=0.75, ymax=1.75,
                xlabel={time (sec)},
                ylabel={$c^k$},
            ]
                \addplot table[x=jac_t,y=jac_x]{data/case1a/data.txt};
                \addplot table[x=gs_t,y=gs_x]{data/case1a/data.txt};
                \addplot table[x=none_t,y=none_x]{data/case1a/data.txt};
                \addplot table[x=cg_t,y=cg_x]{data/case1a/data.txt};

            \end{axis}
        \end{tikzpicture}\hfill%
        \begin{tikzpicture}
            \begin{axis}[
                width=\threefigwidth,
                height=\threefigwidth,
                convergence style,
                legend ne style,
                ymode=log,
                xmin=0, xmax=80,
                ymin=0, ymax=4,
                xlabel={time (sec)},
                ylabel={$\norm{c^k-\tilde c}_2/\norm{\tilde c}_2$},
            ]
                \addplot table[x=jac_t,y=jac_relerr]{data/case1a/data.txt};
                \addplot table[x=gs_t,y=gs_relerr]{data/case1a/data.txt};
                \addplot table[x=none_t,y=none_relerr]{data/case1a/data.txt};
                \addplot table[x=cg_t,y=cg_relerr]{data/case1a/data.txt};

            \end{axis}
        \end{tikzpicture}\hfill%
        \begin{tikzpicture}
            \SetMinMax{data/case1a/data.txt}{jac_J}{\jacRes}%
            \UpdMinMax{data/case1a/data.txt}{gs_J}{\gsRes}%
            \UpdMinMax{data/case1a/data.txt}{none_J}{\nonRes}%
            \UpdMinMax{data/case1a/data.txt}{cg_J}{\cgRes}%
            \begin{axis}[
                width=\threefigwidth,
                height=\threefigwidth,
                convergence style,
                legend ne style,
                xmin=0, xmax=20,
                xlabel={time (sec)},
                ylabel={$J(c^k)$},
                legend pos = north east,
                fixedylog = {1000}{},
            ]
                \addplot table[x=jac_t,y=jac_J]{\jacRes};
                \addplot table[x=gs_t,y=gs_J]{\gsRes};
                \addplot table[x=none_t,y=none_J]{\nonRes};
                \addplot table[x=cg_t,y=cg_J]{\cgRes};

                \addlegendentry{Jacobi}
                \addlegendentry{Gauss-Seidel}
                \addlegendentry{Full inversion}
                \addlegendentry{Quasi-CG}

            \end{axis}
        \end{tikzpicture}%
    \caption{Performance of various splittings in the coarse grid \cref{num:example:1}.}
    \label{fig:case1a:experiments}
\end{figure}
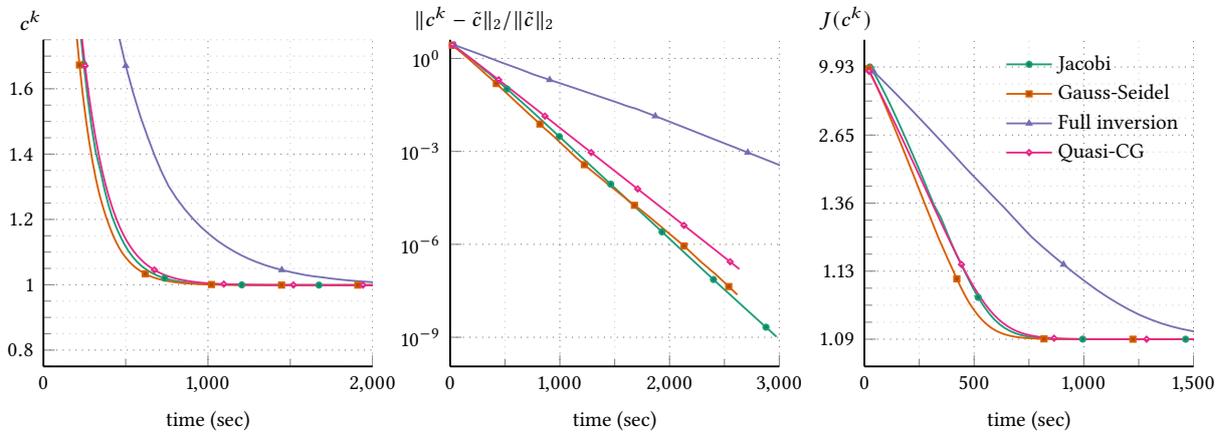
\begin{figure}[t]
        \begin{tikzpicture}
            \begin{axis}[
                width=\threefigwidth,
                height=\threefigwidth,
                convergence style,
                legend ne style,
                xmin=0, xmax=2000,
                ymin=0.75, ymax=1.75,
                xlabel={time (sec)},
                ylabel={$c^k$},
                xtick={0,1000,2000}
            ]
                \addplot table[x=jac_t,y=jac_x]{data/case1b/data.txt};
                \addplot table[x=gs_t,y=gs_x]{data/case1b/data.txt};
                \addplot table[x=none_t,y=none_x]{data/case1b/data.txt};
                \addplot table[x=cg_t,y=cg_x]{data/case1b/data.txt};

            \end{axis}
        \end{tikzpicture}\hfill%
        \begin{tikzpicture}
            \begin{axis}[
                width=\threefigwidth,
                height=\threefigwidth,
                convergence style,
                legend ne style,
                ymode=log,
                xmin=0, xmax=3000,
                ymin=0, ymax=4,
                xlabel={time (sec)},
                ylabel={$\norm{c^k-\tilde c}_2/\norm{\tilde c}_2$},
            ]
                \addplot table[x=jac_t,y=jac_relerr]{data/case1b/data.txt};
                \addplot table[x=gs_t,y=gs_relerr]{data/case1b/data.txt};
                \addplot table[x=none_t,y=none_relerr]{data/case1b/data.txt};
                \addplot table[x=cg_t,y=cg_relerr]{data/case1b/data.txt};

            \end{axis}
        \end{tikzpicture}\hfill%
        \begin{tikzpicture}
            \SetMinMax{data/case1b/data.txt}{jac_J}{\jacRes}%
            \UpdMinMax{data/case1b/data.txt}{gs_J}{\gsRes}%
            \UpdMinMax{data/case1b/data.txt}{none_J}{\nonRes}%
            \UpdMinMax{data/case1b/data.txt}{cg_J}{\cgRes}%
            \begin{axis}[
                width=\threefigwidth,
                height=\threefigwidth,
                convergence style,
                legend ne style,
                xmin=0, xmax=1500,
                xlabel={time (sec)},
                ylabel={$J(c^k)$},
                legend pos = north east,
                xtick={0,500,1000,1500},
                fixedylog = {1000}{},
            ]
                \addplot table[x=jac_t,y=jac_J]{\jacRes};
                \addplot table[x=gs_t,y=gs_J]{\gsRes};
                \addplot table[x=none_t,y=none_J]{\nonRes};
                \addplot table[x=cg_t,y=cg_J]{\cgRes};

                \addlegendentry{Jacobi}
                \addlegendentry{Gauss-Seidel}
                \addlegendentry{Full inversion}
                \addlegendentry{Quasi-CG}

            \end{axis}
        \end{tikzpicture}%
    \caption{Performance of various splittings in fine grid \cref{num:example:1}.}
    \label{fig:case1b:experiments}
\end{figure}

\begin{figure}[t]
        \begin{tikzpicture}
            \begin{axis}[
                width=\twofigwidth,
                height=\twofigheight,
                convergence style,
                legend ne style,
                ymode=log,
                xmin=0, xmax=1000,
                xlabel={time (sec)},
                ylabel={$\norm{(\tilde c/c^k)a^k - \tilde a}_{L^2} / \norm{\tilde a}_{L^2} $}
            ]
                \addplot table[x=jac_t,y=jac_relerr]{data/case4a/data.txt};
                \addplot table[x=gs_t,y=gs_relerr]{data/case4a/data.txt};
                \addplot table[x=none_t,y=none_relerr]{data/case4a/data.txt};
                \addplot table[x=cg_t,y=cg_relerr]{data/case4a/data.txt};

            \end{axis}
        \end{tikzpicture}\hfill%
        \begin{tikzpicture}
            \SetMinMax{data/case4a/data.txt}{jac_J}{\jacRes}%
            \UpdMinMax{data/case4a/data.txt}{gs_J}{\gsRes}%
            \UpdMinMax{data/case4a/data.txt}{none_J}{\nonRes}%
            \UpdMinMax{data/case4a/data.txt}{cg_J}{\cgRes}%
            \begin{axis}[
                width=\twofigwidth,
                height=\twofigheight,
                convergence style,
                legend ne style,
                xmin=0, xmax=1000,
                xlabel={time (sec)},
                ylabel={$J(a^k,c^k)$},
                fixedylog = {1000}{},
            ]
                \addplot table[x=jac_t,y=jac_J]{\jacRes};
                \addplot table[x=gs_t,y=gs_J]{\gsRes};
                \addplot table[x=none_t,y=none_J]{\nonRes};
                \addplot table[x=cg_t,y=cg_J]{\cgRes};

                \addlegendentry{Jacobi}
                \addlegendentry{Gauss-Seidel}
                \addlegendentry{Full inversion}
                \addlegendentry{Quasi-CG}

            \end{axis}
        \end{tikzpicture}
    \caption{Performance of various splittings in the coarse grid \cref{num:example:2}.}
    \label{fig:case4a:experiments}
\end{figure}
\begin{figure}[t]
        \begin{tikzpicture}
            \begin{axis}[
                width=\twofigwidth,
                height=\twofigheight,
                convergence style,
                legend ne style,
                ymode=log,
                xmin=0, xmax=10000,
                xlabel={time (sec)},
                ylabel={$\norm{(\tilde c/c^k)a^k - \tilde a}_{L^2} / \norm{\tilde a}_{L^2} $}
            ]
                \addplot table[x=jac_t,y=jac_relerr]{data/case4b/data.txt};
                \addplot table[x=gs_t,y=gs_relerr]{data/case4b/data.txt};
                \addplot table[x=none_t,y=none_relerr]{data/case4b/data.txt};
                \addplot table[x=cg_t,y=cg_relerr]{data/case4b/data.txt};

            \end{axis}
        \end{tikzpicture}\hfill%
        \begin{tikzpicture}
            \SetMinMax{data/case4b/data.txt}{jac_J}{\jacRes}%
            \UpdMinMax{data/case4b/data.txt}{gs_J}{\gsRes}%
            \UpdMinMax{data/case4b/data.txt}{none_J}{\nonRes}%
            \UpdMinMax{data/case4b/data.txt}{cg_J}{\cgRes}%
            \begin{axis}[
                width=\twofigwidth,
                height=\twofigheight,
                convergence style,
                legend ne style,
                xmin=0, xmax=10000,
                xlabel={time (sec)},
                ylabel={$J(a^k,c^k)$},
                fixedylog = {1000}{},
            ]
                \addplot table[x=jac_t,y=jac_J]{\jacRes};
                \addplot table[x=gs_t,y=gs_J]{\gsRes};
                \addplot table[x=none_t,y=none_J]{\nonRes};
                \addplot table[x=cg_t,y=cg_J]{\cgRes};

                \addlegendentry{Jacobi}
                \addlegendentry{Gauss-Seidel}
                \addlegendentry{Full inversion}
                \addlegendentry{Quasi-CG}

            \end{axis}
        \end{tikzpicture}
    \caption{Performance of various splittings in the fine grid \cref{num:example:2}.}
    \label{fig:case4b:experiments}
\end{figure}
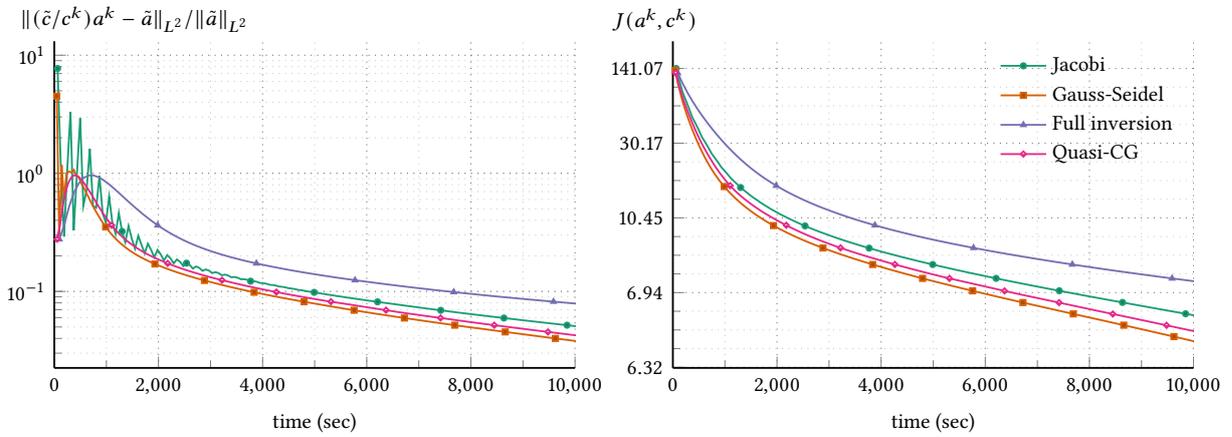
\begin{figure}
        \begin{tikzpicture}
            \begin{axis}[parameter style, title=Jacobi split]
                \addplot graphics [xmin=0,xmax=1,ymin=0,ymax=1] {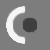};
            \end{axis}
        \end{tikzpicture}%
        \begin{tikzpicture}
            \begin{axis}[
                parameter style,
                yticklabels={,,},
                title=No split]
                \addplot graphics [xmin=0,xmax=1,ymin=0,ymax=1] {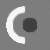};
            \end{axis}
        \end{tikzpicture}%
        \begin{tikzpicture}
            \begin{axis}[
                name = last,
                parameter style,
                yticklabels={,,},
                title=Data generation phantom]
                \addplot graphics [xmin=0,xmax=1,ymin=0,ymax=1] {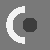};
            \end{axis}
            \begin{axis}[colorbar style, at = {(last.east)}]
                \addplot graphics [xmin=0,xmax=.1575,ymin=0,ymax=2.1] {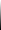};
            \end{axis}
        \end{tikzpicture}
    \caption{Illustrations of the coefficient reconstructions for \cref{num:example:2}A. On the left is the result of the Jacobi split approach, in the middle the full matrix inversion after the same number of iterations. On the right we show the data generation phantom for comparison.}
    \label{fig:case4a:experiments:recon}
\end{figure}
\begin{figure}
        \begin{tikzpicture}
            \begin{axis}[parameter style, title=Jacobi split]
                \addplot graphics [xmin=0,xmax=1,ymin=0,ymax=1] {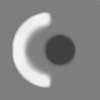};
            \end{axis}
        \end{tikzpicture}%
        \begin{tikzpicture}
            \begin{axis}[
                parameter style,
                yticklabels={,,},
                title=No split]
                \addplot graphics [xmin=0,xmax=1,ymin=0,ymax=1] {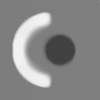};
            \end{axis}
        \end{tikzpicture}%
        \begin{tikzpicture}
            \begin{axis}[
                name = last,
                parameter style,
                yticklabels={,,},
                title=Data generation phantom]
                \addplot graphics [xmin=0,xmax=1,ymin=0,ymax=1] {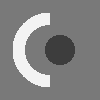};
            \end{axis}
            \begin{axis}[colorbar style, at = {(last.east)}]
                \addplot graphics [xmin=0,xmax=.1575,ymin=0,ymax=2.1] {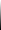};
            \end{axis}
        \end{tikzpicture}
    \caption{Illustrations of the coefficient reconstructions for \cref{num:example:2}B. On the left is the result of the Jacobi split approach, in the middle the full matrix inversion after the same number of iterations. On the right we show the data generation phantom for comparison.}
    \label{fig:case4b:experiments:recon}
\end{figure}

\subsection{Algorithm parametrisation}
\label{sec:numerics:parametrisation}

We apply \cref{alg:alg} with no splitting (full inversion), and with Jacobi and Gauss–Seidel splitting, and quasi conjugate gradients, as discussed in \cref{sec:examples:splittings}.
We fix $ \sigma = 1.0 $, $ \omega = 1.0 $, $ \lambda = 0.1 $, $ \varepsilon = 0.01 $, and $ \beta = 10^2 $ for all experiments. Other parameters, including the grid size, $ \alpha $, $ \gamma_i $, $ \tau $ and $ m $ vary according to experiment with values listed in \cref{tbl:parameters}.

For the initial iterate $ (x^0,u^0,w^0,y^0) $ we make an experiment-specific choice of the control parameter $x^0$. Then we determine $ u^0 $ by solving the PDE, and $ w_0 $ by solving the adjoint PDE. We set $ y^0 = Kx^0 $.
For \cref{num:example:1} we take the initial $ c^0 = 4.0 $ and run the algorithm for 20,000 iterations on the coarse grid and 125,000 on the fine.
For \cref{num:example:2} we take the initial $ a^0 \equiv 1.0$ a constant function, and $ c^0 = 2.0 $. The algorithm is run for 200,000 iterations on the coarse grid, and 500,000 on the fine.

\begin{table}
    \centering
    \caption{Parameter choices for all examples.} \label{tbl:cases:parameters}
    \begin{tabular}{c|c|c|c|c|c|c|c|c|c}
        Grid & $N$ & Grid size & $\alpha$ & $\beta$ & $\gamma$ & $\tau$ & $\sigma$ & $\omega$ & $m$ \\\hline
        Coarse & 51 & 2601 & $1\times 10^{-5}$ & $1\times 10^2$ & $0$ & $ 2.5\times 10^{-2} $ & $ 1 $ & $ 1 $ & $ 6 $ \\
        Fine & 101 & 10201 & $1\times 10^{-5}$ & $1\times 10^2$ & $0$ & $ 2.0\times 10^{-3} $ & $ 1 $ & $ 1 $ & $ 6 $ \\\hline
        Coarse & 51 & 2601 & $ 0 $ & $1\times 10^2$ & $ 10^{-2} $ & $ 2.5\times 10^{-2} $ & $ 1 $ & $ 1 $ & $ 10 $ \\
        Fine & 101 & 10201 & $ 0 $ & $1\times 10^2$ & $ 10^{-2} $ & $ 1\times 10^{-2} $ & $ 1 $ & $ 1 $ & $ 10 $
    \end{tabular}
    \label{tbl:parameters}
\end{table}

We implemented the algorithm in Julia.
The implementation is available on Zenodo \cite{jensen2022codes}.
The experiments were run on a ThinkPad laptop with Intel Core i5-8265U CPU at 1.60GHz $\times 4$ and 15.3 GiB memory.

\subsection{Results}

The results for \cref{num:example:1} with the above algorithm parametrisations are in \cref{fig:case1a:experiments} for the coarse grid and \cref{fig:case1b:experiments} for the fin grid. In the figures we illustrate the evolution of the coefficient $ c^k $ as the algorithm iterates. We also show the evolution of the relative error of the coefficient and the functional value.

The results for \cref{num:example:2} are available in \cref{fig:case4a:experiments,fig:case4a:experiments:recon} for the coarse grid and \cref{fig:case4b:experiments,fig:case4b:experiments:recon}  for the fine grid.
In \cref{fig:case4a:experiments,fig:case4b:experiments} are shown the evolution of the relative error of the coefficient and the functional value.
In \cref{fig:case4a:experiments:recon,fig:case4b:experiments:recon} are the reconstructed coefficients $ a^k $ at the final iterates and for comparison the phantom used for the data generation.

The performance plots have \emph{time} on the $x$-axis rather than the number of iterations, as the main difference between the splittings is expected to be in the computational effort for linear system solution, i.e., \cref{step:alg:pde,step:alg:adjoint-pde} of \cref{alg:alg}.
For fairness, we limited the number of threads used by Julia/OpenBLAS to one.

In all experiments the splittings outperform full matrix inversion: the best splittings require roughly \emph{half of the computational effort} for an iterate of the same quality.
No particular splitting completely dominates another, however, Jacobi appear to be more prone to overstepping and oscillatory patterns.
On the other hand, quasi-CG currently has no convergence theory, and we have observed situations where it does not exhibit convergence while Jacobi and Gauss–Seidel splittings do. Therefore, Gauss–Seidel is our recommended option.

\appendix

\section{Optimality conditions}
\label{sec:oc}

We prove here the necessity of \eqref{eq:algorithm:oc1} for solutions to \eqref{eq:algorithm:minmax}.

\begin{proof}[Proof of \cref{thm:algorithm:oc}]
    We let $T(x, u) \defeq B(u, \freevar; x)$, $T: X \times U \to W^*$.
    Setting
    \[
        A := \{ (x, u) \in X \times U \mid B(u, w; x) = Lw \text{ for all } w \in W\}
        = T^{-1}(L),
    \]
    any solution $(\optu, w, \optx, \opty)$ to \eqref{eq:algorithm:minmax} also solves
    \[
        \min_{x,u} R(x, u) \defeq [R_0 + \delta_A](x, u)
        \quad\text{where}\quad
        R_0(x, u) = F(x) + Q(u) + G(Kx).
    \]
    with $G(K\opt x)=\iprod{K\optx}{\opt y}_Y - G^*(\opt y)$.
    By the Fenchel-Young theorem, the latter is equivalent to the last line of \eqref{eq:algorithm:oc1}.
    Clearly $(\opt x, \opt u) \in A$, or else there is no solution. Therefore also the first line of \eqref{eq:algorithm:oc1} holds.

    It follows from the linearity/affinity and continuity, hence continuous differentiability of $B$ that $T$ is strictly differentiable.
    Since $T'(\optx, \optu)(h_x, h_u) = \linwop(\optu, \freevar; h_x) + \uop(h_u, \freevar; \optx)$, so that
    \[
        \dualprod{T'(\optx, \optu)^*w}{(h_x, h_u)} = \linwop(\optu, w; h_x) + \uop(h_u, w; \optx),
    \]
    the qualification condition \eqref{eq:algorithm:cq1} reads
    \[
        \sup_{\norm{(h_x, h_u)}=1} \norm{T'(\optx, \optu)^*(h_x, h_u)} \ge c \norm{w}
        \quad\text{for all}\quad w \in W.
    \]
    Moreover, as a bounded linear operator, $T'(\optx, \optu)$ is closed, i.e., has closed graph.
    Therefore, by \cite[Theorem 2.20]{brezis2011functional}, $T'(\optx, \optu)$ is surjective.
    With this, \cite[Theorem 1.17]{mordukhovich2006variational} gives
    \[
        \begin{aligned}
        \subdiff_M \delta_A(x, u)
        &
        = T'(\optx, \optu)^*N_{\{L\}}(T(\optx, \optu)),
        \\
        &
        = \{ (h_x, h_u) \mapsto \dualprod{T'(\optx, \optu)(h_x, h_u)}{w} \mid w \in W\}
        \\
        &
        = \{(h_x, h_u) \mapsto \linwop(\optu, w; h_x) + \uop(h_u, w; \optx) \mid w \in W\}.
        \end{aligned}
    \]
    Here we denote by $N_D(x)=\subdiff_M \delta_D(x)$ the limiting normal cone to a set $D$ at $x$.

    Since limiting subdifferentials agree with convex subdifferentials on convex functions, and we have assumed that $\interior \Dom R_0 \ne \emptyset$, we can easily calculate $\subdiff_M R_0$.
    We will then use the sum rule \cite[Theorem 3.36]{mordukhovich2006variational} to estimate $\subdiff_M R$, which requires verifying that $R_0$ is “sequentially normally epicompact” (SNEC), and that the “horizon subdifferentials”, defined for $V: X \to \extR$ as $\subdiff^\infty V(x) \defeq \{ x^* \in X^* \mid (x^*, 0) \in N_{\epi V}(x, V(x))\}$, satisfy
    \begin{equation}
        \label{eq:oc:horizon-cond}
        \subdiff^\infty \delta_A(\optx,\optu) \isect (-\subdiff^\infty R(\optx,\optu)) = \{0\}.
    \end{equation}
    Indeed, convex functions whose domains have a non-empty interior, such as $R_0$, are SNEC by \cite[Proposition 1.25 and discussion after Definition 1.116]{mordukhovich2006variational}.
    Moreover, since $\subdiff^\infty Q(\opt u)=\{0\}$, \eqref{eq:oc:horizon-cond} reduces to
    \[
        \uop(\freevar, w; \optx) = 0
        \implies
        \linwop(\optu, w; \freevar) \isect (- \subdiff^\infty[F + G \circ K](x)) = \{0\}
    \]
    This is guaranteed by the qualification condition \eqref{eq:algorithm:cq2}.
    Now, by the Fermat principle \cite[Proposition 1.114]{mordukhovich2006variational} and the sum rule \cite[Theorem 3.36]{mordukhovich2006variational}, we have
    \[
        0 \in \subdiff_M R(\opt x, \opt u) \subset
        \begin{pmatrix}
            \subdiff F(\opt x) + K^*\subdiff G(K\opt x)
            \\
            \{Q'(\opt u)\}
        \end{pmatrix}
        + \subdiff_M \delta_A(\opt x, \opt u).
    \]
    After appropriate Riesz representations, this inclusion expands as the middle two lines of \eqref{eq:algorithm:oc1}.
\end{proof}

\section{Localization}
\label{sec:localisation}

\Cref{thm:convergence:accel,thm:convergence:linear} are global convergence results, but also depend on the global constant $C_x$ in \cref{assump:convergence:structural}\,\Cref{item:convergence:structural:bound}. To satisfy the conditions of the theorems, $\Dom F$ may need to be small for $C_x$ to be small. We now develop local convergence results that allow replacing $C_x$ by a small initialization-dependent value without restricting $\Dom F$.

We replace  \cref{assump:convergence:structural} with the following:

\begin{assumption}
    \label{assump:local:convergence:structural}
    We assume \cref{assump:convergence:structural} to hold with \cref{item:convergence:structural:bound} replaced by
    \begin{enumerate}[start=4,label=(\roman*$'$)]
        \item\label{item:local:convergence:structural:bound}
        For some $\tilde C_x \ge 0$, for all $(u, w) \in U \times W$ and $x \in \Dom F$ we have the bound
        \[
            \linwop(u, w; x-\opt x)
            \le \tilde C_x \norm{x-\optx}_X\norm{u}_U\norm{w}_W.
        \]
    \end{enumerate}
\end{assumption}

This estimate uses the standard norm in $X$, which is a $2$-norm in the examples of \cref{sec:examples:W,sec:numerics}. However, \cref{sec:examples:W} gives estimates involving an $\infty$-norm for $C_x$. Therefore some finite-dimensionality of the parameters is required to satisfy \cref{assump:local:convergence:structural}\,\cref{item:local:convergence:structural:bound}.
This can take the form of a finite element discretisation of a function parameter $a$, or the parameter being a scalar constant. In the latter case, the examples of \cref{sec:examples:W} readily verify \cref{assump:local:convergence:structural}.

We then modify several previous results accordingly:

\begin{lemma}[Local version of \cref{lem:ineq-Hk-norm-1}]
    \label{lem:local:ineq-Hk-norm-1}
    Let $k \in \N$. Suppose \cref{assump:convergence:testing,assump:local:convergence:structural,assump:convergence:splitting} hold,
    \begin{equation}
        \label{eq:local:ineq-Hk-norm-1:apriori-bound}
        \norm{\nextu-\optu}_U \le \delta_{uw},
        \quad\text{and}\quad
        \norm{\nextw-\optw}_U \le \delta_{uw},
    \end{equation}
    for some $\delta_{uw} > 0$, and for some $\epsilon_u,\epsilon_w,\mu > 0$ that
    \begin{subequations}
        \label{eq:local:convergence:balance0}
        \begin{align}
            \label{eq:local:convergence:balance0:gammaf}
            \gamma_F
            &
            \ge
            \tilde\gamma_F
            + \epsilon_u + \epsilon_w
            + \frac{\lambda_{k+1}\pi_u + \theta_{k+1}\pi_w}{\eta_k}
            \\
            \label{eq:local:convergence:balance0:gammag}
            \gamma_{G^*}
            &
            \ge
            \tilde\gamma_{G^*},
            \\
            \label{eq:local:convergence:balance0:gammab}
            \gamma_B
            &
            \ge
            \frac{\lambda_{k+1}}{\lambda_k}
            + \frac{\theta_k}{\lambda_k}C_Q
            + \frac{\eta_k\SupNorm(\optw)}{2\epsilon_w\lambda_k}
            + \frac{\tilde C_x^2 \delta_{uw}^2 \mu \eta_k}{4\epsilon_u\lambda_k},
            \quad\text{and}
            \\
            \label{eq:local:convergence:balance0:gammab2}
            \gamma_B
            &
            \ge
            \frac{\theta_{k+1}}{\theta_k}
            + \frac{\eta_k\SupNorm(\opt u)}{2\epsilon_u\theta_k}
            + \frac{\tilde C_x^2 \delta_{uw}^2 \eta_k}{4 \epsilon_w \mu \theta_k}.
        \end{align}
    \end{subequations}
    Then \eqref{eq:ineq-Hk-norm-1} holds.
\end{lemma}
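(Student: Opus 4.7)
My plan is to mirror the proof of \cref{lem:ineq-Hk-norm-1} line by line, modifying only the bound on the trilinear cross-term $\xuop(u-\optu, w-\optw; x-\optx)$. The rest (strong monotonicity of $F$ and $G^*$, the splitting inequalities of \cref{assump:convergence:splitting}\,\cref{item:convergence:splitting:principal}, the explicit form \eqref{eq:algorithm:stepk}--\eqref{eq:algorithm:mk} of $Z_k\Step_k H_k$, and the bilinear identity $\iprod{\xop(u,w)-\xop(\optu,\optw)}{x-\optx} = \xuop(u-\optu,w-\optw;x-\optx)+\linwop(\optu,w-\optw;x-\optx)+\xuop(u-\optu,\optw;x-\optx)$) carry over without any change from the original argument.

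The two single-$\opt{}$ terms $\linwop(\optu,w-\optw;x-\optx)$ and $\xuop(u-\optu,\optw;x-\optx)$ I would bound exactly as before using \cref{assump:convergence:structural}\,\cref{item:convergence:structural:bound-sup}. Since the local balance conditions \eqref{eq:local:convergence:balance0:gammab}, \eqref{eq:local:convergence:balance0:gammab2} allot $\SupNorm(\optu)/(2\epsilon_u)$ and $\SupNorm(\optw)/(2\epsilon_w)$ (as opposed to $1/(4\epsilon)$ in the original), I would apply Young's inequality in the half-budget form $ab \le a^2/(2\epsilon) + \epsilon b^2/2$, which exhausts only $\epsilon_u/2 + \epsilon_w/2$ of the $\norm{x-\optx}^2$ budget and leaves the remaining $\epsilon_u/2 + \epsilon_w/2$ free to absorb the trilinear contribution.

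For the trilinear term I would invoke the new \cref{assump:local:convergence:structural}\,\cref{item:local:convergence:structural:bound} to write $\xuop(u-\optu,w-\optw;x-\optx) \le \tilde C_x\norm{x-\optx}_X\norm{u-\optu}_U\norm{w-\optw}_W$, then appeal to the a priori bound \eqref{eq:local:ineq-Hk-norm-1:apriori-bound}. Splitting the term symmetrically into two halves, applying $\norm{\nxt w-\optw}_W\le \delta_{uw}$ to one half and $\norm{\nxt u-\optu}_U\le \delta_{uw}$ to the other, reduces it to two bilinear-in-$\norm{x-\optx}$ products. A Young's inequality with the same parameter $\mu$ used to split $C_x\norm{u-\optu}\norm{w-\optw}$ in the original proof then produces the coefficients $\tilde C_x^2\delta_{uw}^2\mu/(4\epsilon_u)$ on $\norm{u-\optu}^2$ and $\tilde C_x^2\delta_{uw}^2/(4\epsilon_w\mu)$ on $\norm{w-\optw}^2$, while contributing the promised $\epsilon_u/2 + \epsilon_w/2$ to $\norm{x-\optx}^2$.

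Combining everything and invoking the balance conditions \eqref{eq:local:convergence:balance0} as in the original proof then absorbs every extraneous term into $\lambda_{k+1}\norm{\nxt u-\optu}_U^2$, $\theta_{k+1}\norm{\nxt w-\optw}_W^2$, $(\lambda_{k+1}\pi_u+\theta_{k+1}\pi_w)\norm{\nxt x-\optx}_X^2$, and $\tfrac12\norm{\nxt v-\optv}_{Z_k\Xi_k}^2$ of \eqref{eq:ineq-Hk-norm-1}. The main obstacle is the simultaneous matching of both balance conditions \cref{eq:local:convergence:balance0:gammab,eq:local:convergence:balance0:gammab2} from a single Young's split, which is precisely what forces the half-budget Young's on the $\SupNorm$ terms; this trade-off is visible as the factor-of-two weakening of the $\SupNorm$ coefficients relative to \cref{lem:ineq-Hk-norm-1}.
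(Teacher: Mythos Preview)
Your plan is essentially the same as the paper's: both arguments carry the proof of \cref{lem:ineq-Hk-norm-1} over unchanged except for the trilinear term, bound that term via \cref{assump:local:convergence:structural}\,\cref{item:local:convergence:structural:bound} and the a~priori estimates \eqref{eq:local:ineq-Hk-norm-1:apriori-bound}, and pay for the extra $\norm{x-\optx}_X$ factor with an additional Young's split---which is exactly why the $\SupNorm$ coefficients double from $1/(4\epsilon)$ to $1/(2\epsilon)$.

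There is one bookkeeping point where your sketch is loose and the paper's route is cleaner. The paper does not redo the Young's splits from scratch; it literally reuses \eqref{eq:ineq-Hk-norm-1:final:2} with $C_x$ replaced by $\tilde C_x\norm{x-\optx}_X$ and \emph{temporary} parameters $\tilde\epsilon_u,\tilde\epsilon_w,\tilde\mu$, then applies a second Young's round (together with \eqref{eq:local:ineq-Hk-norm-1:apriori-bound}) to the two mixed terms $\tfrac{\tilde\mu}{2}\tilde C_x\norm{x-\optx}\norm{w-\optw}^2$ and $\tfrac{1}{2\tilde\mu}\tilde C_x\norm{x-\optx}\norm{u-\optu}^2$, each with weight $\tilde\epsilon_u$ (resp.\ $\tilde\epsilon_w$), and finally substitutes $\epsilon_u=2\tilde\epsilon_u$, $\epsilon_w=2\tilde\epsilon_w$, $\mu=\tilde\mu^2$. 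The point of this two-stage scheme is that the total $\norm{x-\optx}^2$ cost comes out as $2(\tilde\epsilon_u+\tilde\epsilon_w)=\epsilon_u+\epsilon_w$ \emph{independently of $\mu$}. By contrast, if you follow your recipe literally---halve the trilinear term, apply \eqref{eq:local:ineq-Hk-norm-1:apriori-bound} to one factor in each half, and then choose Young's weights to force the target coefficients $\tilde C_x^2\delta_{uw}^2\mu/(4\epsilon_u)$ and $\tilde C_x^2\delta_{uw}^2/(4\epsilon_w\mu)$---the resulting $\norm{x-\optx}^2$ cost is $\epsilon_u/(4\mu)+\epsilon_w\mu/4$, not the $\epsilon_u/2+\epsilon_w/2$ you claim, and this can exceed the budget in \eqref{eq:local:convergence:balance0:gammaf} for large or small $\mu$. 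This is not a fatal gap: adopting the paper's intermediate-parameter trick (or equivalently, applying the $\mu$-split \emph{before} the a~priori bound rather than after) fixes it immediately.
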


\begin{proof}
    We follow the proof of \cref{lem:ineq-Hk-norm-1} until the estimate \eqref{eq:ineq-Hk-norm-1:final:2}, which now holds with $C_x=\tilde C_x\norm{x-\optx}_X$ and any  $\tilde\epsilon_u,\tilde\epsilon_w,\tilde\mu>0$ standing for $\epsilon_u,\epsilon_w,\mu>0$.
    Recall that we abbreviate $u=\nextu$, $w=\nextw$, and $x=\nextx$.
    Using Young's inequality and \eqref{eq:local:ineq-Hk-norm-1:apriori-bound}, we continue from there estimating that
    \begin{equation}
        \nonumber
        \begin{aligned}[t]
        \eta_k&\iprod{\xop(u, w)-\xop(\optu, \optw)}{x-\opt x}
        \\
        &
        \ge
        - \eta_k\left(\frac{\SupNorm(\optu)}{4\tilde\epsilon_u}+\frac{\tilde C_x \norm{x-\optx} \tilde\mu }{2}\right)\norm{w-\optw}_W^2
        - \eta_k\left(\frac{\SupNorm(\optw)}{4\tilde\epsilon_w}+\frac{\tilde C_x \norm{x-\optx}}{2 \tilde\mu }\right)\norm{u - \opt u}_U^2
        \\
        \MoveEqLeft[-1]
        - \eta_k(\tilde\epsilon_u+\tilde\epsilon_w)\norm{x - \opt x}_X^2
        \\
        &
        \ge
        - \eta_k\left(
            \frac{\SupNorm(\optu)}{4\tilde\epsilon_u}
            + \frac{\tilde C_x^2 \delta_{uw}^2 \tilde\mu^2 }{8 \tilde\epsilon_u}
        \right)
        \norm{w-\optw}_W^2
        - \eta_k\left(
            \frac{\SupNorm(\optw)}{4\tilde\epsilon_w}
            + \frac{\tilde C_x^2 \delta_{uw}^2 }{8 \tilde\mu^2 \tilde\epsilon_w }
        \right)
        \norm{u - \opt u}_U^2
        \\
        \MoveEqLeft[-1]
        - \eta_k(2\tilde\epsilon_u+2\tilde\epsilon_w)\norm{x - \opt x}_X^2.
        \end{aligned}
    \end{equation}
   With $\epsilon_u = 2\tilde\epsilon_u$, $\epsilon_u=2\tilde\epsilon_w$, and $\mu=\tilde\mu^2$, we now continue with the proof of \cref{lemma:convergence:main-estimate}, which goes through with \eqref{eq:local:convergence:balance0} in place of \eqref{eq:convergence:balance0}.
\end{proof}

\begin{lemma}[Local version of \cref{lemma:convergence:simplify-assumption}]
    \label{lemma:local:convergence:simplify-assumption}
    Suppose $\gamma_F > \tilde\gamma_F > 0$ as well as $\gamma_{G^*} \ge \tilde\gamma_{G^*} \ge 0$ and that there exist $\omega, t > 0 $ with $\omega\eta_{k+1} \le \eta_k$ for all $k \in \N$ such that
    \begin{equation}
        \label{eq:local:convergence:balance}
        \gamma_B
        \ge
        \inv\omega
        + tC_Q
        + \frac{4(1 + \inv t)}{\omega(\gamma_F-\tilde\gamma_F)^2}
        \left(
            \SupNorm(\optu)\pi_w
            + t\SupNorm(\optw)\pi_u
            + \frac{1}{4}\sqrt{t\pi_w\pi_u}\tilde C_x^2(\gamma_F-\tilde\gamma_F)\delta_{uw}^2
        \right).
    \end{equation}%
    Then there exist $\epsilon_u,\epsilon_w,\mu>0$, and, for all $k \in \N$, $\lambda_k,\theta_k>0$, such that \eqref{eq:local:convergence:balance0} holds.
\end{lemma}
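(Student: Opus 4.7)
The plan is to closely mirror the proof of \cref{lemma:convergence:simplify-assumption}. Specifically, I would take the same definitions \eqref{eq:convergence:balance:lambdathetadef} for $\lambda_k$, $\theta_k$, $r$, and $c_k$, the same $\mu \defeq (t\pi_u/\pi_w)^{-1/2}$, and the same $\epsilon_u, \epsilon_w$ proportional to $\SupNorm(\optu)$ and $t\SupNorm(\optw)$ with $\epsilon_u + \epsilon_w = (\gamma_F-\tilde\gamma_F)/2$. The same computation as in \cref{lemma:convergence:simplify-assumption} then verifies \eqref{eq:local:convergence:balance0:gammaf} (whose form is unchanged from its non-local counterpart); \eqref{eq:local:convergence:balance0:gammag} is assumed directly.

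The substantive work lies in checking \eqref{eq:local:convergence:balance0:gammab} and \eqref{eq:local:convergence:balance0:gammab2}. Substituting the definitions, the $\SupNorm$-type terms reduce exactly as in \cref{lemma:convergence:simplify-assumption}, but with an extra factor of two due to the coefficients $1/(2\epsilon)$ in \eqref{eq:local:convergence:balance0} versus $1/(4\epsilon)$ in \eqref{eq:convergence:balance0}; this accounts for the doubled prefactor $4(1+\inv t)/(\omega(\gamma_F-\tilde\gamma_F)^2)$ in \eqref{eq:local:convergence:balance}. The new $\tilde C_x^2\delta_{uw}^2$ contributions take the symmetric form $\tilde C_x^2\delta_{uw}^2\sqrt{t\pi_u\pi_w}/(4\epsilon r)$ in both balances via the identity $\mu t\pi_u = \pi_w/\mu = \sqrt{t\pi_u\pi_w}$, and are absorbed into the $\frac{1}{4}\sqrt{t\pi_u\pi_w}\tilde C_x^2(\gamma_F-\tilde\gamma_F)\delta_{uw}^2$ summand of \eqref{eq:local:convergence:balance}. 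The final step uses $\omega c_k \le 1$ to bound $c_k \le 1/\omega$, matching the $1/\omega$ term of \eqref{eq:local:convergence:balance}.

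The main obstacle is correctly tracking the constants, especially the interaction between the $\epsilon_u, \epsilon_w$ multiplying the $\SupNorm$-terms and those multiplying the new $\tilde C_x^2\delta_{uw}^2$-terms: the pairings in \eqref{eq:local:convergence:balance0:gammab} and \eqref{eq:local:convergence:balance0:gammab2} are swapped relative to each other (the $\SupNorm$-term in a given balance uses one of $\epsilon_u, \epsilon_w$ while the $\tilde C_x^2\delta_{uw}^2$-term uses the other). The proportional choice of $\epsilon_u, \epsilon_w$ inherited from the non-local proof nonetheless suffices, because the $\tilde C_x^2\delta_{uw}^2$-terms end up with identical expressions in the two balances thanks to the $\mu$-balance above, and are then both controlled by the common $\tilde C_x^2\delta_{uw}^2$-summand in \eqref{eq:local:convergence:balance}.
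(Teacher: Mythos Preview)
Your approach mirrors the paper's, which is itself a brief sketch: follow the proof of \cref{lemma:convergence:simplify-assumption} with $C_x$ replaced by $\tilde C_x^2\delta_{uw}^2$, noting the extra factor $2$ on the $\epsilon_u,\epsilon_w$-terms in \eqref{eq:local:convergence:balance0:gammab}--\eqref{eq:local:convergence:balance0:gammab2}, which produces the $4$ in \eqref{eq:local:convergence:balance}. Your more explicit treatment of the swapped $\epsilon_u/\epsilon_w$ pairing and the identity $\mu t\pi_u=\pi_w/\mu=\sqrt{t\pi_u\pi_w}$ goes beyond what the paper spells out, but the underlying argument is the same.
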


\begin{proof}
    In the proof of \cref{lemma:convergence:simplify-assumption}, we replace $C_x$ by $\tilde C_x^2 \delta_{uw}^2$, and use \eqref{eq:local:convergence:balance0} in place of \eqref{eq:convergence:balance0} and \eqref{eq:local:convergence:balance} in place of \eqref{eq:convergence:balance}. Observe that compared to \eqref{eq:convergence:balance0:gammab} and \eqref{eq:convergence:balance0:gammab2}, \eqref{eq:local:convergence:balance0:gammab} and  \eqref{eq:local:convergence:balance0:gammab2} have an additional factor $2$ in front of the terms involving $\epsilon_u$ and $\epsilon_w$. This difference produces  the constant factors 4 instead of 2 in \eqref{eq:local:convergence:balance} compared to \eqref{eq:convergence:balance}.
\end{proof}

\begin{lemma}[Local version of \cref{lemma:convergence:main-estimate}]
    \label{lemma:local:convergence:main-estimate}
    Suppose \cref{assump:local:convergence:structural,assump:convergence:testing} hold as do \cref{assump:convergence:splitting} and \eqref{eq:local:convergence:balance0} for $k=0,\ldots,N-1$ with
    \begin{equation}
        \label{eq:local:convergence:deltuw}
        \delta_{uw}^2
        =
        \frac{1}{\gamma_B}
        \max\left\{
            \frac{1}{\lambda_0},
            \frac{C_Q\inv\gamma_B}{\lambda_0},
            \frac{1}{\theta_0},
            \frac{1+C_Q\inv\gamma_B}{\lambda_0+\theta_0}
        \right\}
        \RealXStep^2
    \end{equation}
    and
    \begin{equation}
        \label{eq:local:convergence:realxstep}
        \RealXStep \defeq \norm{v^0 - \opt v}_{Z_0\tilde M_0}.
    \end{equation}
    Also suppose $\{\lambda_k\}_{k \in \N}$ and $\{\theta_k\}_{k \in \N}$ are non-decreasing.
    Given $v^0$, let $v^1,\ldots,v^{N-1}$ be produced by \cref{alg:alg}.
    Then  \eqref{eq:convergence:quantitative-fejer} holds for $k=0,\ldots,N-1$, where all the terms are non-negative.
\end{lemma}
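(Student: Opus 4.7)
The plan is induction on $k$ with the hypothesis that $\norm{v^j - \opt v}_{Z_j \tilde M_j}^2 \le \RealXStep^2$ for all $j \le k$; the base case $j=0$ holds by the definition \eqref{eq:local:convergence:realxstep} of $\RealXStep$. For the inductive step, the first task is to verify the a priori bound \eqref{eq:local:ineq-Hk-norm-1:apriori-bound} so that \cref{lem:local:ineq-Hk-norm-1} applies. Once that is in hand, the derivation of \eqref{eq:convergence:quantitative-fejer} proceeds exactly as in the proof of \cref{lemma:convergence:main-estimate}: apply \cref{lem:local:ineq-Hk-norm-1} to get the growth estimate \eqref{eq:ineq-Hk-norm-1}, use $-Z_k M_k(\nxt v - \this v) \in Z_k \Step_k H_k(\nxt v)$ from the implicit form \eqref{eq:algorithm:implicit}, apply the three-point identity \eqref{eq:pythagoras} for $Z_k M_k$, and absorb the skew-symmetric $D_{k+1}$ via \eqref{eq:convergence:z-xi-def}.

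To verify \eqref{eq:local:ineq-Hk-norm-1:apriori-bound} at index $k$, I would first extend \cref{lemma:convergence:zm-lower-bound} to $Z_k \tilde M_k$ by the same Young's inequality argument (with the extra $(\lambda_k \pi_u + \theta_k \pi_w)\norm{x}^2$ term carried through), yielding
\[
    Z_k \tilde M_k \ge \diag\begin{pmatrix}\lambda_k \Inj_U & \theta_k \Inj_W & (\phi_k(1-\kappa) + \lambda_k \pi_u + \theta_k \pi_w)\Inj_X & \psi_{k+1}\epsilon\Inj_Y\end{pmatrix}.
\]
Combined with the inductive hypothesis, this gives
\[
    \lambda_k \norm{\this u - \opt u}_U^2 + \theta_k \norm{\this w - \opt w}_W^2 + (\lambda_k \pi_u + \theta_k \pi_w) \norm{\this x - \opt x}_X^2 \le \RealXStep^2.
\]
Chaining this bound with the two inequalities of \cref{assump:convergence:splitting}\,\cref{item:convergence:splitting:principal} then controls both $\norm{\nxt u - \opt u}_U^2$ and $\norm{\nxt w - \opt w}_W^2$ by $\delta_{uw}^2$ as chosen in \eqref{eq:local:convergence:deltuw}, after using the assumed monotonicity of $\{\lambda_k\}$ and $\{\theta_k\}$ to replace them by $\lambda_0,\theta_0$.

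Closing the induction is then immediate: \eqref{eq:convergence:quantitative-fejer} together with non-negativity of $Z_{k+1}M_{k+1}$ and $Z_k M_k$ (via \cref{lemma:convergence:zm-lower-bound}) yields $\norm{v^{k+1} - \opt v}_{Z_{k+1} \tilde M_{k+1}}^2 \le \norm{v^k - \opt v}_{Z_k \tilde M_k}^2 \le \RealXStep^2$, so the hypothesis propagates.

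I expect the main obstacle to lie in matching the four entries of the $\max$ in \eqref{eq:local:convergence:deltuw} exactly. The entries $1/\lambda_0$, $C_Q\inv\gamma_B/\lambda_0$, and $1/\theta_0$ fall out cleanly from bounding $\norm{\nxt u - \opt u}_U^2$ and $\norm{\nxt w - \opt w}_W^2$ separately, with and without the $C_Q$ cross term. The fourth entry $(1 + C_Q\inv\gamma_B)/(\lambda_0 + \theta_0)$ is strictly tighter than the naive sum of two one-sided bounds and should come from a jointly weighted estimate of the form $\theta_k\norm{\this w - \opt w}^2 + (C_Q/\gamma_B)\lambda_k\norm{\this u - \opt u}^2$ that exploits the pooled $x$-sensitivity weight $\lambda_k\pi_u + \theta_k\pi_w$. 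This bookkeeping is routine but needs attention to get the constants right.
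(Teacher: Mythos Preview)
Your proposal is correct and follows essentially the same approach as the paper: induction on the bound $\norm{v^k-\opt v}_{Z_k\tilde M_k}^2 \le \RealXStep^2$, verification of the a priori estimate \eqref{eq:local:ineq-Hk-norm-1:apriori-bound} via the splitting inequalities of \cref{assump:convergence:splitting}\,\cref{item:convergence:splitting:principal} combined with the block-diagonal lower bound on $Z_k\tilde M_k$, and then invoking \cref{lem:local:ineq-Hk-norm-1} exactly as \cref{lem:ineq-Hk-norm-1} is used in the proof of \cref{lemma:convergence:main-estimate}. The paper phrases the induction on \eqref{eq:local:ineq-Hk-norm-1:apriori-bound} itself and sums \eqref{eq:convergence:quantitative-fejer} from $0$ to $\ell-1$ rather than using the one-step monotonicity you state, but the two are equivalent. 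Your remark about the fourth entry of the $\max$ is slightly off in intuition: it does not come from a joint $u$--$w$ weighting but simply from bounding the $x$-term in the $w$-estimate via the pooled denominator $\phi_k(1-\kappa)+\lambda_k\pi_u+\theta_k\pi_w \ge (\lambda_0+\theta_0)\pi_w$ after dropping $\phi_k(1-\kappa)$; the bookkeeping is otherwise as you describe.
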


\begin{proof}
    We need to prove \eqref{eq:local:ineq-Hk-norm-1:apriori-bound} for all $k=0,\ldots,N-1$.
    The rest follows as in the proof of \cref{lemma:convergence:main-estimate}.

    \Cref{assump:convergence:splitting}\,\cref{item:convergence:splitting:principal} with \eqref{eq:convergence:zktildemk} and \cref{lemma:convergence:zm-lower-bound} establish for all $k=0,\ldots,N-1$ the \emph{a priori} bounds
    \begin{gather}
        \label{eq:local:apriori:u}
        \begin{aligned}[t]
        \norm{\nextu-\optu}_U^2
        &
        \le \frac{1}{\gamma_B} \left(\norm{\thisu-\optu}_U^2 + \pi_u\norm{\thisx-\optx}_X^2\right)
        \\
        &
        \le
        \frac{1}{\gamma_B}
        \max\left\{
            \frac{1}{\lambda_k},
            \frac{\pi_u}{\phi_k(1-\kappa)+(\lambda_k+\theta_k)\pi_u}
        \right\}
        \norm{\thisv-\opt v}_{Z_k\tilde M_k}^2
        \\
        &
        \le
        \frac{1}{\gamma_B}
        \max\left\{
            \frac{1}{\lambda_0},
            \frac{1}{\lambda_0+\theta_0}
        \right\}
        \norm{\thisv-\opt v}_{Z_k\tilde M_k}^2
        \\
        &
        \le
        \frac{\delta_{uw}^2}{\RealXStep^2}\norm{\thisv-\opt v}_{Z_k\tilde M_k}^2
        \end{aligned}
    \shortintertext{and}
        \label{eq:local:apriori:w}
        \begin{aligned}[t]
        \norm{\nextw-\optw}_W^2
        &
        \le \frac{1}{\gamma_B}\left(\norm{\thisw-\optw}_W^2 + C_Q \norm{\nextu-\optu}_U^2 + \pi_w\norm{\thisx-\optx}_X^2\right)
        \\
        &
        \le \frac{1}{\gamma_B}\left(
            \norm{\thisw-\optw}_W^2 + C_Q\inv\gamma_B\norm{\thisu-\optu}_U^2 +  (1 + C_Q \inv\gamma_B)\pi_w\norm{\thisx-\optx}_X^2
        \right)
        \\
        &
        \le
        \frac{1}{\gamma_B}
        \max\left\{
            \frac{1}{\theta_k},
            \frac{C_Q\inv\gamma_B}{\lambda_k},
            \frac{(1+C_Q\inv\gamma_B)\pi_w}{\phi_k(1-\kappa)+(\lambda_k+\theta_k)\pi_w}
        \right\}
        \norm{\thisv-\opt v}_{Z_k\tilde M_k}^2
        \\
        &
        \le
        \frac{1}{\gamma_B}
        \max\left\{
            \frac{1}{\theta_0},
            \frac{C_Q\inv\gamma_B}{\lambda_0},
            \frac{1+C_Q\inv\gamma_B)}{\lambda_0+\theta_0}
        \right\}
        \norm{\thisv-\opt v}_{Z_k\tilde M_k}^2
        \\
        &
        \le
        \frac{\delta_{uw}^2}{\RealXStep^2}\norm{\thisv-\opt v}_{Z_k\tilde M_k}^2.
        \end{aligned}
    \end{gather}
    In the final steps we have used the the assumptions that $\{\phi_k\}_{k \in \N}$ (by \cref{assump:convergence:testing}), $\{\lambda_k\}_{k \in \N}$, and $\{\theta_k\}_{k \in \N}$ are non-decreasing.

    We now use induction.
    By definition we have $\norm{v^0 - \opt v}_{Z_0\tilde M_0} \le \delta$.
    Hence \eqref{eq:local:apriori:u} and \eqref{eq:local:apriori:w}  verify \eqref{eq:local:ineq-Hk-norm-1:apriori-bound} for $k=0$.
    Suppose then that we have proved \eqref{eq:local:ineq-Hk-norm-1:apriori-bound} for $k=0,\ldots,\ell-1$. Then \eqref{eq:convergence:quantitative-fejer} holds $k=0,\ldots,\ell-1$ by following the proof of \cref{lemma:convergence:main-estimate}, replacing \cref{lem:ineq-Hk-norm-1} there in by the localized \cref{lem:local:ineq-Hk-norm-1}.
    Summing \eqref{eq:convergence:quantitative-fejer} over $k=0,\ldots,\ell-1$, we now obtain the \emph{a posteriori} bound
    \[
        \frac12
        \norm{v^\ell-\opt v}_{Z_{\ell}\tilde M_{\ell}}^2
        \leq
        \frac12\norm{v^0 - \opt v}_{Z_0\tilde M_0}^2
        =
        \frac{1}{2}\RealXStep^2.
    \]
    Now  \eqref{eq:local:apriori:u} and \eqref{eq:local:apriori:w} verify \eqref{eq:local:ineq-Hk-norm-1:apriori-bound} for $k=\ell$.
    Hence also \eqref{eq:convergence:quantitative-fejer} holds for $k=\ell$.
    As a result of the entire inductive argument, it holds for all $k=0‚\ldots,N-1$.
\end{proof}

With $\phi_0=1$ and the choices of
\[
    \lambda_0 \defeq \inv t r_0 \inv\pi_u \eta_0
    \quad\text{and}\quad
    \theta_0 \defeq r_0 \inv\pi_w \eta_0
    \quad\text{for}\quad
    r_0 \defeq \frac{\gamma_F-\tilde\gamma_F}{2(\inv t + 1)c_0}
\]
and $c_0 \defeq \eta_1/\eta_0 \le \inv\omega$ in the proof of \cref{lemma:local:convergence:simplify-assumption} (\cref{lemma:convergence:simplify-assumption}), we expand and estimate \eqref{eq:local:convergence:deltuw} as
\begin{equation}
    \label{eq:local:convergence:deltauw:expanded}
    \begin{aligned}[t]
    \delta_{uw}^2
    &
    =
    \frac{1}{\gamma_B}
    \max\left\{
        \frac{t\pi_u}{r_0\eta_0},
        \frac{tC_Q\pi_u}{r_0\eta_0\gamma_B},
        \frac{\pi_w}{r_0\eta_0},
        \frac{1+C_Q\inv\gamma_B}{(\inv t\inv\pi_u + \inv\pi_w)r_0\eta_0}
    \right\}
    \RealXStep^2
    \\
    &
    =
    \frac{t}{\gamma_Br_0\eta_0}
    \max\left\{
        \pi_u,
        \frac{C_Q\pi_u}{\gamma_B},
        \frac{\pi_w}{t},
        (1+C_Q\inv\gamma_B)\frac{\pi_u\pi_w}{\pi_w + t\pi_u}
    \right\}
    \RealXStep^2
    \\
    &
    \le
    \frac{2(1+t)}{\gamma_B\eta_0\omega(\gamma_F - \tilde\gamma_F)}
    \max\left\{
        \pi_u,
        \frac{C_Q\pi_u}{\gamma_B},
        \frac{\pi_w}{t},
        (1+C_Q\inv\gamma_B)\pi_u
    \right\}
    \RealXStep^2.
    \end{aligned}
\end{equation}
Hence \eqref{eq:local:convergence:balance} with $\delta_{uw}^2$ replaced by this upper estimate and $\phi_0=1$ (so that $\eta_0=\tau_0$) reads
\begin{subequations}
\label{eq:local:convergence:balance:estimated}
\begin{gather}
    \gamma_B
    \ge
    \inv\omega
    + tC_Q
    + \frac{4(1 + \inv t)}{\omega(\gamma_F-\tilde\gamma_F)^2}
    \left(\textstyle
        \SupNorm(\optu)\pi_w
        + t\SupNorm(\optw)\pi_u
        + \frac{\max\left\{
            \pi_u,
            \frac{C_Q\pi_u}{\gamma_B},
            \frac{\pi_w}{t},
            (1+C_Q\inv\gamma_B)\pi_u
        \right\}(1+t)\sqrt{t\pi_w\pi_u}}{2\gamma_B\tau_0\omega}
        \tilde C_x^2\RealXStep^2
    \right).
\shortintertext{where we recall that $t>0$ is a free balancing parameter, and}
    \RealXStep \defeq \norm{v^0 - \opt v}_{Z_0\tilde M_0}.
\end{gather}
\end{subequations}

We now immediately obtain local versions of the main results. By initializing close enough to a solution, i.e., with small $\delta$, we can possibly obtain convergence more often than from the global versions.

\begin{corollary}[Local accelerated convergence]
    In \cref{thm:convergence:accel}, replace \cref{assump:convergence:structural} by \cref{assump:local:convergence:structural} and \eqref{eq:convergence:balance-accel:2} by \eqref{eq:local:convergence:balance:estimated} with $\omega=\omega_0$.
    Then the claims continue to hold.
\end{corollary}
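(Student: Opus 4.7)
The plan is to mimic the proof of \cref{thm:convergence:accel} verbatim, except that every appeal to \cref{lem:ineq-Hk-norm-1}, \cref{lemma:convergence:simplify-assumption}, and \cref{lemma:convergence:main-estimate} is replaced by an appeal to its localised counterpart, namely \cref{lem:local:ineq-Hk-norm-1}, \cref{lemma:local:convergence:simplify-assumption}, and \cref{lemma:local:convergence:main-estimate}. The task therefore reduces to verifying that the hypotheses of the localised lemmas are met with the step length and testing parameter rules of \cref{thm:convergence:accel}.

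First I would reproduce, unchanged, the verification of \cref{assump:convergence:testing}: with $\phi_0=1$, $\psi_k\equiv \inv\sigma_0\tau_0$, and $\phi_{k+1}\defeq\phi_k/\omega_k^2$, exactly as in the proof of \cref{thm:convergence:accel}, one gets $\phi_k=\Omega(k^2)$ and
\[
    \frac{\eta_{k+1}}{\eta_k}=\sqrt{1+2\tilde\gamma_F\tau_k}\le \sqrt{1+2\tilde\gamma_F\tau_0}=\inv\omega_0,
\]
because $\tau_k=\tau_0\omega_0\cdots\omega_{k-1}$ is non-increasing. Consequently $\{\eta_k\}$ is non-decreasing, and therefore so are the sequences $\lambda_k\defeq \inv t r_0\inv\pi_u\eta_k$ and $\theta_k\defeq r_0\inv\pi_w\eta_k$ chosen in \cref{lemma:local:convergence:simplify-assumption}. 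This supplies the monotonicity hypothesis of \cref{lemma:local:convergence:main-estimate}. The condition $\omega\eta_{k+1}\le\eta_k$ in \cref{lemma:local:convergence:simplify-assumption} is then satisfied with $\omega=\omega_0$, which is why the corollary instantiates the balance condition with this value.

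Next I would use the chain of estimates leading from \eqref{eq:local:convergence:balance:estimated} through \eqref{eq:local:convergence:deltauw:expanded} to \eqref{eq:local:convergence:balance}. The assumed inequality \eqref{eq:local:convergence:balance:estimated} is by construction just a rearrangement of \eqref{eq:local:convergence:balance} with $\delta_{uw}^2$ replaced by the upper bound \eqref{eq:local:convergence:deltauw:expanded}; hence \cref{lemma:local:convergence:simplify-assumption} produces $\epsilon_u,\epsilon_w,\mu>0$ for which \eqref{eq:local:convergence:balance0} holds for every $k\in\N$. With the definition \eqref{eq:local:convergence:deltuw} of $\delta_{uw}$ in terms of the initialisation error $\RealXStep=\norm{v^0-\opt v}_{Z_0\tilde M_0}$, all hypotheses of \cref{lemma:local:convergence:main-estimate} are then satisfied.

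Finally I would apply \cref{lemma:local:convergence:main-estimate} to obtain the Féjer-type estimate \eqref{eq:convergence:quantitative-fejer} for every $k$, sum over $k=0,\ldots,N-1$ to get
\[
    \frac12\norm{v^N-\opt v}_{Z_N\tilde M_N}^2\le \frac12\norm{v^0-\opt v}_{Z_0\tilde M_0}^2,
\]
and then close the argument exactly as in \cref{thm:convergence:accel}: combine with the lower bound \eqref{eq:convergence:zktildemk-estim} on $Z_k\tilde M_k$ and with the $\Omega(k^2)$-growth of $\{\phi_k\}_{k\in\N}$ and of $\{\lambda_k\},\{\theta_k\}_{k\in\N}$ to conclude strong convergence of $\this x$, $\thisu$, $\thisw$ to $\optx$, $\optu$, $\optw$ at rate $O(1/N)$ after removing squares. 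The only non-routine step, and the one where care is needed, is bookkeeping the induction in \cref{lemma:local:convergence:main-estimate}: the \emph{a priori} bound \eqref{eq:local:ineq-Hk-norm-1:apriori-bound} used by \cref{lem:local:ineq-Hk-norm-1} at iteration $k$ is proved from the Féjer estimate at the previous iteration, which in turn rests on \eqref{eq:local:ineq-Hk-norm-1:apriori-bound} at iteration $k-1$. The calibration of $\delta_{uw}$ via \eqref{eq:local:convergence:deltuw} is precisely what closes this induction, and is thus the main technical point one must verify carries over unchanged from the global setting.
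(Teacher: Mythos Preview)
Your proposal is correct and follows exactly the approach the paper takes: the paper's proof of this corollary is a single sentence stating that the argument is identical to that of \cref{thm:convergence:accel} with \cref{lemma:local:convergence:main-estimate} used in place of \cref{lemma:convergence:main-estimate}. You have simply filled in the details of this substitution explicitly, including the verification that $\{\lambda_k\}$ and $\{\theta_k\}$ are non-decreasing (needed for \cref{lemma:local:convergence:main-estimate}) and the chain \eqref{eq:local:convergence:balance:estimated}$\Rightarrow$\eqref{eq:local:convergence:balance}$\Rightarrow$\eqref{eq:local:convergence:balance0}, all of which is accurate.
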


\begin{corollary}[Local linear convergence]
    In \cref{thm:convergence:linear}, replace \cref{assump:convergence:structural} by \cref{assump:local:convergence:structural} and \eqref{eq:convergence:balance-linear:2} and \eqref{eq:local:convergence:balance:estimated} with $\tau_0=\tau$.
    Then the claims continue to hold.
\end{corollary}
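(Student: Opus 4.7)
The plan is to mirror the proof of \cref{thm:convergence:linear}, substituting the localized auxiliary results developed in this appendix for their global counterparts. First I would invoke \cite{tuomov-proxtest,clasonvalkonen2020nonsmooth} exactly as in \cref{thm:convergence:linear} to verify \cref{assump:convergence:testing} with the same choices $\varphi_0=1$, $\psi_0=\inv\sigma\tau$, $\varphi_{k+1} \defeq \varphi_k/\omega$, $\psi_{k+1} \defeq \psi_k/\omega$, so that $\{\phi_k\}_{k \in \N}$ and $\{\psi_k\}_{k \in \N}$ grow exponentially and $\eta_{k+1} \le \inv\omega \eta_k$. The parameters $\lambda_k,\theta_k$ chosen as in the proof of \cref{lemma:local:convergence:simplify-assumption} (via \cref{lemma:convergence:simplify-assumption}) are then constant multiples of $\eta_k$, hence likewise grow exponentially and are, in particular, non-decreasing; this is the monotonicity hypothesis required by \cref{lemma:local:convergence:main-estimate}.

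Next I would verify that \eqref{eq:local:convergence:balance:estimated} with $\tau_0=\tau$ implies \eqref{eq:local:convergence:balance} with the value of $\delta_{uw}^2$ prescribed by \eqref{eq:local:convergence:deltuw}. This is exactly the computation leading to \eqref{eq:local:convergence:deltauw:expanded}: inserting the specific $\lambda_0,\theta_0,r_0$ and using $c_0 \le \inv\omega$ transforms the abstract bound \eqref{eq:local:convergence:deltuw} into the explicit form appearing inside the $\max$ in \eqref{eq:local:convergence:balance:estimated}. With that in hand, \cref{lemma:local:convergence:simplify-assumption} produces $\epsilon_u,\epsilon_w,\mu>0$ and the sequences $\lambda_k,\theta_k$ making \eqref{eq:local:convergence:balance0} hold for all $k \in \N$.

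With the testing parameters, monotonicity, and \eqref{eq:local:convergence:balance0} all secured, I would apply \cref{lemma:local:convergence:main-estimate} to get the Féjer-type estimate \eqref{eq:convergence:quantitative-fejer} for every $k$. Summing over $k=0,\ldots,N-1$ telescopes to
\[
    \tfrac12 \norm{v^N - \opt v}_{Z_N\tilde M_N}^2 \le \tfrac12 \norm{v^0 - \opt v}_{Z_0\tilde M_0}^2 = \tfrac12\RealXStep^2,
\]
exactly as in the proof of \cref{thm:convergence:accel}. The lower bound \eqref{eq:convergence:zktildemk-estim} on $Z_k\tilde M_k$ applies verbatim, and since $\{\phi_k\},\{\psi_k\},\{\lambda_k\},\{\theta_k\}$ all grow exponentially in $k$, we read off linear convergence of $\norm{\this x - \opt x}_X^2$, $\norm{\thisu - \optu}_U^2$, and $\norm{\thisw - \optw}_W^2$ to zero.

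The main obstacle is the bookkeeping in the second paragraph: one has to check that the $\max$ in \eqref{eq:local:convergence:balance:estimated} is indeed dominated by the estimate used to derive \eqref{eq:local:convergence:deltauw:expanded}, and that the inductive argument in \cref{lemma:local:convergence:main-estimate} (which requires $\{\lambda_k\},\{\theta_k\}$ non-decreasing to validate the \emph{a priori} bounds \eqref{eq:local:apriori:u}--\eqref{eq:local:apriori:w} at every step) is compatible with the parameter updates of \cref{thm:convergence:linear}. All other parts of the argument are structurally identical to the global proof.
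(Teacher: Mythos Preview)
Your proposal is correct and follows exactly the approach the paper takes: the paper's own proof is the single sentence ``Both proofs are exactly as the original proofs, using \cref{lemma:local:convergence:main-estimate} in place of \cref{lemma:convergence:main-estimate},'' and your sketch simply unpacks what that substitution entails (verifying \cref{assump:convergence:testing}, checking that the $\lambda_k,\theta_k$ from \cref{lemma:local:convergence:simplify-assumption} are non-decreasing so that \cref{lemma:local:convergence:main-estimate} applies, and reading off exponential growth of the testing parameters). The bookkeeping you flag in the second paragraph---that \eqref{eq:local:convergence:balance:estimated} implies \eqref{eq:local:convergence:balance} via the estimate \eqref{eq:local:convergence:deltauw:expanded}---is precisely how the paper sets up \eqref{eq:local:convergence:balance:estimated} just before stating the corollary, so no additional work is needed there.
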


Both proofs are exactly as the original proofs, using \cref{lemma:local:convergence:main-estimate} in place of \cref{lemma:convergence:main-estimate}.

\end{document}